\newcommand{\C}{\ensuremath{\mathbb{C}}}%
\renewcommand{\H}{\mathcal H}
\newcommand{\B}{\mathfrak{B}}
		\newcommand{\NN}{\mathbb{N}}	
		\newcommand{\RR}{\mathbb{R}}
		\newcommand{\CH}{\mathcal{H}}	
\newcommand{\CK}{K}         			
\newcommand{\CO}{\mathcal{O}}
\newcommand{\CU}{\mathcal{U}}
\DeclarePairedDelimiter\abs{\lvert}{\rvert}		
\DeclarePairedDelimiter\norm{\lVert}{\rVert}		
\DeclarePairedDelimiter\paren{(}{)}			
\DeclarePairedDelimiter\braces{\{}{\}}			
\newcommand{\bigpar}[1]{\paren[\big]{#1}}	\newcommand{\bigparen}[1]{\paren[\big]{#1}}
	\newcommand{\Bigparen}[1]{\paren[\Big]{#1}}
\newcommand{\tensor}{\otimes}
\newcommand{\supp}{{\rm Supp}}
\newcommand{\diam}{{\rm diam}}
\newcommand{\id}{{\rm id}}
\newcommand{\Ad}{{\rm Ad}}
\newcommand{\roestar}[1]{\C_{\rm Roe}[#1]}
\newcommand{\roecstar}[1]{C^*_{\rm Roe}(#1)}
\newcommand{\roecstarmax}[1]{C^*_{\rm Roe,max}(#1)}
\newcommand{\fpstar}[1]{\C_{\rm fp}[#1]}
\newcommand{\fpcstar}[1]{C^*_{\rm fp}(#1)}
\newcommand{\halfline}{\mathbb{R}_{\geq 1}}
\newcommand{\spanc}{{\rm Span}_\C}
\newcommand{\rtimesalg}{\rtimes_{\rm alg}}
\newcommand{\rtimesmax}{\rtimes_{\rm max}}
\definecolor{darkgreen}{rgb}{0.0, 0.5, 0.0}
\renewcommand\epsilon\varepsilon
\theoremstyle{plain}
\newtheorem{thmA}{Theorem}
\newtheorem{corA}[thmA]{Corollary}
\newtheorem{thm}{Theorem}[section]
\newtheorem{lemma}[thm]{Lemma}
\newtheorem{lem}[thm]{Lemma}
\newtheorem{prop}[thm]{Proposition}
\newtheorem{cor}[thm]{Corollary}
\newtheorem*{cor*}{Corollary}
\theoremstyle{definition}
\newtheorem{definition}[thm]{Definition}
\newtheorem{de}[thm]{Definition}
\newtheorem{rmk}[thm]{Remark}
\newtheorem{assumption}[thm]{Assumption}
\newtheorem*{notation}{Notation}
\numberwithin{equation}{section}
\author{Tim de Laat}
 \address{Tim de Laat
 \newline Department of Mathematics, Kiel University
 \newline Heinrich-Hecht-Platz 6, 24118 Kiel, Germany}
 \email{delaat@math.uni-kiel.de}
\author{Federico Vigolo}
\address{Federico Vigolo
\newline Mathematisches Institut, Georg-August Universit\"at G\"ottingen
\newline Bunsenstra\ss{}e 3, 37073 G\"ottingen, Germany}
\email{federico.vigolo@uni-goettingen.de}
\author{Jeroen Winkel}
 \address{JEROEN WINKEL,
 \newline Mathematical Institute, University of Münster,
 \newline 48149 Münster, Germany}
 \email{winkeljeroen@gmail.com}
\title{Dynamical propagation and Roe algebras of warped spaces}
\begin{document}

\begin{abstract}
Given a non-singular action $\Gamma \curvearrowright (X,\mu)$, we define the $*$-algebra $\mathbb C_{\rm fp}[\Gamma \curvearrowright X]$ of operators of finite dynamical propagation associated with this action. This assignment is completely canonical and depends only on the class of measures of $\mu$.
We prove that the algebraic crossed product $L^{\infty}X \rtimesalg \Gamma$ surjects onto $\mathbb C_{\rm fp}[\Gamma \curvearrowright X]$ and that this surjection is a $\ast$-isomorphism whenever the action is essentially free.
As a consequence, we canonically characterize ergodicity and strong ergodicity of the action in terms of structural properties of $\mathbb C_{\rm fp}[\Gamma \curvearrowright X]$ and its closure.
We also use these techniques to describe the Roe algebra of a warped space in terms of the Roe algebra of the (non-warped) space and the group action. We apply this result to Roe algebras of warped cones.
\end{abstract}

\maketitle

\section{INTRODUCTION} \label{sec:introduction}

The Roe algebra and the uniform Roe algebra of a metric space are $C^*$-algebras that encode coarse geometric features of the space. They essentially go back to \cite{RoeIndexI,RoeIndexII,RoeMAMS} and have been studied extensively in the fields of index theory, operator algebras, and coarse geometry. In a recent breakthrough, it was shown that in the (fundamentally important) setting of uniformly locally finite metric spaces, the uniform Roe algebra completely remembers the coarse equivalence class of the underlying space \cite[Theorem 1.2]{baudier2022uniform}. Partial results in this direction are also known for the (non-uniform) Roe algebra \cite{SW13a,li2020measured}. Very recent work of Mart\'\i nez and the second-named author \cite{martinez2023rigidity,martinez2023rigiditybounded} has completed the picture by proving a strong form of unconditional rigidity in the setting of locally compact (extended) metric spaces.

Elements of the Roe algebra can be approximated by locally compact operators with finite propagation. When considering the coarse geometry of group actions, it is natural to replace the notion of finite propagation by its dynamical analogue, i.e.~finite dynamical propagation, which goes back to \cite{li2020markovian}. 
In analogy with the wealth of results that hold for Roe algebras of metric spaces, it is reasonable to expect that the algebra of operators of finite dynamical propagation should contain a great deal of information about the large-scale behaviour of the dynamics of the group action.
The first aim of this article is to initiate the study of such algebras and to substantiate this expectation by showing that they provide a canonical operator-algebraic characterisation of strong ergodicity.

In what follows, let $\alpha\colon \Gamma\curvearrowright (X,\mu)$ be a non-singular action of a countable discrete group on a standard measure space; see Section \ref{ssec:actions} for precise definitions and conventions.

\begin{definition}
Let $T\in B(L^2X)$.
We say that $T$ has \emph{finite dynamical propagation} (or \emph{finite $\alpha$-propagation}) if there is a finite subset $S\subseteq\Gamma$ such that for every measurable set $A\subseteq X$ and $\eta\in L^2A$, the function $T\eta$ is supported on $S\cdot A\coloneqq \bigcup_{s\in S}s\cdot A$. We also say that \emph{$T$ is supported on $S$}.
\end{definition}

The set $\fpstar{\Gamma\curvearrowright X}$ of operators of finite dynamical propagation is a $\ast$\=/subalgebra of $B(L^2X)$. Its norm closure in $B(L^2X)$ is a $C^*$\=/algebra, which we denote by $\fpcstar{\Gamma\curvearrowright X}$. 
This $C^*$-algebra can be thought of as an analogue of a Roe algebra, where the geometric structure of $\Gamma\curvearrowright X$ is inherited from that of $\Gamma$ via the orbit structure of the dynamical system; see Remark~\ref{rmk:alpha_propagation}.

\begin{rmk}\label{rmk:algebras invariant of measureclass}
We observe that ---up to canonical isomorphism--- the algebra $\fpstar{\Gamma\curvearrowright X}$ only depends on the measure class of $\mu$. More precisely, if $\nu\sim\mu$ is an equivalent measure, then pointwise multiplication by $(\frac{d\mu}{d\nu})^{\frac 12}$ defines a unitary operator $U_{\mu,\nu}\colon L^2(X,\mu)\to L^2(X,\nu)$ and hence a natural $\ast$\=/isomorphism $\Ad(U_{\mu,\nu})\colon  B(L^2(X,\mu))\to B(L^2(X,\nu))$ mapping $T$ to $U_{\mu,\nu}TU_{\mu,\nu}^*$. 
This restricts to a $\ast$\=/isomorphism between $\fpstar{\Gamma\curvearrowright(X,\mu)}$ and $\fpstar{\Gamma\curvearrowright (X,\nu)}$ because pointwise multiplication by an a.e. positive function does not affect dynamical propagation. 
In turn, $\Ad(U_{\mu,\nu})$ must also restrict to an \hyphenation{i-so-mor-phism}isomorphism $\fpcstar{\Gamma\curvearrowright(X,\mu)}\cong \fpcstar{\Gamma\curvearrowright (X,\nu)}$ of the closures. This also justifies the choice of dropping the dependence on the measure from the notation.
\end{rmk}

Let $\pi_\alpha\colon\Gamma\to U(L^2X)$ be the Koopman representation associated with the action $\alpha\colon\Gamma\curvearrowright (X,\mu)$. For all $\gamma\in\Gamma$, the operator $\pi_\alpha(\gamma)$ has finite dynamical propagation. Together with the $\ast$\=/representation $L^\infty X\to B(L^2 X)$ by multiplication operators, this induces a natural $\ast$\=/homomorphism $\varPhi\colon L^\infty X\rtimesalg\Gamma\to \fpstar{\Gamma\curvearrowright X}$, where $L^\infty X\rtimesalg\Gamma$ denotes the algebraic crossed product (see Section~\ref{ssec:crossed products}).
Our first result concerns this comparison map and shows that in the most interesting cases, it is an isomorphism.

\begin{thmA}\label{thmA:finite propagation as crossed product}
The $\ast$\=/homomorphism $\varPhi\colon L^\infty X\rtimesalg\Gamma\to\fpstar{\Gamma\curvearrowright X}$ is \hyphenation{sur-jec-tive}surjective. If the action $\alpha \colon \Gamma \curvearrowright X$ is essentially free, then $\varPhi$ is a $\ast$\=/isomorphism.
\end{thmA}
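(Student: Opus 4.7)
The foundation of the argument is the elementary observation that an operator $T \in B(L^2 X)$ of dynamical propagation $\{e\}$ commutes with every projection $M_{\chi_A}$, hence belongs to the maximal abelian subalgebra $L^\infty X \subseteq B(L^2 X)$ and is therefore a multiplication operator. My plan is to use this to extract, from any $T \in \fpstar{\Gamma \curvearrowright X}$ of propagation $S$, components $f_s \in L^\infty X$ with $T = \sum_{s \in S} M_{f_s}\pi_\alpha(s) = \varPhi\bigl(\sum_s f_s\cdot u_s\bigr)$.

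For surjectivity, the local geometry is encoded by the \emph{collision pattern} $R_x := \{(s,t) \in S\times S : sx = tx\}$, which is a measurable equivalence relation on $S$ depending measurably on $x$. Because $S$ is finite, $R_x$ takes only finitely many values, giving a Borel partition $X = \bigsqcup_R X_R$. On each $X_R$ I would fix a set of representatives $S_R \subseteq S$ for $S/R$, so that $sx \neq tx$ for distinct $s,t \in S_R$ and $x \in X_R$. Applying a Kechris--Solecki--Todor\v{c}evi\'c-type Borel coloring to the bounded-degree Borel graph on $X_R$ with edges $\{(x, s^{-1}t\, x) : s \neq t \in S_R\}$ further subdivides $X_R = \bigsqcup_k Y_{R,k}$ so that for each $k$ the sets $\{sY_{R,k} : s \in S_R\}$ are pairwise disjoint. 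Setting $Y = Y_{R,k}$, the propagation hypothesis together with this disjointness forces $TM_{\chi_Y} = \sum_{s \in S_R} M_{\chi_{sY}} T M_{\chi_Y}$, and each operator $\pi_\alpha(s)^*M_{\chi_{sY}} TM_{\chi_Y}$ maps $L^2 Y$ into $L^2 Y$ with dynamical propagation $\{e\}$ within $Y$, hence equals $M_{g_{s,R,k}}$ for some $g_{s,R,k} \in L^\infty Y$ by the opening observation applied to $Y$. Assembling $f_s := \sum_{R,k\, :\, s \in S_R} \chi_{sY_{R,k}} \cdot (g_{s,R,k}\circ s^{-1}) \in L^\infty X$ and using $\pi_\alpha(s)M_h = M_{h\circ s^{-1}}\pi_\alpha(s)$ then verifies $T = \sum_s M_{f_s}\pi_\alpha(s)$, giving surjectivity.

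For injectivity under essential freeness, every $X_R$ with $R$ strictly larger than the diagonal is null, so the first partition collapses and the second yields a Borel decomposition $X = \bigsqcup_k Y_k$ with $\{sY_k : s \in S\}$ pairwise disjoint for each $k$. Supposing $\sum_s M_{f_s}\pi_\alpha(s) = 0$, applying this identity to $\chi_{Y_k}$ gives $\sum_s f_s\cdot\pi_\alpha(s)\chi_{Y_k} = 0$; the $s$th summand is supported in $sY_k$, and disjointness forces each to vanish. Since $\pi_\alpha(s)\chi_{Y_k}$ involves an almost-everywhere positive Radon--Nikodym factor on $sY_k$, this yields $f_s|_{sY_k} = 0$, and summing over $k$ (using $\bigsqcup_k sY_k = sX = X$) gives $f_s \equiv 0$ for every $s$.

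The main obstacle I foresee is the Borel-coloring step producing the partition $\{Y_{R,k}\}$ with the disjointness property: the underlying graph has bounded degree so a finite Borel coloring exists, but we work modulo null sets in a non-singular setting, and one must adapt the descriptive-set-theoretic statement and verify that the resulting $f_s$ are genuine $L^\infty$ functions independent of the selection choices of $S_R$ and of the ordering used in the coloring.
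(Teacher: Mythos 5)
Your proof is correct, but the surjectivity half follows a genuinely different route from the paper's. The paper extracts one coefficient at a time: an ultrafilter limit over the directed set of countable measurable partitions produces a complex measure whose Radon--Nikodym derivative is the ``diagonal part'' of $T$ (Lemma~\ref{lem:remove e from support}), and one then inducts on the size of $S$, conjugating by $\pi_\alpha(s)$ to move each $s\in S$ to the identity. You instead decompose $X$ first by the collision pattern $R_x$ and then by a colouring so that on each piece $Y$ the translates $\braces{sY}_{s\in S_R}$ are pairwise disjoint; this makes all coefficients visible simultaneously, each as an operator of trivial propagation and hence a multiplication operator by maximality of $L^\infty X$ in $B(L^2X)$. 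Your route avoids ultrafilters and the paper's reduction to probability measures, at the price of importing the partition/colouring machinery already into the surjectivity step, whereas the paper needs it (in the form of Lemma~\ref{lem:essentially-free-action-countable-cover}) only for injectivity. The obstacle you flag is not a real gap: since everything is modulo null sets, you do not need the full Kechris--Solecki--Todor\v{c}evi\'c theorem --- Lemma~\ref{lem:essentially-free-action-countable-cover}, applied on $X_R$ to the finitely many maps $x\mapsto t^{-1}s\cdot x$ with $s\neq t\in S_R$ (which are fixed-point free on $X_R$ by construction), followed by intersecting over the finitely many maps and disjointifying, already yields the required countable Borel partition into independent sets, and the resulting $f_s$ are well defined with $\norm{f_s}_\infty\leq\norm{T}$ because for fixed $s$ the supports $sY_{R,k}$ are pairwise disjoint. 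Your injectivity argument is essentially the paper's.
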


In the special case of $\Gamma$ acting on itself by left translation, Theorem \ref{thmA:finite propagation as crossed product} shows, after taking $C^*$-completions, that the algebra $\fpcstar{\Gamma\curvearrowright \Gamma}$ concides with the classical uniform Roe algebra $C^*_u(\vert \Gamma \vert) \cong \ell^{\infty}(\Gamma) \rtimes_r \Gamma$.

Using Theorem \ref{thmA:finite propagation as crossed product}, we can study the structure of $\fpstar{\Gamma\curvearrowright X}$ and its closure $\fpcstar{\Gamma\curvearrowright X}$. In particular, it is straightforward to observe that the commutant $\fpstar{\Gamma\curvearrowright X}'$ coincides with the space $(L^\infty X)^\Gamma$ of $\Gamma$\=/invariant bounded functions (Proposition~\ref{prop:commutant of fpstar}).

We now deduce the first relations between the dynamics of the action and operator-algebraic properties of $\fpstar{\Gamma\curvearrowright X}$. Recall that a $\ast$\=/subalgebra $A\leq B(L^2X)$ is irreducible if there are no non-trivial $A$-invariant closed subspaces of $L^2X$. Equivalently, $A$ is irreducible if and only if it has trivial commutant or, by von Neumann's bicommutant theorem, $A$ is dense in $B(L^2X)$ in the weak operator topology. Furthermore, recall that $\alpha \colon \Gamma \curvearrowright X$ is \emph{ergodic} if, up to measure zero, there are no nontrivial invariant measurable subsets of $X$ (equivalently, the only $\Gamma$-invariant functions are the constant ones). As a consequence of Theorem \ref{thmA:finite propagation as crossed product}, we recover the following classical fact, which essentially goes back to \cite[Lemma 12.2.4]{murrayvonneumann1936}.

\begin{corA} \label{corA:ergodicity}
 The following are equivalent:
 \begin{enumerate}
  \item $\Gamma\curvearrowright X$ is ergodic;
  \item $\fpstar{\Gamma\curvearrowright X}'=\C$;
  \item $\fpstar{\Gamma\curvearrowright X}$ is dense in $ B(L^2 X)$ in the weak operator topology;
  \item $\fpstar{\Gamma\curvearrowright X}< B(L^2 X)$ is irreducible.
 \end{enumerate}
The analogous equivalences for the norm closure $\fpcstar{\Gamma\curvearrowright X}$ hold as well.
\end{corA}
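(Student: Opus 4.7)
The plan is to deduce all four equivalences directly from the commutant computation $\fpstar{\Gamma\curvearrowright X}' = (L^\infty X)^\Gamma$ (Proposition~\ref{prop:commutant of fpstar}), together with standard von Neumann algebra facts. The role of Theorem~\ref{thmA:finite propagation as crossed product} is implicit, since it is used to pin down the commutant in the cited proposition.

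First I would handle the equivalence (1)$\Leftrightarrow$(2): by definition, ergodicity of $\alpha\colon \Gamma\curvearrowright X$ means exactly that $(L^\infty X)^\Gamma = \C\cdot 1$, which via Proposition~\ref{prop:commutant of fpstar} translates literally to $\fpstar{\Gamma\curvearrowright X}' = \C$. Next, (2)$\Leftrightarrow$(3) is von Neumann's bicommutant theorem: a unital $\ast$-subalgebra $A\leq B(L^2X)$ with $A'=\C$ satisfies $A''=B(L^2X)$, and $A''$ coincides with the WOT-closure of $A$; conversely, if $A$ is WOT-dense in $B(L^2X)$, then $A'=B(L^2X)'=\C$. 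One only needs to note that $\fpstar{\Gamma\curvearrowright X}$ is unital since it contains $\pi_\alpha(e)=\id_{L^2X}$. Finally, (2)$\Leftrightarrow$(4) is the standard reformulation of irreducibility: a $\ast$-subalgebra of $B(\CH)$ leaves a closed subspace invariant if and only if the corresponding orthogonal projection lies in its commutant, so triviality of the commutant is equivalent to the absence of nontrivial invariant closed subspaces.

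For the norm-closure version, the key observation is that the commutant of any subset of $B(L^2X)$ is automatically a von Neumann algebra, hence closed in the WOT (and in particular in the norm topology). Since $\fpcstar{\Gamma\curvearrowright X}$ is the norm closure of $\fpstar{\Gamma\curvearrowright X}$, we have
\[
\fpcstar{\Gamma\curvearrowright X}' = \fpstar{\Gamma\curvearrowright X}' = (L^\infty X)^\Gamma,
\]
and the WOT-closures of the two algebras coincide as well. Therefore the four-way equivalence transfers verbatim from $\fpstar{\Gamma\curvearrowright X}$ to $\fpcstar{\Gamma\curvearrowright X}$.

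There is no substantive obstacle here, as every step is a direct invocation of a named theorem or definition; the main point is simply that Proposition~\ref{prop:commutant of fpstar} packages the nontrivial content (namely, that dynamically invariant functions are the only obstruction to irreducibility), after which the corollary is a formal consequence.
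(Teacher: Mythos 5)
Your proposal is correct and follows exactly the route the paper intends: the paper derives the corollary as an immediate consequence of Proposition~\ref{prop:commutant of fpstar} combined with the standard equivalences between trivial commutant, WOT-density (via the bicommutant theorem), and irreducibility, which is precisely your argument. Your added remarks on unitality and on the commutant being insensitive to passing to the norm closure are the right (routine) justifications for the details the paper leaves implicit.
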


If an irreducible $C^*$-algebra contains a non-zero compact operator, then it must contain the whole algebra of compact operators (see e.g.~\cite[Theorem 2.4.9]{murphy2014c}). This nicely complements some known facts on strongly ergodic actions. By definition, a non-singular action $\Gamma\curvearrowright(X,\mu)$ is \emph{strongly ergodic} if, after passing to any equivalent probability measure $\nu\sim\mu$, the following holds: For every sequence of measurable subsets $A_n\subseteq X$ such that $\nu(A_n\bigtriangleup \gamma \cdot A_n)\to 0$ for every $\gamma\in\Gamma$, we have $\nu(A_n)(1-\nu(A_n))\to 0$. The notion of strong ergodicity originated in \cite{schmidt1980asymptotically,schmidt1981amenability,connes1980property}, and it has, for instance, been significant in various structural results for von Neumann algebras arising from group actions and measurable equivalence relations. It is also strongly related to spectral gap properties, and for measure-preserving actions it can, in fact, be shown to be equivalent to a ``local spectral gap'' \cite{BIG17,marrakchi2018strongly} (see also \cite{li2021asymptotic}). However, up to this point, no purely operator-algebraic characterization of strong ergodicity was known.

From the (known) relations between strong ergodicity and spectral gap properties, it follows that an ergodic action $\Gamma\curvearrowright X$ is strongly ergodic if and only if $\fpcstar{\Gamma\curvearrowright X}$ contains certain non-trivial rank-one projections (see \cite{houdayer2017strongly,marrakchi2018strongly}, or see \cite[Section 4.3]{li2020markovian} for a direct implication). Together with the irreducibility of $\fpcstar{\Gamma\curvearrowright X}$, we deduce the following.

\begin{corA} \label{corA:strong ergodicity}
 The action $\Gamma\curvearrowright (X,\mu)$ is strongly ergodic if and only if $\fpcstar{\Gamma\curvearrowright X}$ contains the compact operators $\CK(L^2X)$.
\end{corA}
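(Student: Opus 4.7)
The plan is to deduce this corollary from Corollary~\ref{corA:ergodicity}, the characterisation of strong ergodicity via the presence of rank-one projections in $\fpcstar{\Gamma\curvearrowright X}$ recalled just above the statement (see \cite{houdayer2017strongly,marrakchi2018strongly,li2020markovian}), and the classical dichotomy that an irreducible $C^*$\=/subalgebra of $B(H)$ containing a single non-zero compact operator must contain the whole ideal $\CK(H)$ (see e.g.\ \cite[Theorem~2.4.9]{murphy2014c}).

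For the forward direction, I would first observe that strong ergodicity implies ergodicity, so by Corollary~\ref{corA:ergodicity} the algebra $\fpcstar{\Gamma\curvearrowright X}$ is an irreducible subalgebra of $B(L^2X)$. The cited spectral gap characterisation then produces a non-trivial rank-one projection inside $\fpcstar{\Gamma\curvearrowright X}$; the dichotomy above upgrades this single non-zero compact to the full ideal, giving $\CK(L^2X)\subseteq\fpcstar{\Gamma\curvearrowright X}$.

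For the reverse direction, assume $\CK(L^2X)\subseteq\fpcstar{\Gamma\curvearrowright X}$. Then the commutant of $\fpcstar{\Gamma\curvearrowright X}$ is contained in the commutant of $\CK(L^2X)$, which is $\C\cdot\id$; hence $\fpcstar{\Gamma\curvearrowright X}$ is irreducible and Corollary~\ref{corA:ergodicity} yields ergodicity of the action. Moreover, the inclusion trivially provides an abundant supply of non-trivial rank-one projections, so the converse implication in the cited spectral gap characterisation delivers strong ergodicity.

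The main obstacle I expect is not in the formal combination of these inputs, but in verifying that the rank-one-projection criterion for strong ergodicity from \cite{houdayer2017strongly,marrakchi2018strongly,li2020markovian} applies in the non-singular generality considered here (rather than only for probability-measure-preserving actions). Once that ingredient is safely in place, the rest of the argument is essentially a one-line consequence of Corollary~\ref{corA:ergodicity} together with the standard fact that irreducible $C^*$\=/subalgebras of $B(H)$ meeting $\CK(H)$ non-trivially must contain it entirely.
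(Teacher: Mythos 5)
Your proposal is correct and follows essentially the same route as the paper: irreducibility from Corollary~\ref{corA:ergodicity}, the rank\-/one\-/projection characterisation of strong ergodicity from the cited references, and the standard dichotomy for irreducible $C^*$\=/algebras meeting $\CK(L^2X)$. The generality issue you flag is resolved by Remark~\ref{rmk:algebras invariant of measureclass}, which lets one pass to an equivalent probability measure without changing the algebra up to canonical isomorphism.
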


\begin{rmk}
From Corollaries \ref{corA:ergodicity} and \ref{corA:strong ergodicity}, it follows that $\Gamma\curvearrowright X$ is ergodic but not strongly ergodic if and only if $\fpcstar{\Gamma\curvearrowright X}$ is irreducible and does not contain any non-zero compact operator.
\end{rmk}

\begin{rmk}
    By Remark~\ref{rmk:algebras invariant of measureclass}, we know that the purely $C^*$-algebraic characterization of strong ergodicity of Corollary~\ref{corA:strong ergodicity} is completely canonical and valid for any choice of $\mu$ in the measure class. In particular, it even holds whenever $\mu$ is an infinite measure, while verifying the usual (and to the authors' knowledge thus far only) definition of strong ergodicity would require to pass to an equivalent probability measure.
\end{rmk}

Another important reason to study the algebras of operators of finite dynamical propagation is that they are instrumental for the understanding of Roe algebras of spaces with warped metrics. This is the topic of the second part of this article, which is largely independent of the first part. Results in this direction were already proved in \cite{higginbotham2019coarse,winkel2021geometric} and, to a lesser extent, in \cite{li2020markovian}. Background on Roe algebras and warped metrics is given in Section~\ref{sec:roe alg warped spaces}.

Given a unitary representation $\pi\colon \Gamma\to B(\CH)$ normalizing a $\ast$\=/algebra $B \leq B(\mathcal{H})$ (i.e.~$\pi(\Gamma)$ normalizes $B$), the vector space spanned by the elements of the form $\pi(\gamma)b$, with $\gamma \in \Gamma$ and $b \in B$, is a $\ast$\=/subalgebra of $B(\mathcal{H})$, which we denote by $\Gamma\cdot B$.
Since $\pi$ normalizes $B$, the $\ast$\=/algebra $\Gamma \cdot B$ also coincides with the vector space spanned by the elements $b\pi(\gamma)$. 
The mapping $(b,\gamma)\mapsto b\pi(\gamma)$ defines a $\ast$\=/homomorphism from the algebraic crossed product $B\rtimesalg\Gamma$ to $B(\CH)$, where the crossed product structure is defined by the conjugation in $B(\CH)$. The image of this homomorphism is $\Gamma\cdot B$.

In particular, we may rephrase the first part of Theorem~\ref{thmA:finite propagation as crossed product} by saying that the Koopman representation $\pi_\alpha$ (see Section \ref{ssec:actions}) normalizes $L^\infty X< B(L^2 X)$ and $\Gamma\cdot (L^\infty X)$ coincides with $\fpstar{\Gamma\curvearrowright X}$.

Now, let $(Y,d,\mu)$ be a proper (unbounded) metric measure space such that $L^2Y$ is an ample geometric $Y$-module, let $\alpha\colon\Gamma\curvearrowright Y$ be a non-singular Lipschitz action, and let $\delta_\Gamma$ denote the warped metric induced by $\alpha$ on $Y$ (see Section~\ref{ssec:warped metric}). The metric $\delta_\Gamma$ encodes features of the metric space $(Y,d)$ and of the group action $\alpha$. Warped metrics are a central source of exotic examples of continuous metric spaces \cite{roe2005warped,Sawicki2018warped,Nowak2017warped,Vig19,li2020markovian} and of families of geometrically rigid expanders and superexpanders \cite{Vig19,delaat2019superexpanders,sawicki2020super,fisher2019rigidity,
vigolo2019discrete,arzhantseva2021origami}.

Let $\roestar{Y,d}<B(L^2Y)$ denote the Roe $\ast$\=/algebra, and let $\roecstar{Y,d}$ denote the Roe $C^{\ast}$-algebra (usually and in the rest of this article just called the Roe algebra), i.e.~the norm closure of $\roestar{Y,d}$. One can verify that $\roestar{Y,d}$ and $\roecstar{Y,d}$ are normalized by the Koopman representation $\pi_\alpha(\Gamma)$; see Lemma~\ref{lem:the action normalizes}. The following is our second main result, which relates the Roe algebra of $(Y,\delta_\Gamma)$ to that of $(Y,d)$. 

\begin{thmA} \label{thmA:roe alg of warped spaces}
 Let $(Y,d,\mu)$ and $\alpha \colon \Gamma \curvearrowright Y$ be as above, and suppose that $Y$ has bounded geometry. Then
 \[
 \roestar{Y, \delta_\Gamma}=\Gamma\cdot \roestar{Y, d}
 \quad\text{and}\quad
 \roecstar{Y, \delta_\Gamma}=\overline{\Gamma\cdot\roecstar{Y, d}}^{\,\norm{\cdot}},
 \]
 where $\roestar{Y, \delta_\Gamma}$ and $\roecstar{Y, \delta_\Gamma}$ denote the Roe $\ast$-algebra and Roe algebra of the warped space $(Y,\delta_\Gamma)$.
\end{thmA}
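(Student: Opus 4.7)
The plan is to prove the algebraic identity $\roestar{Y,\delta_\Gamma}=\Gamma\cdot\roestar{Y,d}$; the norm-closure statement then follows by a routine approximation argument using that each $\pi_\alpha(\gamma)$ is unitary. Fix a finite symmetric generating set $S\subseteq\Gamma$ and let $|\cdot|_S$ denote the corresponding word length. The inclusion $\Gamma\cdot\roestar{Y,d}\subseteq\roestar{Y,\delta_\Gamma}$ is straightforward: since $\delta_\Gamma\leq d$, finite $d$-propagation implies finite $\delta_\Gamma$-propagation, and each $\pi_\alpha(\gamma)$ has $\delta_\Gamma$-propagation at most $|\gamma|_S$; combined with Lemma~\ref{lem:the action normalizes} this settles the easy direction.

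For the reverse inclusion I would first prove a geometric comparison lemma: for each $R\geq 0$ there exist a finite set $F_R\subseteq\Gamma$ and a constant $R'>0$ such that $\delta_\Gamma(y,y')\leq R$ implies the existence of some $\gamma\in F_R$ with $d(y,\gamma^{-1}y')\leq R'$. A $\delta_\Gamma$-path realising the distance is a concatenation of at most $R$ moves, each of which is either a small $d$-step or a group jump $y\mapsto sy$ with $s\in S$. Since $S$ is symmetric and each element of $S$ is bi-Lipschitz with constant bounded by some $L\geq 1$, I can iteratively commute each group jump to the right end of the path at the cost of multiplying the intervening $d$-lengths by $L$. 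The rewritten path realises $y'=\gamma y''$ with $|\gamma|_S\leq R$ and $d(y,y'')\leq L^R R$, giving the claim with $R'\coloneqq L^R R$ and $F_R$ the $S$-word ball of radius $R$.

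Given $T\in\roestar{Y,\delta_\Gamma}$ of $\delta_\Gamma$-propagation at most $R$, bounded geometry provides a Borel partition $\{E_n\}$ of $Y$ into sets of $d$-diameter at most $1$. Set $\phi_n\coloneqq\chi_{E_n}$. For each ordered pair $(m,n)$ with $\delta_\Gamma(E_m,E_n)\leq R+2$, a measurable selection furnishes $\gamma_{m,n}\in F_{R+2}$ with $d(\gamma_{m,n}^{-1}E_m,E_n)\leq R'$. Define
\[
T_\gamma \coloneqq \pi_\alpha(\gamma)^{-1}\sum_{(m,n)\,:\,\gamma_{m,n}=\gamma}\phi_m T\phi_n,
\]
with the sum interpreted in the strong operator topology. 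By bounded geometry, for each $m$ only finitely many $n$ contribute (and symmetrically for each $n$), so a Schur-type estimate bounds $\|T_\gamma\|$ by a constant multiple of $\|T\|$; the defining bound $d(\gamma^{-1}E_m,E_n)\leq R'$ shows $T_\gamma$ has $d$-propagation at most $R'+2$, and local compactness of $T_\gamma$ is inherited from that of $T$ because multiplication by $\phi_m,\phi_n$ and by the unitary $\pi_\alpha(\gamma)^{-1}$ all preserve this property. The identity $T=\sum_{\gamma\in F_{R+2}}\pi_\alpha(\gamma)T_\gamma$ is then a finite sum exhibiting $T\in\Gamma\cdot\roestar{Y,d}$.

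The main technical obstacle will be the combination of the measurable selection of the $\gamma_{m,n}$ and the Schur-type convergence/boundedness argument for the strong-operator subsums; the geometric comparison lemma, by contrast, is clean but is precisely where the Lipschitz hypothesis on $\alpha$ and the bounded geometry of $Y$ enter critically.
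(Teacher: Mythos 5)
Your overall strategy coincides with the paper's: your ``geometric comparison lemma'' is exactly Lemma~\ref{lem:balls in finite products} (proved there by the same commute\-/the\-/jumps induction), and the decomposition of a finite\-/$\delta_\Gamma$\-/propagation operator into finitely many pieces $\pi_\alpha(\gamma)T_\gamma$ with $T_\gamma$ of finite $d$\-/propagation, followed by the observation that compressions $\chi_A T\chi_B$ preserve local compactness, is precisely Proposition~\ref{prop:span generates fpstar} together with Remark~\ref{rmk:span with locally compact}; the easy inclusion and the passage to norm closures are also handled as in the paper. The one real divergence is how the regrouped sums are controlled: the paper (Lemma~\ref{lem:partitioning bded geometry spaces}) colours a separated partition into finitely many classes so that, within each class, the summands $\chi_{\Omega}T\chi_{U}$ have pairwise orthogonal initial and final projections, whence the SOT sums converge with norm at most $\norm{T}$; you instead keep a single partition and invoke a Schur\-/type almost\-/orthogonality estimate with a uniform multiplicity constant. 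Both devices work, but the paper's colouring avoids the multiplicity constant altogether.

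There is, however, a genuine gap at the point you yourself flag as the technical obstacle. The claim that ``by bounded geometry, for each $m$ only finitely many $n$ contribute (and symmetrically)'' is false for an arbitrary Borel partition $\{E_n\}$ into sets of $d$\-/diameter at most $1$: nothing prevents infinitely many $E_n$ from accumulating inside a single ball (refine one piece into countably many subsets), and then neither the uniform multiplicity bound needed for the Schur estimate nor the local compactness of the SOT sums is available --- $V_\gamma\rho(f)$ would be an infinite SOT sum of compact operators, which need not be compact. You must additionally arrange the partition to be uniformly locally finite, e.g.\ by building it from a maximal $\varrho$\-/separated net exactly as in the proof of Lemma~\ref{lem:partitioning bded geometry spaces}; bounded geometry of $(Y,\delta_\Gamma)$ (Corollary~\ref{cor:warped bounded geometry}) then does give a uniform bound on the number of $E_m$ within $\delta_\Gamma$\-/distance $R+2$ of a given $E_n$, and both the Schur bound and local compactness go through. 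Two smaller points: $\Gamma$ is only assumed countable, so the word metric of a finite symmetric generating set should be replaced by the proper length function $\ell$ used to define $\delta_\Gamma$ (the set $F_R=B_\Gamma(R)$ is still finite by properness, and the Lipschitz constant $L$ should be taken as a maximum over this finite set); and no measurable selection is needed for the $\gamma_{m,n}$, since the index set of pairs is countable.
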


\begin{corA} \label{corA:warped roe as crossed product}
 In the setting of Theorem \ref{thmA:roe alg of warped spaces}, there is a surjective $\ast$\=/homomorphism
 \[
  \varPsi\colon\roestar{Y,d}\rtimesalg\Gamma\to \roestar{Y,\delta_\Gamma}.
 \]
 Taking the completion, we may thus naturally identify $\roecstar{Y,\delta_\Gamma}$ with a quotient of a certain $C^*$\=/crossed product.
\end{corA}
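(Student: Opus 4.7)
The plan is to extract the corollary from Theorem~\ref{thmA:roe alg of warped spaces} by applying the abstract construction recalled in the paragraph just before that theorem. First, I would invoke the fact (Lemma~\ref{lem:the action normalizes}, cited in the text) that the Koopman representation $\pi_\alpha$ normalizes $\roestar{Y,d}$ inside $B(L^2 Y)$. This is precisely the hypothesis that activates the universal construction: the assignment $(b,\gamma)\mapsto b\,\pi_\alpha(\gamma)$ defines a $\ast$\=/homomorphism $\varPsi\colon\roestar{Y,d}\rtimesalg\Gamma\to B(L^2Y)$, where the $\Gamma$-$\ast$\=/algebra structure on $\roestar{Y,d}$ is given by conjugation, and by construction the image of $\varPsi$ is exactly $\Gamma\cdot\roestar{Y,d}$. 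Applying the first identity of Theorem~\ref{thmA:roe alg of warped spaces}, $\Gamma\cdot\roestar{Y,d}=\roestar{Y,\delta_\Gamma}$, which gives the surjective $\varPsi$ asserted in the first half of the corollary.

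For the second half, the natural interpretation of ``a certain $C^*$-crossed product'' is the maximal one. Since $\pi_\alpha$ normalizes $\roestar{Y,d}$ by unitary conjugation, it normalizes the norm closure $\roecstar{Y,d}$ as well, so the conjugation action extends to a $\Gamma$-action by $\ast$\=/automorphisms on the $C^*$-algebra $\roecstar{Y,d}$. The pair $(\roecstar{Y,d}\hookrightarrow B(L^2Y),\pi_\alpha)$ is then a covariant representation, and the universal property of the maximal crossed product produces a $\ast$\=/homomorphism $\bar\varPsi\colon\roecstar{Y,d}\rtimesmax\Gamma\to B(L^2Y)$ extending $\varPsi$. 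Its image is a $C^*$-subalgebra of $B(L^2Y)$ containing the dense $\ast$\=/subalgebra $\roestar{Y,\delta_\Gamma}=\Gamma\cdot\roestar{Y,d}$ of $\roecstar{Y,\delta_\Gamma}$, and hence by the second identity of Theorem~\ref{thmA:roe alg of warped spaces} it equals $\roecstar{Y,\delta_\Gamma}$. This presents $\roecstar{Y,\delta_\Gamma}$ as the quotient $(\roecstar{Y,d}\rtimesmax\Gamma)/\ker(\bar\varPsi)$.

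The main issue is conceptual rather than analytic: the statement is essentially packaged by Theorem~\ref{thmA:roe alg of warped spaces} together with the normalization/crossed-product formalism set up just before it, so the work reduces to citing the correct universal property and the correct identity of $\ast$\=/algebras. The most delicate point is selecting the $C^*$-norm on the crossed product; the maximal completion is canonical for an arbitrary covariant representation, but whenever $\pi_\alpha$ factors through the reduced crossed product (e.g.\ under suitable amenability hypotheses on $\Gamma$ or on the action) one obtains analogous surjections from $\roecstar{Y,d}\rtimes_{\mathrm{red}}\Gamma$ by exactly the same argument.
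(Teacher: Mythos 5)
Your proposal is correct and follows essentially the same route as the paper: the paper also obtains $\varPsi$ from the normalization of $\roestar{Y,d}$ by $\pi_\alpha$ (Lemma~\ref{lem:the action normalizes}), identifies its image with $\Gamma\cdot\roestar{Y,d}=\roestar{Y,\delta_\Gamma}$ via Theorem~\ref{thmA:roe alg of warped spaces}, and then passes to completions using the universal property of the maximal crossed product (cf.\ the map \eqref{eq:varPsi} and Corollary~\ref{cor:roe algebras are images of crossed products}). Your closing remark on the reduced crossed product under amenability is likewise consistent with the paper's discussion.
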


The situation is even cleaner for maximal Roe algebras:
\begin{corA} \label{corA:warped roe as crossed product-maximal}
    Under the same assumptions as in Corollary \ref{corA:warped roe as crossed product}, there is a surjective $\ast$\=/homomorphism
    \[
    \roecstarmax{Y,d}\rtimesmax\Gamma\to \roecstarmax{Y,\delta_\Gamma}.
    \]
\end{corA}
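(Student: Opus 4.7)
The plan is to invoke the universal property of the maximal crossed product, applied to a covariant pair built from the natural inclusion of Roe algebras and the Koopman representation. Since $\delta_\Gamma\leq d$, every operator of finite $d$\=/propagation has finite $\delta_\Gamma$\=/propagation, and local compactness is the same for both metrics (as they induce the same topology on $Y$); this yields an inclusion of $\ast$\=/algebras $\roestar{Y,d}\hookrightarrow \roestar{Y,\delta_\Gamma}$. Composing with the dense embedding into $\roecstarmax{Y,\delta_\Gamma}$, the pullback of the target $C^*$\=/norm is a $C^*$\=/seminorm on $\roestar{Y,d}$ and is therefore dominated by the maximal Roe $C^*$\=/norm; by continuity, this extends to a $\ast$\=/homomorphism
\[
\iota\colon \roecstarmax{Y,d}\to \roecstarmax{Y,\delta_\Gamma}.
\]
The same universality, applied to conjugation by each $\pi_\alpha(\gamma)$ (an automorphism of $\roestar{Y,d}$ by Lemma~\ref{lem:the action normalizes}), yields a $\Gamma$\=/action on $\roecstarmax{Y,d}$.

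To realise the Koopman unitaries as multipliers of $\roecstarmax{Y,\delta_\Gamma}$, observe that each $\pi_\alpha(\gamma)$ has $\delta_\Gamma$\=/propagation at most $1$, so left and right multiplication by $\pi_\alpha(\gamma)$ preserve $\roestar{Y,\delta_\Gamma}$. Since $\pi_\alpha(\gamma)^*\pi_\alpha(\gamma)=1$ in $B(L^2Y)$,
\[
\norm{\pi_\alpha(\gamma)T}_{\max}^2=\norm{(\pi_\alpha(\gamma)T)^*(\pi_\alpha(\gamma)T)}_{\max}=\norm{T^*T}_{\max}=\norm{T}_{\max}^2
\]
for every $T\in\roestar{Y,\delta_\Gamma}$, so left multiplication by $\pi_\alpha(\gamma)$ extends to an isometry of $\roecstarmax{Y,\delta_\Gamma}$, and analogously on the right. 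The resulting double centralisers give a unitary representation $U\colon \Gamma\to UM(\roecstarmax{Y,\delta_\Gamma})$, and the pair $(\iota,U)$ satisfies the covariance relation with respect to the $\Gamma$\=/action on $\roecstarmax{Y,d}$, as verified on the dense subalgebra $\roestar{Y,d}$. This step is the main technical hurdle; the key is the purely algebraic identity $(\pi_\alpha(\gamma)T)^*(\pi_\alpha(\gamma)T)=T^*T$ inside $\roestar{Y,\delta_\Gamma}$, which bounds maximal norms without committing to a specific representation.

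The universal property of the maximal crossed product then produces a $\ast$\=/homomorphism
\[
\Psi_{\max}\colon \roecstarmax{Y,d}\rtimesmax\Gamma \to \roecstarmax{Y,\delta_\Gamma},
\]
whose image lies in $\roecstarmax{Y,\delta_\Gamma}$ (not merely its multiplier algebra) because $\iota(T)U_\gamma=T\pi_\alpha(\gamma)$ lies in $\roestar{Y,\delta_\Gamma}$ for $T\in\roestar{Y,d}$ and $\gamma\in\Gamma$. For surjectivity, the image contains the dense subspace $\Gamma\cdot\roestar{Y,d}=\roestar{Y,\delta_\Gamma}$ of $\roecstarmax{Y,\delta_\Gamma}$ by Theorem~\ref{thmA:roe alg of warped spaces}; since $\ast$\=/homomorphisms between $C^*$\=/algebras have closed image, $\Psi_{\max}$ is surjective.
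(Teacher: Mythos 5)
Your proof is correct and follows essentially the same route as the paper: extend the inclusion $\roestar{Y,d}\hookrightarrow\roestar{Y,\delta_\Gamma}$ via the universal property of the maximal Roe algebra, invoke the universal property of $\rtimesmax$, and deduce surjectivity from the density of $\Gamma\cdot\roestar{Y,d}=\roestar{Y,\delta_\Gamma}$ together with the fact that $\ast$\=/homomorphisms of $C^*$\=/algebras have closed range. The only difference is that you carefully spell out the realisation of the Koopman unitaries as multipliers of $\roecstarmax{Y,\delta_\Gamma}$ (via the $C^*$\=/identity argument), a step the paper dismisses as ``immediate''.
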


\begin{rmk} \
    \begin{enumerate}
        \item For actions on discrete metric spaces, Corollary~\ref{corA:warped roe as crossed product-maximal} is essentially proved in \cite[Theorem 5.2]{higginbotham2019coarse}.
        \item We also prove analogues of Theorem~\ref{thmA:roe alg of warped spaces} and Corollary~\ref{corA:warped roe as crossed product} for the algebras $\fpstar{Y,\delta_\Gamma}$ and $\fpcstar{Y,\delta_\Gamma}$ of operators of finite propagation.
    \end{enumerate}
\end{rmk}

Finally, we study the consequences of the above results in the setting of warped cones. Given an action $\Gamma \curvearrowright X$, where $(X,d)$ is a compact metric space, the warped cone $\CO_\Gamma X$, as introduced in \cite{roe2005warped}, is obtained by warping the open cone $Y=\CO X$ along an action of the form $\alpha\times \id \colon \Gamma\curvearrowright \CO X$; see Section \ref{sec:warped cones} for details. Warped cones encode coarse geometric features of the space $X$ and of the dynamics of the action. In this setting, we may employ the cone structure to understand the kernels of the $\ast$-homomorphisms we constructed more precisely. Specifically, we prove the following (we refer to the Notation on page \pageref{notation module cones} for details).

\begin{thmA} \label{thmA:roe warped cone}
Let $(X,d,\mu)$ be a compact metric space of diameter at most $2$ with a $\sigma$-finite measure of full support, let $\alpha\colon\Gamma\curvearrowright X$ be a free non-singular Lipschitz action, suppose that $\CO X$ has bounded geometry and that $\CH_{\CO X}$ has the operator norm localization property. Then the surjective $\ast$\=/homomorphism 
\[\varPsi\colon\roestar{\CO X}\rtimesalg \Gamma\to\roestar{\CO_\Gamma X}\]
descends to an isomorphism
 \[
  \frac{\roestar{\CO X}}{\CK\paren{L^2(\CO X)}\cap \roestar{\CO X}}
  \rtimesalg \Gamma
  \cong \frac{\roestar{\CO_\Gamma X}}{\CK\paren{L^2(\CO X)}\cap \roestar{\CO_\Gamma X}}
 \]
 and to an injective $\ast$-homomorphism with dense image:
 \[
  \frac{\roecstar{\CO X}}{\CK\paren{L^2(\CO X)}}
  \rtimesalg \Gamma
  \to \frac{\roecstar{\CO_\Gamma X}}{\CK\paren{L^2(\CO X)}}.
 \]
 Thus, the quotient $\frac{\roecstar{\CO_\Gamma X}}{\CK\paren{L^2(\CO X)}}$ may be viewed as a completion of $\frac{\roecstar{\CO X}}{\CK\paren{L^2(\CO X)}}\rtimesalg \Gamma$.
\end{thmA}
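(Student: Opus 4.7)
The plan is to upgrade the surjection $\varPsi\colon\roestar{\CO X}\rtimesalg\Gamma\to\roestar{\CO_\Gamma X}$ from Corollary~\ref{corA:warped roe as crossed product} to the quotients by the compact operators, on both the algebraic and $C^*$ levels. Descent is routine: conjugation by the Koopman unitaries $\pi_\alpha(\gamma)$ preserves $\CK(L^2\CO X)$, so $\varPsi$ sends the two-sided ideal $\bigl(\CK(\CO X)\cap\roestar{\CO X}\bigr)\rtimesalg\Gamma$ of the algebraic crossed product into $\CK(\CO X)\cap\roestar{\CO_\Gamma X}$, yielding a well-defined quotient $\ast$-homomorphism $\bar\varPsi$. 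Surjectivity of the algebraic quotient map is inherited directly from $\varPsi$, and density of the image in the $C^*$-statement follows since the image already contains the image of the algebraic crossed product modulo compacts.

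The crux is injectivity: given a relation $T=\sum_{\gamma\in F}T_\gamma\pi_\alpha(\gamma)\in\CK(\CO X)$ with $F\subset\Gamma$ finite and each $T_\gamma\in\roestar{\CO X}$ (respectively $\roecstar{\CO X}$), I want to deduce that every $T_\gamma$ is compact. Under the bounded geometry and ONL hypotheses on $\CH_{\CO X}$, every ghost operator in the Roe algebra of $\CO X$ is compact (a module-theoretic analogue of the Chen--Wang--Wang theorem on ghosts in uniform Roe algebras), so it suffices to show that each $T_\gamma$ is a ghost. Fix $\gamma_0\in F$ and let $R$ be an upper bound on the (non-warped) propagations of the $T_\gamma$. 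For any bounded measurable sets $A,B\subset\CO X$, a direct expansion gives
\begin{equation*}
\chi_A\, T\, \chi_{\gamma_0^{-1}B}\;=\;\sum_{\gamma\in F}\chi_A\, T_\gamma\, \chi_{\gamma\gamma_0^{-1}B}\,\pi_\alpha(\gamma).
\end{equation*}
If the sets $\{\gamma\gamma_0^{-1}B:\gamma\in F\}$ are pairwise separated by more than $R$ in the (non-warped) cone metric, then all summands with $\gamma\neq\gamma_0$ vanish, yielding $\|\chi_A T_{\gamma_0}\chi_B\|=\|\chi_A T\chi_{\gamma_0^{-1}B}\|$. Since $T$ is compact, the right-hand side tends to zero as $A,B$ escape to infinity, establishing the ghost property for $T_{\gamma_0}$.

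The required separation is where the cone and freeness hypotheses enter. Because $\alpha$ is free and Lipschitz on the compact space $X$, for every non-trivial $\gamma\in\Gamma$ the continuous function $x\mapsto d_X(x,\gamma x)$ attains a positive infimum $\delta_\gamma>0$, and on the cone $\CO X=X\times[1,\infty)$ this scales so that $d_{\CO X}\bigl((x,t),(\gamma x,t)\bigr)\gtrsim t\,\delta_\gamma$. Hence, for $A,B$ of bounded diameter located at cone-height at least $t$, with $t$ sufficiently large depending on $F$ and $R$, the translated sets $\{\gamma\gamma_0^{-1}B\}_{\gamma\in F}$ are pairwise $R$-separated uniformly in the position of $B$, and the conclusion of the previous paragraph applies.

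I expect the main technical obstacle to be twofold: first, carefully justifying the ghost-equals-compact characterization for the Roe $C^*$-algebra built from the module $\CH_{\CO X}$ under ONL (rather than for the uniform Roe algebra of a discrete space, where it is classical); and second, transferring the separation argument through the norm closure in the $C^*$-statement by approximating each $T_\gamma\in\roecstar{\CO X}$ by a finite-propagation operator with errors controlled uniformly across the finite set $F$.
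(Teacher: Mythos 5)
Your proposal is correct in outline and reaches the theorem, but it packages the key injectivity step differently from the paper, so a comparison is worthwhile. The descent, surjectivity/density, and the reduction of the $C^*$-level statement to finite-propagation representatives by uniform approximation over the finite set $F$ all match the paper. For injectivity, the paper argues directly: if some $T_{\bar\gamma}$ is not compact, it combines Lemma~\ref{lem:locally cpt not cpt} (a locally compact, non-compact operator keeps norm $\geq\epsilon$ on every tail $X\times[t,\infty)$) with ONL to produce pairwise orthogonal, bounded-diameter sets $A_n$ escaping to infinity with $\norm{\chi_{A_n}T_{\bar\gamma}}\geq c\epsilon$, and then uses exactly your separation mechanism (cone scaling plus freeness and compactness of $X$, which make the translates of a far-out bounded set disjoint) to get $\norm{\chi_{A_n}W}\geq c\epsilon$, so $W$ is not compact. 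You run the contrapositive through ghost operators: compactness of $T$ plus separation makes each $T_{\gamma_0}$ a ghost, and you then invoke ``ghosts are compact under ONL'' as a black box. That black box is the one real hole: the ghostbusting results in the literature are stated for (uniform) Roe algebras of discrete spaces, not for the module $\CH_{\CO X}=L^2(X\times\halfline)$. Fortunately, for the operators you need it for --- locally compact with finite propagation $r$ --- it follows in two lines from the paper's own ingredients: if $T_{\gamma_0}$ is not compact, Lemma~\ref{lem:locally cpt not cpt} gives $\norm{T_{\gamma_0}\chi_{X\times[t,\infty)}}\geq\epsilon$ for all $t$, and ONL applied to $T_{\gamma_0}\chi_{X\times[t,\infty)}$ yields bounded-diameter sets $A$ and $B=N_{\CO X}(A;r)\cap(X\times[t,\infty))$ arbitrarily far out with $\norm{\chi_AT_{\gamma_0}\chi_B}\geq c\epsilon$, contradicting the ghost property; you should prove this directly rather than cite the general theorem. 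Two smaller points to tighten: the vanishing of the cross terms $\chi_AT_\gamma\chi_{\gamma\gamma_0^{-1}B}$ needs $d(A,\gamma\gamma_0^{-1}B)>R$, not just pairwise separation of the translates of $B$ (harmless, since you may assume $d(A,B)\leq R$, as otherwise $\chi_AT_{\gamma_0}\chi_B=0$ anyway); and the claim that $\norm{\chi_AT\chi_{\gamma_0^{-1}B}}\to0$ should be justified via $\norm{T\chi_{X\times[t,\infty)}}\to0$ for compact $T$, using that $\alpha\times\id$ preserves the height coordinate. With these repairs your argument is complete; the paper's version buys self-containedness, while yours makes explicit that the mechanism is a ghostbusting phenomenon.
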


\begin{rmk}
    A related result was proved in \cite[Lemma 11.8]{winkel2021geometric}. There, instead of Roe $\ast$-algebras and compact operators, the quotient of $\fpstar{\CO X}$ by the algebra of operators with finite support is studied.
\end{rmk}

Our interest in Theorem~\ref{thmA:roe warped cone} stems from a desire to understand the analytic $K$-theory of warped cones, which is part of ongoing work. As with Theorem~\ref{thmA:roe alg of warped spaces}, also Theorem~\ref{thmA:roe warped cone} interacts nicely with maximal norms. In particular, it implies the following.

\begin{corA} \label{corA:roe alg of warped cones-maximal}
    Given $\Gamma\curvearrowright X$ as in Theorem~\ref{thmA:roe warped cone}, $\varPsi$ induces an isomorphism
    \[
      \frac{\roecstarmax{\CO X}}{\CK\paren{L^2(\CO X)}}
      \rtimesmax \Gamma
      \cong \frac{\roecstarmax{\CO_\Gamma X}}{\CK\paren{L^2(\CO X)}}.
    \]
\end{corA}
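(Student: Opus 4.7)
The idea of the plan is to show that both sides of the claimed isomorphism are the maximal $C^*$-completion of one and the same $*$-algebra, namely the one appearing in Theorem \ref{thmA:roe warped cone}, and then invoke that theorem.

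First I would check that the Koopman action on $\roestar{\CO X}$ (by conjugation) extends, via the universal property of the maximal Roe algebra, to a $\Gamma$-action on $\roecstarmax{\CO X}$ by $\ast$-automorphisms; since $\CK(\CO X)$ is simple, it carries a unique $C^*$-norm and therefore embeds canonically as a $\Gamma$-invariant ideal of $\roecstarmax{\CO X}$. Hence the max crossed product $\frac{\roecstarmax{\CO X}}{\CK(\CO X)} \rtimesmax \Gamma$ is well-defined.

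The key step is to identify this object with the maximal $C^*$-completion of the algebraic crossed product $\frac{\roestar{\CO X}}{\CK(\CO X) \cap \roestar{\CO X}} \rtimesalg \Gamma$. This comes from a chain of universal-property identifications: (i) by definition $\roecstarmax{\CO X}$ is the maximal $C^*$-completion of $\roestar{\CO X}$; (ii) consequently, $\frac{\roecstarmax{\CO X}}{\CK(\CO X)}$ is the maximal $C^*$-completion of $\frac{\roestar{\CO X}}{\CK(\CO X) \cap \roestar{\CO X}}$, because a $\ast$-representation of the latter annihilates $\CK(\CO X) \cap \roestar{\CO X}$ if and only if its extension to $\roecstarmax{\CO X}$ annihilates the closure $\CK(\CO X)$; (iii) $\ast$-representations of $\frac{\roestar{\CO X}}{\CK \cap \roestar{\CO X}} \rtimesalg \Gamma$ correspond bijectively to covariant representations of $(\frac{\roestar{\CO X}}{\CK \cap \roestar{\CO X}},\Gamma)$, which by (ii) correspond to covariant representations of $(\frac{\roecstarmax{\CO X}}{\CK(\CO X)},\Gamma)$, i.e., to $\ast$-representations of the max crossed product. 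The analogous argument (without the crossed product) shows that $\frac{\roecstarmax{\CO_\Gamma X}}{\CK(\CO X)}$ is the maximal $C^*$-completion of $\frac{\roestar{\CO_\Gamma X}}{\CK(\CO X) \cap \roestar{\CO_\Gamma X}}$.

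With these identifications in hand, Theorem \ref{thmA:roe warped cone} provides a canonical $\ast$-isomorphism of the two dense $\ast$-subalgebras, induced by $\varPsi$. Passing to maximal $C^*$-completions (which are unique up to canonical isomorphism for a given $\ast$-algebra) yields the claimed isomorphism, and it is $\varPsi$-induced by construction.

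The main obstacle I anticipate is step (iii), namely verifying that the maximal $C^*$-seminorm on the algebraic crossed product coming from its covariant representations of the dense $\Gamma$-invariant $\ast$-subalgebra coincides with the norm inherited from the max crossed product of the ambient $C^*$-algebra. This reduces to the standard fact that a covariant representation of a $\Gamma$-invariant dense $\ast$-subalgebra of a $C^*$-algebra extends uniquely and continuously to a covariant representation of the whole $C^*$-algebra, but care is needed to ensure that the $\ast$-representations of the quotient really correspond to the covariant representations that annihilate the ideal.
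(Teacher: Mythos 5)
Your proposal is correct and follows essentially the same route as the paper: both arguments reduce the statement to the algebraic isomorphism $\overline\varPsi$ of Theorem~\ref{thmA:roe warped cone} together with the universal properties of the maximal Roe algebra and the full crossed product (plus the standard fact that $\CK(\CO X)$ sits canonically inside the maximal completions). The paper packages this as an explicit construction of two mutually inverse $\ast$\=/homomorphisms, whereas you recognise both sides as the maximal $C^*$\=/completion of the same dense $\ast$\=/algebra; these are the same argument in different clothing, and the delicate point you flag (matching covariant representations of the dense subalgebra with those of its $C^*$\=/closure) is exactly the step the paper handles via the uniqueness clause of the universal property.
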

The above corollary can be viewed as an analogue for warped cones of \cite[Proposition 2.8]{OyonoOyonoYu2009}.
\begin{rmk}
    Corollary~\ref{corA:roe alg of warped cones-maximal} is closely related to \cite[Theorem 5.2]{higginbotham2019coarse}. There, an analogous isomorphism is proved for warped spaces arising from coarsely discontinuous actions on discrete metric spaces. Since $\Gamma\curvearrowright X$ is free, the action $\Gamma\curvearrowright \CO X$ is indeed coarsely discontinuous; however, \cite[Theorem 5.2]{higginbotham2019coarse} does not apply because $\CO X$ is very much not discrete: In most cases of interest (e.g.~ergodic actions on compact manifolds), there is no natural way to discretize $\CO X$ while retaining the $\Gamma$-action.

    Under the additional (strong) assumption that $\Gamma$ is amenable, one can show that the resulting warped cone has Yu's property A (see \cite[Proposition~ 3.1]{roe2005warped} or \cite[Theorem~5.9]{higginbotham2019coarse}). In this case, the reduced and the maximal Roe algebra (resp.\ crossed products) coincide. Hence, one also obtains an isomorphism (cf.~\cite[Corollary 5.10]{higginbotham2019coarse})
    \[
      \frac{\roecstar{\CO X}}{\CK\paren{L^2(\CO X)}}
      \rtimes_{\rm red} \Gamma
      \cong \frac{\roecstar{\CO_\Gamma X}}{\CK\paren{L^2(\CO X)}}.
    \]
\end{rmk}

\subsection*{Structure of the paper}
Section~\ref{sec:prelims} contains preliminaries and notation that will be used throughout; preliminaries on Roe algebras and warped spaces are given in Section~\ref{sec:prelims of roe algebras and warped metrics}. Theorem~\ref{thmA:finite propagation as crossed product} is proved in Section \ref{sec:main thm A}, Theorem \ref{thmA:roe alg of warped spaces} is proved in Section \ref{sec:roe alg warped spaces}, and Theorem~\ref{thmA:roe warped cone} is proved in Section~\ref{sec:warped cones}.

\subsection*{Acknowledgements} 
We are grateful to Thomas Weighill for pointing out \cite{higginbotham2019coarse} to us and for drawing our attention to the possible consequences of our results for maximal Roe algebras and full crossed products. We thank Ralf Meyer for his valuable input and Christos Kitsios for his comments on an early version of this paper.

This research was funded by the Deutsche Forschungsgemeinschaft -- Project-ID 427320536 -- SFB 1442; Project-ID 398436923 -- GRK 2491; as well as under Germany’s Excellence Strategy -- EXC 2044 -- 390685587, Mathematics M\"unster: Dynamics -- Geometry -- Structure.

\section{PRELIMINARIES: ACTIONS AND ALGEBRAS}\label{sec:prelims}

\subsection{Notation and conventions} \label{subsec:notationandconventions}
$\Gamma$ always denotes a countable discrete group. We generally use $X$ for measure spaces or compact metric spaces, while we use $Y$ for proper metric spaces which typically have infinite diameter.

Given a $\ast$\=/subalgebra $B\leq B(\CH)$, an operator $T\in B(\CH)$ \emph{normalizes} $B$ if $TbT^*\in B$ for every $b\in B$.
Given $A,C\subseteq  B(\CH)$, we denote by $A\circ C$ the set of operators $\{ac\mid a\in A, c\in C\}$, and by $\spanc\paren{A}$ the vector subspace generated by $A$. These are in general not subalgebras. 
However, if $A,B\leq\B(\CH)$ are $\ast$\=/subalgebras such that $A$ normalizes $B$, then the vector space $\spanc\paren{A\circ B}=\spanc\paren{B\circ A}$ is a $\ast$\=/algebra, which we may denote by $A\cdot B$ or $B\cdot A$.
In particular, this applies to $A=\pi(\Gamma)$ if $\pi\colon \Gamma\to B(\CH)$ is a unitary representation normalizing $B$ (i.e.~such that $\pi(\Gamma)$ normalizes $B$).

\begin{notation}
 If $\pi$ is clear from the context, we simply denote by $\Gamma\cdot B$ the $\ast$\=/algebra $\pi(\Gamma)\cdot B=B\cdot\pi(\Gamma)$ (see also Section \ref{sec:introduction}).
\end{notation}

Note that when $B$ is not unital, $\Gamma\cdot B$ need not be the $\ast$\=/algebra generated by $\pi(\Gamma)$ and $B$.

\subsection{Actions on measure spaces} \label{ssec:actions}
Throughout, by a \emph{measure space} $(X,\mu)$, we mean a standard Borel space $X$ (i.e.~the underlying $\sigma$-algebra is the Borel $\sigma$-algebra defined by some complete and separable metric) equipped with a $\sigma$-finite measure. When the measure is clear from the context, we write $L^pX$ for $L^p(X,\mu)$.

We consider measurable group actions $\alpha\colon\Gamma\curvearrowright (X,\mu)$ of countable discrete groups $\Gamma$. We do not require $\mu$ to be invariant under the action, but we do assume it to be quasi-invariant, i.e.~the $\Gamma$-action is measure class preserving. Such an action is said to be \emph{non-singular}. A non-singular action $\alpha$ induces a linear action $\Gamma\curvearrowright L^\infty X$ by precomposition:
\[
    \gamma \cdot f(x)\coloneqq f(\gamma^{-1}\cdot x).
\]

For every $\gamma \in \Gamma$ and $x \in X$, let
\(
 r(\gamma,x)\coloneqq \frac {d\gamma_*\mu}{d\mu}(x)
\)
be the Radon--Nikodym cocycle, which satisfies the cocycle identity 
\begin{equation}\label{eq:cocycle}
    r(\gamma_1\gamma_2,x)=r(\gamma_1,x)r(\gamma_2,\gamma_1^{-1} \cdot x).    
\end{equation}
The \emph{Koopman representation} associated with the action $\alpha\colon \Gamma\curvearrowright (X,\mu)$ is the unitary representation $\pi_\alpha\colon\Gamma\to U (L^2X)$ defined by
\begin{equation}\label{eq:koopman}
\pi_\alpha(\gamma)\xi(x)\coloneqq r(\gamma,x)^{\frac 12}\xi(\gamma^{-1}\cdot x)
\end{equation}
for $\xi\in L^2X$ and $\gamma\in \Gamma$.

An action $\alpha\colon\Gamma\curvearrowright (X,\mu)$ is \emph{essentially free} if for every $\gamma \in \Gamma \smallsetminus \{e\}$, the set of points fixed by $\gamma$ has measure $0$. In this case we can choose a co-null $\Gamma$-invariant subset $X_0$ such that the restriction of $\alpha$ to $X_0$ is free. Thus, from a measure-theoretic point of view, there is no difference between free and essentially free actions. On the other hand, it can be convenient to consider essentially free actions when one wishes to preserve some extra structure, e.g.~when $X$ is a compact topological space and the action is by continuous maps.

\subsection{Crossed products}\label{ssec:crossed products}
We now recall some background on algebraic crossed products. For details, we refer to e.g. \cite{brownozawa2008}, \cite{williams2007crossed}.

Let $\alpha \colon \Gamma \curvearrowright A$ be an action by $\ast$-homomorphisms on a $\ast$-algebra. The associated \emph{algebraic crossed product} $A\rtimesalg\Gamma$ is the $\ast$\=/algebra whose elements are finite formal sums of the form
\(\sum_{\gamma\in\Gamma}(a_\gamma,\gamma)\)
with $a_\gamma\in A$; in the literature this algebra is often denoted by $C_c(\Gamma,A)$. The addition is performed component-wise, i.e.~$(a,\gamma)+(a',\gamma)\coloneqq(a+a',\gamma)$, the multiplication is given by $(a,\gamma)(a',\gamma')\coloneqq(a\alpha_\gamma(a'),\gamma\gamma')$, and the star operation is defined by $(a,\gamma)^*\coloneqq (\alpha_{\gamma^{-1}}(a^*),\gamma^{-1})$.

Non-degenerate $*$-representations of the algebraic crossed product correspond to non-degenerate \emph{covariant representations} of the pair $(A,\Gamma)$. A covariant representation $\pi$ of $(A,\Gamma)$ is a pair $(\pi_A,\pi_{\Gamma})$ consisting of a $*$-representation $\pi_A \colon A\to  B(\H)$ and a unitary representation $\pi_\Gamma \colon \Gamma\to U(\H)$ (on the same Hilbert space $\mathcal{H}$) satisfying
\begin{equation}\label{eq:covariant rep}
 \pi_A(\alpha_\gamma (a))=\pi_\Gamma(\gamma)\pi_A(a)\pi_\Gamma(\gamma^{-1})
\end{equation}
for all $a \in A$ and $\gamma \in \Gamma$. Explicitly, the $\ast$-representation associated with $(\pi_A,\pi_\Gamma)$ is given by
\[
\pi(a,\gamma)\coloneqq \pi_A(a)\pi_\Gamma(\gamma).
\]

Observe that $\pi_\Gamma(\Gamma)$ normalizes $\pi_A(A)$ and the image $\pi(A\rtimesalg\Gamma)\leq B(\CH)$ (where $\pi$ is the $\ast$\=/representation corresponding to $(\pi_A,\pi_{\Gamma})$) coincides with $\Gamma\cdot \pi_A(A)$. Conversely, given $A\leq B(\CH)$ and a unitary representation $\pi_\Gamma\colon \Gamma\to U(\CH)$ normalizing $A$, an element $\gamma\in\Gamma$ acts on $A$ via conjugation by $\pi_\Gamma(\gamma)$. We thus obtain an algebraic crossed product $A\rtimesalg \Gamma$ and a covariant representation $(\pi_A,\pi_\Gamma)$ such that the image under the corresponding $\ast$\=/representation is the $\ast$-algebra $\Gamma\cdot A$ (here $\pi_A$ is the inclusion).

Now, let $\alpha \colon \Gamma \curvearrowright (X,\mu)$ be a non-singular action on a measure space. Observe that the action $\Gamma\curvearrowright L^\infty X$ by precomposition given by $\gamma \cdot f(x)=f(\gamma^{-1}\cdot x)$ is an action by $\ast$-homomorphisms; we denote by $L^\infty X\rtimesalg\Gamma$ the associated algebraic crossed product.
For $f\in L^\infty X$, let $M_f\in  B(L^2X)$ denote the multiplication operator with symbol $f$. Then $M_\bullet \colon L^\infty X \to  B(L^2 X), \; f \mapsto M_f$ is a $*$-representation. The cocycle property \eqref{eq:cocycle} of the Radon--Nikodym derivative shows that $(M_\bullet, \pi_\alpha)$ is a covariant representation of the pair $(L^\infty X, \Gamma)$, where $\pi_\alpha$ is the Koopman representation (see \eqref{eq:koopman}). Namely,
\[
    M_{\gamma\cdot f} = \pi_\alpha(\gamma)M_f\pi_\alpha(\gamma^{-1}).
\]
The induced $\ast$\=/representation of the algebraic crossed product is the $\ast$\=/homomorphism
\[
\begin{tikzcd}[row sep=0]
    \varPhi\colon &[-3em]  L^\infty X\rtimesalg\Gamma \arrow[r] & B(L^2X) \\
                &(f,\gamma) \arrow[|->,r] & M_f\pi_\alpha(\gamma) 
\end{tikzcd}
\]
discussed in the introduction.

Finally, we recall that if $\Gamma$ is acting on a $C^*$-algebra $A$, the \emph{full crossed product} $A\rtimesmax \Gamma$ is defined as the completion of $A\rtimesalg \Gamma$ with respect to the norm
\[
  \norm{x}_{\rm max}\coloneqq \sup\braces{\norm{\pi(x)}\mid \pi\text{ non-degenerate covariant representation}}.
\]
This is a $C^*$-algebra with the universal property that every $\ast$-homomorphism $A\rtimesalg\Gamma \to C$ into some $C^*$-algebra $C$ uniquely extends to a $\ast$-homomorphism $A\rtimesmax\Gamma \to C$.

\section{PROOF OF THEOREM~\ref{thmA:finite propagation as crossed product}}\label{sec:main thm A}
Fix a non-singular action $\alpha\colon\Gamma\curvearrowright (X,\mu)$. (As specified in Section \ref{ssec:actions}, $X$ is a standard Borel space, $\mu$ is a $\sigma$\=/finite measure, and $\Gamma$ is a discrete countable group.) We need to show that $\varPhi\colon L^\infty X \rtimesalg \Gamma \to \fpstar{\Gamma\curvearrowright X}$ is surjective.

For technical reasons, it is convenient to reduce to the case of probability measures $\mu$. This can easily be done using the naturality of the Koopman representation. More precisely, if $\nu\sim\mu$ is an equivalent measure, we already observed (see Remark \ref{rmk:algebras invariant of measureclass}) that $\Ad(U_{\mu,\nu})$ gives a $\ast$\=/isomorphism between $\fpstar{\Gamma\curvearrowright(X,\mu)}$ and $\fpstar{\Gamma\curvearrowright (X,\nu)}$, where $U_{\mu,\nu}\colon L^2(X,\mu)\to L^2(X,\nu)$ is the unitary operator defined by pointwise multiplication by $(\frac{d\mu}{d\nu})^{\frac 12}$.
Note that $\Ad(U_{\mu,\nu})$ acts as the identity on $L^\infty X$, seen once as a $\ast$\=/subalgebra of $ B(L^2(X,\mu))$ and once of $ B(L^2(X,\nu))$.

Let $\pi_{\alpha,\mu}$ and $\pi_{\alpha, \nu}$ be the Koopman representations defined by the action $\alpha$ with respect to the measures $\mu$ and $\nu$, respectively, and let $\varPhi_\mu$ and $\varPhi_\nu$ be the induced $\ast$\=/homomorphisms of $L^\infty X\rtimesalg\Gamma$. It follows from  \eqref{eq:cocycle} that $\pi_{\alpha,\nu}=\Ad(U_{\mu,\nu})\circ\pi_{\alpha,\mu}$. As a consequence, the following diagram commutes:
\[
 \begin{tikzcd}
  & \fpstar{\Gamma\curvearrowright (X,\mu)} \arrow[dd, "\Ad(U_{\mu,\nu})","\cong"'] \phantom{.}\\
  L^\infty X\rtimesalg\Gamma \arrow[ur, "\varPhi_\mu"] \arrow[dr, "\varPhi_\nu",swap] & \\
  & \fpstar{\Gamma\curvearrowright (X,\nu)}.
 \end{tikzcd}
\]

This proves the following lemma.

\begin{lem}
 If $\mu$ and $\nu$ are equivalent $\sigma$-finite measures, then $\varPhi_\mu$ is surjective (resp.~a $\ast$\=/isomorphism) if and only if $\varPhi_\nu$ is surjective (resp.~a $\ast$\=/isomorphism).
\end{lem}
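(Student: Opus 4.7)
The entire content of this lemma is essentially packaged in the commutative diagram displayed immediately above its statement, so the plan is simply to make explicit why that diagram commutes and to read off the conclusion.

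The plan is as follows. First, I would verify commutativity of the triangle by checking it on a generator $(f,\gamma)\in L^\infty X\rtimesalg\Gamma$. By definition, $\varPhi_\mu(f,\gamma)=M_f\pi_{\alpha,\mu}(\gamma)$, and applying $\Ad(U_{\mu,\nu})$ gives
\[
U_{\mu,\nu}M_f\pi_{\alpha,\mu}(\gamma)U_{\mu,\nu}^*
= M_f\,U_{\mu,\nu}\pi_{\alpha,\mu}(\gamma)U_{\mu,\nu}^*,
\]
because $U_{\mu,\nu}$, being itself a multiplication operator (by $(d\mu/d\nu)^{1/2}$), commutes with $M_f$ and so $\Ad(U_{\mu,\nu})$ fixes $L^\infty X$ pointwise. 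Combined with the identity $\pi_{\alpha,\nu}=\Ad(U_{\mu,\nu})\circ\pi_{\alpha,\mu}$ (which is a short calculation from the Radon--Nikodym cocycle identity \eqref{eq:cocycle}, already asserted in the paragraph above the lemma), this yields $\Ad(U_{\mu,\nu})(\varPhi_\mu(f,\gamma))=M_f\pi_{\alpha,\nu}(\gamma)=\varPhi_\nu(f,\gamma)$, as required.

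Once the diagram commutes, the conclusion is immediate: since $\Ad(U_{\mu,\nu})$ is a $\ast$-isomorphism from $\fpstar{\Gamma\curvearrowright(X,\mu)}$ onto $\fpstar{\Gamma\curvearrowright(X,\nu)}$ (by Remark~\ref{rmk:algebras invariant of measureclass}), the maps $\varPhi_\mu$ and $\varPhi_\nu$ share image (up to this isomorphism) and have identical kernels. Hence one is surjective (resp.\ a $\ast$-isomorphism) if and only if the other is.

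There is no genuine obstacle here; the only subtlety is bookkeeping, namely being careful that $U_{\mu,\nu}$ is well-defined and unitary (which uses $\mu\sim\nu$ and $\sigma$-finiteness so that $d\mu/d\nu$ and its reciprocal are a.e.\ finite and positive), and that $\Ad(U_{\mu,\nu})$ indeed preserves dynamical propagation. Both points have already been established in Remark~\ref{rmk:algebras invariant of measureclass}, so in the end the lemma reduces to a one-line consequence of the commutative diagram.
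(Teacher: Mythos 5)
Your proposal is correct and follows exactly the paper's own route: the paper also derives the lemma from the commutativity of the displayed triangle, which it justifies by the identity $\pi_{\alpha,\nu}=\Ad(U_{\mu,\nu})\circ\pi_{\alpha,\mu}$ (from the cocycle relation) together with the observation that $\Ad(U_{\mu,\nu})$ fixes $L^\infty X$ pointwise. Your check on generators $(f,\gamma)$ just makes explicit what the paper leaves implicit, so there is nothing to add.
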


Every $\sigma$-finite measure is equivalent to a probability measure. Thus, upon passing to a probability measure $\nu$ equivalent to $\mu$, it suffices to prove Theorem~\ref{thmA:finite propagation as crossed product} for probability measures. Therefore, for the rest of this section, we will \textbf{assume that $\mu$ is a probability measure}. We first establish two lemmas.

\begin{notation}
    Given $S\subseteq \Gamma$, we set $S_\times\coloneqq S\smallsetminus \{e\}$.
\end{notation}

\begin{lemma}\label{lem:remove e from support}
Let $T\in\fpstar{\Gamma\curvearrowright X}$, and let $S\subseteq\Gamma$ be a finite set such that $T$ is supported on $S$. Then there exists $f\in L^\infty X$ such that $T-M_f$ is supported on $S_\times$.
\end{lemma}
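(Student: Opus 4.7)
The idea is to construct $f$ as a ``diagonal of $T$'', exploiting the fact that $L^\infty X$ is maximal abelian in $B(L^2X)$, that is $(L^\infty X)' = L^\infty X$. For each measurable $A \subseteq X$ I would consider the operator
\[
R_A := M_{\mathbbm{1}_{A \setminus S_\times A}}\, T\, M_{\mathbbm{1}_A} \in B(L^2 X).
\]
A direct unpacking of finite $\alpha$-propagation shows that $R_A$ leaves every $L^2 B$ invariant: if $\eta \in L^2 B$, then $T(\mathbbm{1}_A\eta)$ is supported on $S\cdot(A\cap B) \subseteq (A\cap B)\cup S_\times A$, so multiplying by $\mathbbm{1}_{A\setminus S_\times A}$ confines the support to $(A\cap B)\setminus S_\times A\subseteq B$. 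Hence $R_A$ commutes with every projection in $L^\infty X$, and since $L^\infty X$ is maximal abelian, $R_A = M_{h_A}$ for a unique $h_A\in L^\infty X$ supported on $A\setminus S_\times A$, with $\norm{h_A}_\infty\leq \norm{T}$.

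Next I would establish a consistency property: for any two measurable $A_1, A_2$, the functions $h_{A_1}$ and $h_{A_2}$ coincide almost everywhere on $(A_1\setminus S_\times A_1)\cap(A_2\setminus S_\times A_2)$. This is an immediate consequence of evaluating the identities $R_{A_i}(\mathbbm{1}_{A_1\cap A_2}) = h_{A_i}\mathbbm{1}_{A_1\cap A_2}$ (for $i=1,2$) and comparing the resulting expressions for $T(\mathbbm{1}_{A_1\cap A_2})$ on the overlap. Consistency then lets me glue the $h_A$'s into a single $f \in L^\infty X$: fixing a compatible Polish topology on $X$ with a countable open basis $\{U_n\}_{n\geq 1}$, set $f(x) := h_{U_n}(x)$ for the smallest $n$ such that $x\in U_n\setminus S_\times U_n$ (and $f(x):=0$ otherwise). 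The Hausdorff axiom guarantees that the sets $U_n\setminus S_\times U_n$ cover the \emph{free locus} $X_{\rm free} := X\setminus\bigcup_{s\in S_\times}\braces{y\in X: sy=y}$; on its complement we have complete freedom in setting $f$.

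Finally, I would verify that $T-M_f$ is supported on $S_\times$: for every $\eta\in L^2 B$, $(T-M_f)\eta$ must vanish off $S_\times\cdot B$. Outside $S\cdot B$ both $T\eta$ and $M_f\eta$ vanish, so there is nothing to check. The delicate region is $B\setminus S_\times B$, and here lies the key observation: \emph{this region is automatically contained in $X_{\rm free}$}. Indeed, if some $s\in S_\times$ fixed a point $y\in B$, then $s^{-1}y=y\in B$ would force $y\in sB\subseteq S_\times B$, a contradiction. Hence $f(y) = h_{U_n}(y)$ for some $n$, and applying consistency to $(U_n, B)$ yields $h_{U_n}(y) = h_B(y)$; combined with the defining identity for $R_B$ this gives $T\eta(y) = h_B(y)\eta(y) = f(y)\eta(y)$, as required. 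The main subtlety was the behaviour on fixed point sets, where $\pi_\alpha(s)$ collapses to the identity and is \emph{a priori} indistinguishable from the $e$-component; the observation that $B\setminus S_\times B$ automatically avoids such points is precisely what allows the lemma to hold even in the absence of an essential freeness hypothesis.
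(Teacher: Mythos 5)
Your argument is correct, and it takes a genuinely different route from the paper's. The paper also builds $f$ as a ``diagonal'' of $T$, but does so by fixing a cofinal ultrafilter on the directed set of countable measurable partitions of $X$, defining a complex measure $\nu(A)$ as the ultralimit of the sums $\sum_i\langle\mathbbm 1_{U_i\cap A},T\mathbbm 1_{U_i\cap A}\rangle$, and taking $f=\frac{d\nu}{d\mu}$; the support statement is then checked by testing against simple functions along partitions subordinate to their level sets. You instead observe that the corner $M_{\mathbbm 1_{A\smallsetminus S_\times A}}\,T\,M_{\mathbbm 1_A}$ preserves every $L^2B$, hence lies in $(L^\infty X)'=L^\infty X$, and you glue the resulting symbols $h_A$ over a countable basis of a compatible Polish topology. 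Your route avoids ultrafilters and the $\sigma$-additivity verification, at the cost of invoking maximal abelianness of $L^\infty X$ (which the paper uses anyway, in Proposition~\ref{prop:commutant of fpstar}) and of a choice of basis (the paper's Lemma~\ref{lem:essentially-free-action-countable-cover} makes a very similar topological selection); your closing observation that $B\smallsetminus S_\times B$ automatically avoids the fixed-point sets of elements of $S_\times$ is exactly why no freeness hypothesis is needed, a point the paper's proof handles only implicitly. Two small things to make explicit in a final write-up: the reduction to a probability measure (already performed in the paper before the lemma is stated) is what makes the indicator functions in your consistency step square-integrable, and the sets $s\cdot U_n$ are Borel because each $\alpha_s$ is a Borel automorphism of a standard Borel space.
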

\begin{proof}
Let $\mathcal U$ denote the set of all countable measurable partitions of $X$, i.e.~partitions $(U_i)_{i\in I}$ such that $I$ is countable and each $U_i$ is measurable. A partition $(V_j)_{j\in J}$ is said to be subordinate to $(U_i)_{i\in I}$ if there is a function $t\colon J\to I$ such that $V_j\subseteq U_{t(j)}$. When this is the case, we write $(V_j)_{j\in J} \succeq (U_i)_{i\in I}$.
This relation defines a partial order on $\mathcal U$. Considering the intersection $(V_j)_{j\in J} \wedge (V'_{j'})_{j'\in J'} := (V_j\cap V'_{j'})_{(j,j')\in J\times J'}$ shows that $(\CU,\preceq)$ is a directed set.

Fix a cofinal ultrafilter on $\mathcal U$.
Let $A\subseteq X$ be measurable and consider the map $\mathcal U \to\C$ defined by
\begin{align*}
    (U_i)_{i\in I}&\mapsto\sum_{i\in I}\langle\mathbbm 1_{U_i\cap A},T\mathbbm 1_{U_i\cap A}\rangle.
\end{align*}
Note that $\abs{\langle\mathbbm 1_{U_i\cap A},T\mathbbm 1_{U_i\cap A}\rangle}\leq\norm T\mu(U_i\cap A)$. Therefore the above sum converges, and the absolute value of the sum is at most $\norm T\mu(A)$.
Hence we can use the cofinal ultrafilter to define the ultralimit
\[
\nu(A)\coloneqq \lim_{(U_i)_{i\in I}\in\mathcal U}\sum_{i\in I}\langle\mathbbm 1_{U_i\cap A},T\mathbbm 1_{U_i\cap A}\rangle.
\]

We claim that the function $\nu$ is $\sigma$-additive: Indeed, if $(A_k)_{k\in K}$ is some countable collection of pairwise disjoint measurable sets, then $\nu(\bigcup_{k\in K}A_k)=\sum_{k\in K}\nu(A_k)$.
This can be seen by taking the ultralimit along the set of measurable partitions of $X$ that are subordinate to the partition $(A_k)_{k\in K}\cup\{X\smallsetminus\bigcup_{k\in K}A_k\}$. This shows that $\nu\colon \{\text{Borel sets}\}\to \C$ defines a complex-valued measure.

For all measurable $A$ with $\mu(A)=0$ we also have $\nu(A)=0$. We may then define $f$ to be the Radon-Nikodym derivative $f=\frac{d\nu}{d\mu}$. For every measurable set $A$ we obtain
\[\lim_{(U_i)_{i\in I}\in\mathcal U}\sum_{i\in I}\langle\mathbbm 1_{U_i\cap A},(T-M_f)\mathbbm 1_{U_i\cap A}\rangle=0.\]

Now let $A\subseteq X$ be measurable and $\eta\in L^2A$.
We know that $(T-M_f)\eta$ is supported on $S\cdot A$, and we will show that it is supported on $S_\times \cdot A$.
For this, it is sufficient to show that $\langle\xi,(T-M_f)\eta\rangle=0$ for all $\xi\in L^2X$ supported on $A\smallsetminus\bigparen{S_\times \cdot A}$. By density, it in turn suffices to show this for simple functions $\xi$ and $\eta$.

Let $\xi$ and $\eta$ be simple functions.
Then the map $\xi\times\eta:X\to\C^2$ has finite range $R$.
Let $(U_i)_{i\in I}$ be a countable measurable partition subordinate to $((\xi\times\eta)^{-1}(r))_{r\in R}$.
Then we have
\[\sum_{i\in I}\langle\xi_{|U_i},(T-M_f)\eta_{|U_i}\rangle=\sum_{r=(r_1,r_2)\in R}\bar r_1r_2\sum_{U_i\subseteq(\xi\times\eta)^{-1}(r)}\langle\mathbbm1_{U_i},(T-M_f)\mathbbm1_{U_i}\rangle.\]
Taking the limit gives
\[\lim_{(U_i)_{i\in I}\in\mathcal U}\sum_{i\in I}\langle\xi_{|U_i},(T-M_f)\eta_{|U_i}\rangle=0.\]

On the other hand, note that $(T-M_f)\eta_{|U_i}$ is supported on $S\cdot U_i$, hence
\[
 \langle\xi,(T-M_f)\eta_{|U_i}\rangle=\langle\xi_{|S\cdot U_i},(T-M_f)\eta_{|U_i}\rangle.
\]
Moreover, since $\xi$ is supported on $A\smallsetminus\bigpar{S_\times \cdot A}$, if $U_i$ is a subset of $A$ then the above is equal to $\langle\xi_{|U_i},(T-M_f)\eta_{|U_i}\rangle$.
Since $\eta$ is supported in $A$, we conclude that for every partition $(U_i)_{i\in I}$ subordinate to $((\xi\times\eta)^{-1}(r))_{r\in R}$, we have
\[\langle\xi,(T-M_f)\eta\rangle
=\sum_{\substack{i\in I \\ U_i\subseteq A}}\langle\xi,(T-M_f)\eta_{|U_i}\rangle=\sum_{i\in I}\langle\xi_{|U_i},(T-M_f)\eta_{|U_i}\rangle.\]
Taking the limit gives $\langle\xi,(T-M_f)\eta\rangle=0$ by our choice of $f$.
\end{proof}

\begin{lemma}\label{lem:essentially-free-action-countable-cover}
Let $g\colon X\to X$ be a measurable map such that $g(x) \neq x$ for $\mu$-almost all $x\in X$.
Then there are countably many measurable sets $U_n\subseteq X$ such that the sets $U_n\smallsetminus g^{-1}(U_n)$ cover $X$ up to $\mu$-measure zero.
\end{lemma}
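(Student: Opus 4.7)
The plan is to exploit the standard Borel structure of $X$ to produce a countable separating family of measurable sets, from which the desired cover follows essentially for free. The lemma is not asking for any delicate measure-theoretic construction: the only information we get from the hypothesis is that the co-null set $X_0\coloneqq\{x\in X\mid g(x)\neq x\}$ consists of points which must be ``measurably distinguished'' from their images, so it is natural to look for a family of sets that distinguishes points in a uniform way.

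Concretely, since $X$ is a standard Borel space, it admits a countable family $\{V_k\}_{k\in\NN}$ of measurable subsets that \emph{separates points}: for any two distinct $x,y\in X$ there is some $k$ with $\mathbbm{1}_{V_k}(x)\neq \mathbbm{1}_{V_k}(y)$. A concrete way to produce such a family is to fix a Polish topology compatible with the Borel structure and take $\{V_k\}$ to be a countable basis; separation is then immediate from the Hausdorff property together with second countability.

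Having fixed $\{V_k\}$, I would let $(U_n)_{n\in\NN}$ enumerate $\{V_k\mid k\in\NN\}\cup\{X\smallsetminus V_k\mid k\in\NN\}$, and claim that the sets $U_n\smallsetminus g^{-1}(U_n)$ cover $X_0$. Indeed, for any $x\in X_0$, the separation property applied to the distinct points $x$ and $g(x)$ produces some $k$ for which $\mathbbm{1}_{V_k}(x)\neq \mathbbm{1}_{V_k}(g(x))$; taking $U$ to be whichever of $V_k$ or $X\smallsetminus V_k$ contains $x$ (and hence not $g(x)$), we obtain $x\in U$ and $g(x)\notin U$, i.e.\ $x\in U\smallsetminus g^{-1}(U)$. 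Since $U$ appears among the $U_n$, the claim follows.

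There is essentially no serious obstacle here: the content of the lemma reduces to the standard fact that the Borel $\sigma$\=/algebra of a standard Borel space admits a countable separating family, which is simply second countability of Polish spaces phrased measure-theoretically. The remaining verification is a one-line set-theoretic computation, and no measure-theoretic approximation or estimate is required.
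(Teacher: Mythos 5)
Your proof is correct, but it takes a genuinely different and in fact more elementary route than the paper. The paper fixes a compatible complete separable metric $d$, considers the nested sets $A_\epsilon=\{x\mid d(g(x),x)<\epsilon\}$ (whose intersection is null), covers $X$ by countably many sets of diameter at most $\epsilon$ with $\mu(A_\epsilon)<1/N$, and observes that the points not covered by the resulting differences lie in $A_\epsilon$; letting $N\to\infty$ gives a cover up to measure zero. Your argument replaces this quantitative approximation by the purely Borel-theoretic observation that a standard Borel space carries a countable point-separating family $\{V_k\}$ (a countable basis of a compatible Polish topology), and that for each $x$ with $g(x)\neq x$ one of $V_k$ or $X\smallsetminus V_k$ contains $x$ but not $g(x)$, i.e.\ $x\in U\smallsetminus g^{-1}(U)$ for $U$ in the doubled family. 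This buys two small improvements: no measure estimates are needed at all (in particular your argument is indifferent to whether $\mu$ is finite, whereas the paper's continuity-from-above step implicitly uses the reduction to a probability measure made earlier in that section), and your countable family covers the co-null set $\{x\mid g(x)\neq x\}$ pointwise rather than merely up to measure zero. Both proofs ultimately rest on second countability of the Polish topology; yours just uses it qualitatively. The one point worth stating explicitly is the measurability of $U_n\smallsetminus g^{-1}(U_n)$, which is immediate since $g$ is measurable.
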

\begin{proof}
Since $X$ is a standard Borel space, we may choose a complete separable metric $d$ on it in such a way that the $\sigma$-algebra on $X$ equals the Borel $\sigma$-algebra of $d$.
For $\epsilon>0$, let $A_\epsilon = \{x\in X\mid d(g(x),x)<\epsilon\}$. These sets are nested, measurable, and their intersection has $\mu$-measure zero by hypothesis.

For every $N\in\NN$, we may choose $\varepsilon$ small enough so that $\mu(A_\epsilon)<\frac 1N$. By separability, we may cover $X$ with countably many measurable sets $U_k^N$ (indexed by $k$) of diameter at most $\epsilon$.
Then we have
\[
 \bigcup_k \Big(U_k^N \cap g^{-1}(U_k^N)\Big)  \subseteq A_\epsilon.
\]
Since
\[
 X=\bigcup_k U_k^N = \Big(\bigcup_k U_k^N \smallsetminus g^{-1}(U_k^N)\Big)\cup\Big(\bigcup_k U_k^N \cap g^{-1}(U_k^N)\Big),
\]
the sets $U_k^N\smallsetminus g^{-1}(U_k^N)$ cover $X$ up to measure $1/N$. Doing this for all $N$ and taking all the (still countably many) $U_k^N$ together, we see that $U_k^N\smallsetminus g^{-1}(U_k^N)$ cover $X$ up to a set of measure zero.
\end{proof}

\begin{proof}[Proof of Theorem \ref{thmA:finite propagation as crossed product}]
To prove surjectivity, fix $T\in\fpstar{\Gamma\curvearrowright X}$. Let $S\subseteq\Gamma$ be finite such that $T$ is supported on $S$.
We show that $T$ is in the image of $\varPhi$ by induction on the size of $S$.

If $S$ is empty, we have $T=0$ and we are done. Now, suppose that $S$ is not empty and pick $s\in S$. For every $\eta$ supported in $A$, the function $\pi_\alpha(s^{-1})T\eta$ is supported on $s^{-1}\cdot (S\cdot A)= (s^{-1}S)\cdot A$. Applying Lemma~\ref{lem:remove e from support} to the operator $\pi_\alpha\paren{s^{-1}}T$, we obtain an $f\in L^\infty X$ such that for every measurable set $A$ and $\eta\in L^2A$, the function $(\pi_\alpha(s^{-1})T-M_f)\eta$ is supported on $(s^{-1}S\smallsetminus\{e\})\cdot A$.
Then $(T-\pi_\alpha(s)M_f)\eta$ is supported on $(S\smallsetminus\{s\})\cdot A$, so $T-\pi_\alpha(s)M_f$ is supported on $S\smallsetminus\{s\}$.
By the induction hypothesis, $T-\pi_\alpha(s)M_f$ is in the image of $\varPhi$, and so is $\pi_\alpha(s)M_f=M_{s^{-1}f}\pi_\alpha(s)$. Hence, $T$ is also in the image of $\varPhi$, completing the induction argument.

Now suppose that the action is essentially free and pick $T=\sum_{s\in S}(f_s,s)$ in the kernel of $\varPhi$, with $S\subseteq\Gamma$ finite. By Lemma \ref{lem:essentially-free-action-countable-cover}, for each $s\in S_\times $, there is a countable collection $(U_{n}^{(s)})$ such that the $U_{n}^{(s)}\smallsetminus s\cdot U_{n}^{(s)}$ cover $X$ up to a set of measure zero. 
Since $S$ is finite, the set $I=\NN^{S_\times}$ is countable. For $i\in I$, let 
\[
    U_i \coloneqq \bigcap_{s\in S_\times }U_{i(s)}^{(s)}. 
\]
We claim that the countably many sets $U_i\smallsetminus\bigpar{S_\times \cdot U_i}$ cover $X$ up to a set of measure zero.

In fact, for a.e.\ $x\in X$ there is an $i\in I$ such that $x\in U_{i(s)}^{(s)}\smallsetminus s\cdot U_{i(s)}^{(s)}$ for every $s\in S_\times$. In particular, $x\in U_i$. Then we are done because
\[
    S_\times \cdot U_i\subseteq \bigcup_{s\in S_\times} s\cdot U_{i(s)}^{(s)}
\]
does not contain $x$.

Now for every $\xi\in L^2X$ supported on $U_i\smallsetminus\bigpar{S_\times \cdot U_i}$, we have
\[
0=\langle\xi,\varPhi(T)\mathbbm 1_{U_i}\rangle
=\langle\xi,\sum_{s\in S}f_s\pi_\alpha(s)(\mathbbm 1_{U_i})\rangle
=\langle\xi,f_e\pi_\alpha(e)(\mathbbm 1_{U_i})\rangle
=\langle\xi,f_e\mathbbm 1_{U_i}\rangle.
\]
Hence, $f_e$ is $0$ on $U_i\smallsetminus\bigparen{S_\times \cdot U_i}$ for all $i$. Therefore, $f_e=0$.

For every $s\in S$, we also have that $\pi_\alpha\paren{s^{-1}}T=\sum_{s'\in S}(s^{-1}\cdot f_{s'},s^{-1}s')$ is an element of the kernel of $\varPhi$, so $s^{-1}\cdot f_s=0$.
This shows that $T$ is the trivial element in $L^\infty X\rtimesalg\Gamma$, i.e.~$\varPhi$ is injective and hence a $\ast$\=/isomorphism.
\end{proof}

Now that Theorem \ref{thmA:finite propagation as crossed product} has been proved, the following simple consequence is all we need to deduce the operator-algebraic characterizations of ergodicity and strong ergodicity stated as Corollary \ref{corA:ergodicity} and \ref{corA:strong ergodicity}.

\begin{prop}\label{prop:commutant of fpstar}
The commutant $\fpstar{\Gamma\curvearrowright X}'$ coincides with the space $(L^\infty X)^\Gamma$ of $\Gamma$\=/invariant bounded functions.
\end{prop}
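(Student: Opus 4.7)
The strategy is to combine Theorem~\ref{thmA:finite propagation as crossed product} with the classical fact that $L^\infty X \leq B(L^2 X)$ is a maximal abelian subalgebra, and then to read off invariance from the covariance relation satisfied by the Koopman representation.

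More precisely, by the surjectivity part of Theorem~\ref{thmA:finite propagation as crossed product}, every element of $\fpstar{\Gamma\curvearrowright X}$ is a finite linear combination of operators of the form $M_f \pi_\alpha(\gamma)$ with $f \in L^\infty X$ and $\gamma \in \Gamma$. Hence the commutant $\fpstar{\Gamma\curvearrowright X}'$ equals the intersection $(L^\infty X)' \cap \pi_\alpha(\Gamma)'$ of the commutants of these two generating families. I would then invoke the classical fact that, for a standard $\sigma$-finite measure space, $L^\infty X$ acts as a maximal abelian von Neumann subalgebra of $B(L^2 X)$, so $(L^\infty X)' = L^\infty X$. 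This immediately shows $\fpstar{\Gamma\curvearrowright X}' \subseteq L^\infty X$.

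It remains to identify which $f \in L^\infty X$ commute with all $\pi_\alpha(\gamma)$. The covariance identity $\pi_\alpha(\gamma) M_f \pi_\alpha(\gamma)^{-1} = M_{\gamma \cdot f}$ from Section~\ref{ssec:crossed products} says that $M_f \in \pi_\alpha(\Gamma)'$ if and only if $M_{\gamma\cdot f} = M_f$ for every $\gamma\in\Gamma$, i.e.\ $\gamma\cdot f = f$ in $L^\infty X$ for every $\gamma\in\Gamma$. This is exactly the statement that $f \in (L^\infty X)^\Gamma$. The converse inclusion is automatic: any $\Gamma$-invariant $f$ commutes both with $L^\infty X$ (which is abelian) and with every $\pi_\alpha(\gamma)$, hence with the generating set of $\fpstar{\Gamma\curvearrowright X}$.

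There is no real obstacle here; the only subtlety is ensuring that the MASA fact $(L^\infty X)' = L^\infty X$ is applicable in our generality. Since $(X,\mu)$ was assumed to be a standard $\sigma$-finite measure space (Section~\ref{ssec:actions}), this is standard, so the argument above is complete.
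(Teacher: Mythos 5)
Your proof is correct and follows essentially the same route as the paper's: both reduce the hard inclusion to the maximal-abelian-ness of $L^\infty X$ in $B(L^2X)$ (so that the commutant lands inside $L^\infty X$) and then use the covariance relation $\pi_\alpha(\gamma)M_f\pi_\alpha(\gamma)^{-1}=M_{\gamma\cdot f}$ to identify the $\Gamma$-invariant functions, with the reverse inclusion being immediate from $\fpstar{\Gamma\curvearrowright X}=\Gamma\cdot L^\infty X$. No gaps; nothing further is needed.
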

\begin{proof}
 Since $\fpstar{\Gamma\curvearrowright X}=\Gamma\cdot L^\infty X$, it is clear that $(L^\infty X)^\Gamma$ is contained in the commutant.

 For the converse containment, note that the commutant of $\fpstar{\Gamma\curvearrowright X}$ is a $\ast$\=/subalgebra of $L^\infty X$, because $(L^\infty X)'=L^\infty X$ (see e.g.~\cite[Section 4]{murphy2014c}). Let $f\in L^\infty X$ be in $\fpstar{\Gamma\curvearrowright X}'$. Then $M_f$ must commute with $\pi_\alpha(\gamma)$ for every $\gamma\in \Gamma$. This in turn implies that
 \[
  M_f=\pi_\alpha(\gamma)M_f\pi_\alpha(\gamma)^{-1}=M_{\gamma \cdot f},
 \]
 and hence $f=\gamma\cdot f$.
\end{proof}

\section{PRELIMINARIES: ROE ALGEBRAS AND WARPED SPACES}\label{sec:prelims of roe algebras and warped metrics}

\subsection{Roe algebras} \label{ssec:roe algebras}
We recall some definitions and facts regarding Roe algebras of metric spaces, many of which essentially go back to \cite{RoeIndexI,RoeIndexII,RoeMAMS,Roe1995}. We refer to \cite{willett2020higher} for details and proofs.

Let $(Y,d)$ be a proper metric space and $C_0(Y)$ the $C^*$-algebra of complex-valued continuous functions vanishing at infinity.

\begin{definition}
A \emph{(geometric) $Y$-module} is a separable Hilbert space $\H_Y$ together with a non\=/degenerate $*$\=/representation $\rho\colon C_0(Y)\to  B(\H_Y)$.
A $Y$\=/module is \emph{ample} if every non\=/zero $f\in C_0(Y)$ is mapped to a non\=/compact operator.
\end{definition}

Let $\H_Y$ be a $Y$-module. An operator $T\in B(\H_Y)$ has \emph{finite propagation} if there exists $R\geq 0$ such that for all $f,g\in C_0(Y)$ with $d(\supp(f),\supp(g)) >R$, we have $\rho(f) T\rho(g)=0$. It can be shown that finite propagation is preserved under addition and composition of operators. It follows that the set of operators of finite propagation is a $*$-subalgebra of $ B(\H_Y)$, which we denote by $\fpstar{\CH_Y}$.
The \emph{$C^*$-algebra of operators with finite propagation} is the norm closure $\fpcstar{\CH_Y}$ of $\fpstar{\CH_Y}$ in $ B(\H_Y)$.

\begin{rmk}\label{rmk:alpha_propagation}
Every countable discrete group can be given a proper left-invariant metric $d_\Gamma$ (such a metric is uniquely defined up to coarse equivalence). An action $\alpha\colon\Gamma\curvearrowright X$ can be used to define an extended metric $d_{\rm orb}$ on $X$ 
declaring the distance between two points in the same $\Gamma$-orbit to be the shortest length of a group element mapping one to the other. The distance between disjoint orbits is set to be $+\infty$. An operator $T\in B(L^2X)$ has finite dynamical propagation if and only if it has finite propagation with respect to the extended metric $d_{\rm orb}$. This justifies our choice of notation for $\fpstar{\Gamma\curvearrowright X}$. Note, however, that it is somewhat improper to talk about the geometric propagation of $T$, because $L^2X$ is not a geometric module for the extended metric space $Y\coloneqq (X,d_{\rm orb})$: 
The algebra $C_0(Y)$ does not have a good $\ast$-representation on $L^2X$ (a compactly supported continuous function is supported on finitely many $\Gamma$\=/orbits, so pointwise multiplication yields the $0$ representation whenever $\mu$ is non-atomic).
\end{rmk}

For a topological space $Y$, let $C_c(Y)$ be the algebra of compactly supported continuous functions on $Y$. An operator $T\in B(\H_Y)$ is \emph{locally compact} if for every $f\in C_c(Y)$ the operators $\rho(f)T$ and $T\rho(f)$ are compact. The set $C_{lc}^*(\CH_Y)$ of (bounded) locally compact operators is a $C^*$-subalgebra of $B(\H_Y)$.

\begin{definition}
 The \emph{Roe $*$-algebra $\roestar{\H_Y}$} of a $Y$-module $\H_Y$ is the $*$-algebra of all locally compact operators with finite propagation:
 \[
    \roestar{\H_Y} = \fpstar{\H_Y}\cap C_{lc}^*(\CH_Y).
 \]
 The \emph{Roe algebra} of $\H_Y$ is defined as the closure $\roecstar{\CH_Y}\coloneqq\overline{\roestar{\H_Y}}^{\,\norm{\cdot}}$ in the norm topology.
 The \emph{Roe algebra $\roecstar{Y}$} of the metric space $Y$ is the Roe algebra of any of its ample geometric modules; up to $\ast$-isomorphism, $\roecstar{Y}$ does not depend on the choice of ample module.
\end{definition}

\begin{lem}\label{lem:roe alg is ideal}
 The Roe $*$-algebra $\roestar{\CH_Y}$ is a two-sided ideal in $\fpstar{\CH_Y}$.
\end{lem}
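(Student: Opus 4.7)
The plan is to show that for any $T\in\roestar{\CH_Y}$ and $S\in\fpstar{\CH_Y}$, both products $ST$ and $TS$ are locally compact operators of finite propagation, hence lie in $\roestar{\CH_Y}$. Finite propagation of $ST$ and $TS$ is immediate from the already-stated fact that finite propagation is preserved under composition (with propagation bounded by the sum of those of the factors), so the real content is in showing local compactness of the products.

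Fix $f\in C_c(Y)$. Two of the four required compactness statements are easy: since $T\rho(f)$ and $\rho(f)T$ are compact (as $T$ is locally compact), the products $ST\rho(f)=S\bigl(T\rho(f)\bigr)$ and $\rho(f)TS=\bigl(\rho(f)T\bigr)S$ are compact because the compact operators form a two-sided ideal of $B(\CH_Y)$. The two remaining products are $\rho(f)ST$ and $TS\rho(f)$. These are genuinely harder because neither $\rho(f)S$ nor $S\rho(f)$ need be compact.

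The key step will be a cut-off lemma: if $S$ has finite propagation $R$ and $f\in C_c(Y)$, then there exists $g\in C_c(Y)$ with $\rho(f)S=\rho(f)S\,\rho(g)$ (and symmetrically $S\rho(f)=\rho(g)S\rho(f)$ for some such $g$). To prove it, I would pick $g\in C_c(Y)$ with $g\equiv 1$ on the closed $R'$-neighbourhood $\overline{N_{R'}(\supp f)}$ for some $R'>R$ (possible since $Y$ is proper, by Urysohn); then $\supp((1-g)h)$ is at distance $>R$ from $\supp f$ for every $h\in C_c(Y)$, so by finite propagation $\rho(f)S\rho((1-g)h)=0$. Taking $h=e_\alpha$ from an approximate unit for $C_0(Y)$ in $C_c(Y)$, non-degeneracy of $\rho$ gives $\rho(f)S=\lim_\alpha\rho(f)S\rho(e_\alpha)=\lim_\alpha\rho(f)S\rho(g\,e_\alpha)$ in the strong operator topology; since $g\in C_c(Y)$, the functions $g\,e_\alpha\to g$ uniformly, so $\rho(g\,e_\alpha)\to\rho(g)$ in norm, yielding the claim.

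With the cut-off lemma in hand, the rest is easy: $\rho(f)ST=\rho(f)S\,\rho(g)\,T$, and $\rho(g)T$ is compact by local compactness of $T$, so the product is compact; the analogous manipulation handles $TS\rho(f)$. This completes the proof that $ST,TS\in\roestar{\CH_Y}$. The one step that requires care is the cut-off lemma, since the natural attempt ``$\rho(f)S(I-\rho(g))=0$'' is not literally an instance of finite propagation (as $I-\rho(g)$ is not the image under $\rho$ of a function vanishing outside a set); the approximate-unit and uniform-convergence argument above is the cleanest way I see to get around this.
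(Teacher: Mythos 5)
Your proposal is correct and follows essentially the same route as the paper: finite propagation of the products is immediate, two of the four local-compactness checks are trivial, and the remaining two are handled by cutting off $S$ with a compactly supported function equal to $1$ on an $R'$-neighbourhood of $\supp f$ (which is compact by properness of $Y$). Your approximate-unit argument for the cut-off identity $\rho(f)S=\rho(f)S\rho(g)$ is a careful justification of a step the paper simply asserts, so if anything your write-up is slightly more complete.
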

\begin{proof}
 It suffices to show that if $T\in B(\CH_Y)$ is locally compact and $V\in\fpstar{\CH_Y}$, then $TV$ and $VT$ are locally compact. Fix any $f\in C_c(Y)$. The operator $\rho(f)TV$ is the composition of a bounded and a compact operator, hence compact. To show that $TV\rho(f)$ is compact, we use that $V$ has finite propagation (and $Y$ is a proper metric space) to deduce that there is a compact $Q\subseteq Y$ so that $\rho(g)V\rho(f)=0$ whenever $g$ is supported on the complement of $Q$. Let $h\in C_c(Y)$ be so that $h$ is identically $1$ on $Q$. Then $\rho(h)V\rho(f)=V\rho(f)$, and hence
 $TV\rho(f)= T\rho(h)V\rho(f)$ is again the composition of a bounded and a compact operator. An analogous argument applies to $VT$ as well.
\end{proof}

\begin{cor}
 The Roe algebra $\roecstar{\CH_Y}$ is a closed two-sided ideal in $\fpcstar{\CH_Y}$.
\end{cor}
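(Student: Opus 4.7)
The statement is a direct corollary of Lemma~\ref{lem:roe alg is ideal}, obtained simply by passing to norm closures, so the proof plan is short. The key point is that multiplication is jointly norm-continuous on norm-bounded subsets of $B(\CH_Y)$, which allows one to promote the algebraic ideal property to the $C^*$-completions.

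First I would note that $\roecstar{\CH_Y}$ is closed by definition (it is the norm closure of $\roestar{\CH_Y}$), so only the ideal property needs to be checked. Fix $T\in \roecstar{\CH_Y}$ and $V\in\fpcstar{\CH_Y}$; I must show that both $TV$ and $VT$ lie in $\roecstar{\CH_Y}$. By density, choose sequences $T_n\in \roestar{\CH_Y}$ and $V_n\in\fpstar{\CH_Y}$ with $T_n\to T$ and $V_n\to V$ in norm. By Lemma~\ref{lem:roe alg is ideal}, $T_nV_n\in \roestar{\CH_Y}$ and $V_nT_n\in \roestar{\CH_Y}$ for every $n$.

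Next I would use the standard estimate
\[
\norm{T_nV_n-TV}\leq \norm{T_n-T}\norm{V_n}+\norm{T}\norm{V_n-V},
\]
together with the boundedness of $\norm{V_n}$, to conclude that $T_nV_n\to TV$ in norm, so $TV\in \roecstar{\CH_Y}$; the same argument gives $VT\in \roecstar{\CH_Y}$. Hence $\roecstar{\CH_Y}$ is a two-sided ideal in $\fpcstar{\CH_Y}$.

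There is no real obstacle here: the only thing to be careful about is making sure one cites Lemma~\ref{lem:roe alg is ideal} for the purely algebraic ideal property before taking closures, rather than trying to reprove local compactness directly for limits (which would amount to re-doing the proof of the lemma). Everything else is the routine continuity-of-multiplication argument used throughout the theory of $C^*$-ideals.
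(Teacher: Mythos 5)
Your proof is correct and is exactly the routine closure argument the paper intends: the corollary is stated without proof as an immediate consequence of Lemma~\ref{lem:roe alg is ideal}, obtained by norm-continuity of multiplication. Nothing is missing.
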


One useful feature of geometric modules is that they extend nicely to measurable functions. Indeed, every non-degenerate $\ast$-representation $\rho \colon C_0(Y)\to B(\H_Y)$ has a canonical extension $\rho \colon B_b(Y)\to B(\H_Y)$, where $B_b(Y)$ is the $C^*$\=/algebra of bounded Borel functions on $Y$. In particular, for every measurable set $A\subseteq Y$, the indicator function $\mathbbm 1_{A}\in B_b(Y)$ is sent to a projection $\chi_{A}\in B(\H_Y)$. Furthermore, if $(A_i)_{i\in \NN}$ is a measurable partition of $Y$, then the finite sums $\sum_{i=1}^N\chi_{A_i}\in B(\H_Y)$ converge in the strong operator topology to the identity operator $1\in B(\H_Y)$ as $N$ tends to infinity \cite[Proposition 1.6.11]{willett2020higher}.
\\

Roe algebras are particularly well behaved and most interesting for metric spaces with bounded (coarse) geometry. A metric space $(Y,d)$ has \emph{bounded geometry} if there exists a \emph{gauge} $\varrho>0$ such that for every $R > 0$ there is $N(R) \in \NN$ such that every ball of radius $R$ in $Y$ is contained in a union of $N(R)$ balls of radius $\varrho$. The following lemma will be needed later.

\begin{lem}\label{lem:partitioning bded geometry spaces} 
 Let $(Y,d)$ be a metric space with bounded geometry, and let $\varrho>0$ be a gauge realizing the bounded geometry condition. For every $R>0$, there is a finite partition $Y=\bigsqcup_{i=1}^nY_i$ and further subpartitions $Y_i=\bigsqcup_{z\in Z_i} U_{z,i}$ such that
 \begin{itemize}
  \item $Z_i$ is countable for every $i$;
  \item each $U_{z,i}$ is a Borel set and $\diam(U_{z,i})\leq 4\varrho$;
  \item $d(U_{z,i},U_{z',i})>R$ for every $z,z'\in Z_i$ with $z \neq z'$.
 \end{itemize}
\end{lem}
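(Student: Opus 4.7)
My plan is to construct the partition in two steps: first, I would build a countable Borel partition of $Y$ into pieces of diameter at most $4\varrho$, and then regroup the pieces into finitely many classes so that within each class any two pieces are more than $R$ apart.

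For the first step, I would choose a maximal subset $Z\subseteq Y$ with $d(z,z')\geq 2\varrho$ for all distinct $z,z'\in Z$ (using Zorn's lemma or simply countable greedy choice, noting that $Y$ is separable because it is proper and hence a countable union of compact sets). By maximality, the open balls $B(z,2\varrho)$ cover $Y$. Enumerate $Z=\{z_1,z_2,\ldots\}$ and set
\[
    V_{z_k}\coloneqq B(z_k,2\varrho)\smallsetminus \bigcup_{j<k}B(z_j,2\varrho).
\]
These are Borel sets of diameter at most $4\varrho$ that partition $Y$.

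For the second step, I would build an intersection graph $G$ on the vertex set $Z$, declaring $z\sim z'$ if and only if $d(V_z,V_{z'})\leq R$. If $z\sim z'$, then picking points in the respective sets and using the triangle inequality yields $d(z,z')< R+4\varrho$, so the neighbours of any $z_0\in Z$ lie in $B(z_0, R+4\varrho)\cap Z$. By bounded geometry, $B(z_0,R+4\varrho)$ is contained in a union of $N(R+4\varrho)$ balls of radius $\varrho$, and each such ball can contain at most one point of $Z$ (because any two points of $Z$ in such a ball would be within $2\varrho$ of each other, contradicting the separation). Hence $G$ has maximum degree at most $N(R+4\varrho)$, and by a standard greedy argument it admits a proper colouring with $n\coloneqq N(R+4\varrho)+1$ colours. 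Let $Z_1,\ldots,Z_n$ be the colour classes and set
\[
    Y_i\coloneqq \bigsqcup_{z\in Z_i}V_z,\qquad U_{z,i}\coloneqq V_z \text{ for } z\in Z_i.
\]

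All the required properties are then automatic: the $Z_i$ are countable subsets of the countable set $Z$; each $U_{z,i}=V_z$ is Borel of diameter at most $4\varrho$ by construction; and for distinct $z,z'\in Z_i$ the edge $z\sim z'$ is forbidden in $G$, which precisely means $d(U_{z,i},U_{z',i})>R$. I do not anticipate any substantial obstacle, as the argument follows the classical scheme used in coarse geometry to produce Borel decompositions of bounded-geometry spaces; the only point requiring a little care is the verification that the greedy colouring works, which is immediate from the degree bound provided by the gauge $\varrho$.
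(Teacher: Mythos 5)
Your proof is correct and takes essentially the same route as the paper's: a maximal $2\varrho$-separated net $Z$, disjointified balls of radius $2\varrho$ to get the Borel pieces of diameter $\leq 4\varrho$, and a greedy $(D+1)$-colouring of the graph whose edges join pieces at distance $\leq R$. The only (cosmetic) differences are that you justify the countability of $Z$ via separability rather than directly from the bounded geometry condition, and you make explicit the degree bound $N(R+4\varrho)$ that the paper leaves implicit.
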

\begin{proof}
Choose a maximal $2\varrho$-separated subset $Z\subseteq Y$. Observe that as $z\in Z$ varies, the (open) balls $B_Y(z;2\varrho)$ cover $Y$. Since $Z$ is $2\varrho$-separated, any two $z\neq z'\in Z$ cannot belong to a common ball of radius $\varrho$. It then follows from the bounded geometry assumption that $Z$ is countable, so we may enumerate it: $Z=\{z_0,z_1,\ldots\}$. Let $U_{z_k}\coloneqq B_Y(z_k;2\varrho)\smallsetminus(\bigcup_{j<k}U_{z_j})$. Note that this defines a measurable partition of $Y$.

Consider the graph with vertices $Z$ and an edge between $z_j$ and $z_k$ if and only if $d(U_{z_j},U_{z_k})\leq R$. The bounded geometry condition implies that this graph has bounded degree, say $D$. We may now choose a $(D+1)$-coloring of $Z$ in such a way that no two vertices of the same colour are joined by an edge. Taking $I$ to be the set of colors, $Z_i\subseteq Z$ the set of vertices of color $i$, and $U_{z, i}\coloneqq U_z$ for every $z\in Z_i$, the result follows.
\end{proof}

Analogous to the setting of maximal crossed products (or maximal group $C^*$-algebras), one can define (on spaces with bounded geometry) a maximal version of the Roe algebra \cite{gong2008geometrization} (see also \cite{winkel2021geometric}).
Namely, if $(Y,d)$ has bounded geometry and $\CH_Y$ is an ample $Y$-module, then
\[
    \norm{x}_{\rm max}\coloneqq 
    \sup\braces{\norm{\pi(x)}\mid \pi \colon\roestar{\CH_Y}\to B(\CH)\text{ $*$-representation}}
\]
defines a $C^\ast$-norm on $\roestar{\CH_Y}$.

\begin{de}\label{def:maximal Roe algebra}
    The \emph{maximal Roe algebra} of the space $Y$ is the $C^*$-algebra $\roecstarmax{Y}$ defined as the completion of $\roestar{\CH_Y}$ with respect to $\norm{\cdot}_{\rm max}$.
\end{de}

By construction, $\roecstarmax{Y}$ has the universal property that every $\ast$\=/homomorphism $\roestar{\CH_Y}\to A$  to a $C^*$\=/algebra $A$ lifts to a unique $\ast$\=/homomorphism of  $\roecstarmax{Y}$.

\subsection{Warped spaces} \label{ssec:warped metric}
A \emph{length function} $\ell$ on a countable group $\Gamma$ is a function $\ell\colon\Gamma\to[0,\infty)$ that is symmetric and subadditive satisfying $\ell(\gamma)=0$ if and only if $\gamma=e$. It is \emph{proper} if for every $r \geq 0$, there are only finitely many $\gamma$ of length $\leq r$. There is a one-to-one correspondence between (proper) length functions and (proper) left-invariant metrics on $\Gamma$. Every countable group admits proper length functions, and they all give rise to coarsely equivalent metrics.

Let $\alpha \colon \Gamma\curvearrowright (Y,d)$ be an action, and let $\ell$ be a proper length function on $\Gamma$. By rescaling, we may always assume that $\min\{\ell(\gamma)\mid \gamma\in \Gamma \smallsetminus \{e\}\}=1$. We denote by $B_\Gamma(r)\subseteq\Gamma$ the set of elements of length $<r$.

\begin{de}
The \emph{warped metric} $\delta_\Gamma$ on $Y$ is the largest metric such that
\[
\delta_\Gamma \leq d \quad
\text{ and } \quad
\delta_\Gamma\paren{y,\gamma\cdot y}\leq \ell(\gamma)
\]
for every $y\in Y$ and $\gamma\in \Gamma$.
\end{de}

It is simple to verify that $\delta_\Gamma$ is given by the formula
\begin{equation}\label{eq:prelims:warped.distance}
  \delta_\Gamma(x,y) =\inf\left\{\sum_{i=1}^n\ell(\gamma_i)+\sum_{i=0}^n d(x_i,y_i)\right\},  
\end{equation}
where the infimum is taken over $n\in\NN$, $(n+1)$\=/tuples $x_0,\ldots,x_n$ and $y_0,\ldots,y_n$ and $n$\=/tuples $\gamma_1,\ldots,\gamma_n\in\Gamma$ such that $x=x_0$, $y=y_n$ and $x_{i}=\gamma_i (y_{i-1})$ for every $i=1,\ldots,n$.

It is straightforward to verify that $d$ and $\delta_\Gamma$ generate the same topology on $Y$. It follows that geometric $(Y,d)$-modules and geometric $(Y,\delta_\Gamma)$-modules coincide (but the algebras of operators of finite propagation do not!).

For a subset $A$ in a metric space $Z$, let $N_Z(A;r)\coloneqq\{z\in Z\mid d(z,A)<r\}$ denote the $r$-neighbourhood of $A$. In the sequel we will need the following result. Its proof is easy for actions by isometries. The non-isometric case is a little more involved, but it is essentially contained in \cite[Proposition 1.7]{roe2005warped} (see also \cite[Lemma 4.2]{li2020markovian}).

\begin{lem}\label{lem:balls in finite products}
 If $\Gamma\curvearrowright (Y,d)$ is an action by Lipschitz maps, then for every $r > 0$, there exists $R > 0$ such that
 \[
 N_{(Y,\delta_\Gamma)}\paren{A;r}\subseteq B_\Gamma(r)\cdot N_{(Y,d)}(A; R)
 \]
 for every $A \subseteq Y$.
\end{lem}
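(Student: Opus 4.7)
The plan is to unpack the definition \eqref{eq:prelims:warped.distance} of $\delta_\Gamma$ and carefully track what happens when one pulls back the path from $y$ to $A$ by successive group elements, using that Lipschitz constants of elements in $B_\Gamma(r)$ are uniformly bounded because $B_\Gamma(r)$ is finite.

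First, I would fix $r>0$ and set up the ingredients: the finite set $B_\Gamma(r)\subseteq \Gamma$ and a common Lipschitz constant $L=L(r)<\infty$ for all the homeomorphisms $\sigma\colon Y\to Y$ with $\sigma\in B_\Gamma(r)$ (note $\ell$ is symmetric, so this also bounds the Lipschitz constant of each $\sigma^{-1}$). Using the normalisation $\min\{\ell(\gamma)\mid \gamma\neq e\}=1$, any path realising the infimum in \eqref{eq:prelims:warped.distance} of total cost $<r$ uses at most $N\coloneqq \lceil r\rceil-1$ non-trivial group elements (the trivial ones can be absorbed via the triangle inequality). I would then declare $R\coloneqq L^N r$ and claim this works.

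Now take $y\in N_{(Y,\delta_\Gamma)}(A;r)$ and pick $a\in A$ with $\delta_\Gamma(y,a)<r$. By \eqref{eq:prelims:warped.distance} there are tuples $x_0=a,\,y_0,\,x_1=\gamma_1 y_0,\,y_1,\ldots,x_n=\gamma_n y_{n-1},\,y_n=y$ with $\sum_i \ell(\gamma_i)+\sum_i d(x_i,y_i)<r$ and (after reduction) each $\gamma_i\ne e$, so $n\leq N$. Set $\gamma\coloneqq \gamma_n\cdots\gamma_1$, which lies in $B_\Gamma(r)$ by subadditivity of $\ell$. The key step is to show by finite downward induction on $k$ that the points $u_n\coloneqq y_n$, $u_{k-1}\coloneqq \gamma_k^{-1}u_k$ satisfy
\[
d(u_k,x_k)\ \leq\ \sum_{i=k}^{n} L^{\,i-k}\, d(x_i,y_i).
\]
The inductive step uses $y_{k-1}=\gamma_k^{-1}x_k$ together with the Lipschitz bound $d(\gamma_k^{-1}u_k,\gamma_k^{-1}x_k)\leq L\,d(u_k,x_k)$ and the triangle inequality. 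Evaluating at $k=0$ gives $d(\gamma^{-1}y,a)\leq L^n\sum_i d(x_i,y_i)< L^N r=R$, so $\gamma^{-1}y\in N_{(Y,d)}(A;R)$, and hence $y\in B_\Gamma(r)\cdot N_{(Y,d)}(A;R)$.

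The main obstacle is the non-isometric nature of the action: each time we translate by $\gamma_k^{-1}$, the $d$-error gets multiplied by $L$, producing the exponential factor $L^n$. This is only harmless because $n$ itself is bounded by $r$, which in turn relies on the normalisation of $\ell$ (so that a bounded total length forces a bounded number of group steps). Everything else is bookkeeping.
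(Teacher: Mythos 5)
Your argument is correct, and it takes a genuinely different route from the paper. The paper proves the lemma by induction on $\lfloor r\rfloor$: it decomposes $N_{(Y,\delta_\Gamma)}(A;r)$ set-theoretically according to the first group element used in a warped path, applies the inductive hypothesis to each piece, and only obtains an implicit constant $R$. You instead unpack the explicit formula \eqref{eq:prelims:warped.distance} for $\delta_\Gamma$ directly: you take a single chain of total cost $<r$, observe that after discarding trivial group elements it contains at most $N=\lceil r\rceil-1$ jumps (this uses the normalisation $\ell(\gamma)\geq 1$ for $\gamma\neq e$, exactly as in the paper), and then run a finite downward induction along the chain, multiplying the accumulated $d$-error by the uniform Lipschitz constant $L$ of the elements of $B_\Gamma(r)$ at each step. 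This yields the explicit constant $R=L^N r$ and the correct group element $\gamma=\gamma_n\cdots\gamma_1\in B_\Gamma(r)$ in one pass, which is arguably cleaner than the paper's nested set containments; the paper's version, on the other hand, is organised so that the same induction immediately generalises (e.g.\ to actions by coarse equivalences, cf.\ Remark~\ref{rmk:action by coarse equivalence suffices}), whereas your explicit $L^N$ bound is tied to the Lipschitz setting. Two trivial points to tidy up: replace $L$ by $\max(L,1)$ so that $L^n\leq L^N$ holds for all $n\leq N$, and note explicitly that deleting a step with $\gamma_i=e$ does not increase the cost of the chain, by the triangle inequality for $d$. Neither affects the validity of the argument.
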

\begin{proof}[Sketch of proof]
 The proof is by induction on $\lfloor r\rfloor$. We may assume that $\ell(\gamma)\geq 1$ for every $\gamma\in \Gamma \smallsetminus \{e\}$. It then follows that if $\lfloor r\rfloor= 0$, the warping condition does not have any effect; hence $N_{(Y,\delta_\Gamma)}\paren{A;r}\subseteq N_{(Y,d)}(A; r)$.

 For the inductive step, with the help of \eqref{eq:prelims:warped.distance} it is straightforward to verify that $ N_{(Y,\delta_\Gamma)}(A; r)$ is contained in the union
 \begin{equation*}\label{eq:neighbourhood.containment}
  N_{(Y,d)}(A; r)
  \cup \left( \bigcup_{1\leq m < r}
  N_{(Y,\delta_\Gamma)}\Bigparen{\overline B_\Gamma(m)\cdot N_{(Y,d)}(A;r-m)\, ;\, r-m}
  \right),
 \end{equation*}
 where $\overline B_\Gamma(m)$ denotes the closed ball of radius $m$. By induction, there is an $R' > 0$ such that the above is contained in
 \[
  \bigcup_{1\leq m < r}
  B_\Gamma(r-m) \cdot N_{(Y,d)}\Bigparen{\overline B_\Gamma(m)\cdot N_{(Y,d)}(A;r-m)\, ;\, R'}
 \]
 and hence in
 \[
  B_\Gamma(r) \cdot N_{(Y,d)}(A;LR'+r),
 \]
 where $L$ is the maximum of the Lipschitz constants over all $\gamma\in B_\Gamma(r)$.
\end{proof}

\begin{cor} \label{cor:warped bounded geometry}
 If $\Gamma\curvearrowright (Y,d)$ is a Lipschitz action and $(Y,d)$ has bounded geometry, then $(Y,\delta_\Gamma)$ has bounded geometry as well.
\end{cor}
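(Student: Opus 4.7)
The plan is to derive bounded geometry of $(Y,\delta_\Gamma)$ by leveraging Lemma~\ref{lem:balls in finite products} together with the properness of $\ell$ and the Lipschitz hypothesis. I will show that the same gauge $\varrho$ that witnesses bounded geometry of $(Y,d)$ also works for $(Y,\delta_\Gamma)$.

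First I would fix $R>0$ and a $\delta_\Gamma$-ball $B_{\delta_\Gamma}(y;R)$. By Lemma~\ref{lem:balls in finite products}, there is some $R'=R'(R)$ such that
\[
B_{\delta_\Gamma}(y;R) \subseteq B_\Gamma(R)\cdot B_{(Y,d)}(y;R').
\]
Since $\ell$ is proper, $B_\Gamma(R)$ is a finite set, so this exhibits the $\delta_\Gamma$-ball as a finite union of $d$-neighbourhoods of translates. Next, since the action is by Lipschitz maps and $B_\Gamma(R)$ is finite, we may set $L=L(R)\coloneqq\max\{\operatorname{Lip}(\gamma)\mid \gamma\in B_\Gamma(R)\}<\infty$, so that each set $\gamma\cdot B_{(Y,d)}(y;R')$ with $\gamma\in B_\Gamma(R)$ is contained in a $d$-ball of radius $LR'$.

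Now I invoke bounded geometry of $(Y,d)$: every such $d$-ball is covered by at most $N(LR')$ balls of radius $\varrho$ in the metric $d$. Since $\delta_\Gamma\leq d$, every $d$-ball of radius $\varrho$ is contained in the $\delta_\Gamma$-ball of radius $\varrho$ with the same centre. Hence
\[
B_{\delta_\Gamma}(y;R) \subseteq \bigcup \; \text{at most } \abs{B_\Gamma(R)}\cdot N(LR') \text{ many } \delta_\Gamma\text{-balls of radius } \varrho,
\]
and the bound on the right is independent of $y$. Thus the bounded geometry condition is satisfied for $(Y,\delta_\Gamma)$ with gauge $\varrho$ and covering constant $N'(R)\coloneqq \abs{B_\Gamma(R)}\cdot N(L(R)R'(R))$.

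No step is really an obstacle, as all the needed inputs are already at hand: the properness of $\ell$ gives finiteness of $B_\Gamma(R)$, the Lipschitz hypothesis is used solely to replace translated $d$-balls by (larger) $d$-balls, and Lemma~\ref{lem:balls in finite products} does the main geometric work of controlling $\delta_\Gamma$-neighbourhoods in terms of $d$-neighbourhoods and orbit translates. The mild point to handle carefully is the Lipschitz bound, which depends on the finite set $B_\Gamma(R)$ and hence on $R$; this dependence is harmless since we only need, for each $R$, a finite covering number.
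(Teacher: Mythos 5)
Your proof is correct and is exactly the argument the paper has in mind: the corollary is stated without proof as an immediate consequence of Lemma~\ref{lem:balls in finite products}, and your derivation—finiteness of $B_\Gamma(R)$ from properness of $\ell$, the uniform Lipschitz bound over this finite set to trap each translate $\gamma\cdot B_{(Y,d)}(y;R')$ in a $d$-ball of radius $LR'$, and the inequality $\delta_\Gamma\leq d$ to convert $d$-balls of radius $\varrho$ into $\delta_\Gamma$-balls of the same radius—fills in precisely the intended details. The resulting covering constant $\abs{B_\Gamma(R)}\cdot N(L(R)R'(R))$ depends only on $R$, as required.
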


\begin{rmk}\label{rmk:action by coarse equivalence suffices}
    Lemma~\ref{lem:balls in finite products} and Corollary~\ref{cor:warped bounded geometry} also hold true---with the same proof---for actions by coarse equivalences.
\end{rmk}

\section{ROE ALGEBRAS OF (WARPED) METRIC SPACES} \label{sec:roe alg warped spaces}

In this section, we describe the relation between algebras of operators of finite dynamical propagation and ``Roe-like'' algebras of (warped) spaces. We work under the following hypothesis.

\begin{assumption} \label{ass:Lipschitz action on proper}
 $(Y,d,\nu)$ is a proper metric space with a $\sigma$-finite Borel measure and $\alpha\colon \Gamma\curvearrowright Y$ is a non-singular Lipschitz action of a countable discrete group.
\end{assumption}

We fix $L^2Y$ as a geometric module. Rather than writing $\roestar{L^2Y}$ for the Roe $\ast$\=/algebra, in this section we keep the dependence on the metric more explicit by writing $\roestar{Y,d}$ instead. We do the same for the other ``Roe-like'' algebras. Note that there is some notational inconsistency here: With this convention, the $C^*$-completion $\roecstar{L^2Y}$ becomes $\roecstar{Y,d}$, which should denote the Roe algebra of the metric space $(Y,d)$. However, this is only the case if $L^2Y$ is an ample $Y$-module. When discussing this algebra, it is thus better to be unambiguous by assuming that this is indeed the case. Hence, when we consider the Roe algebra, we also make the following assumption.

\begin{assumption} \label{ass:ample module}
 $L^2Y$ is an ample $(Y,d)$-module. (This is for example the case if $\nu$ is non-atomic and has full support).
\end{assumption}

In our results below, we will explicitly refer to these assumptions when we make (one of) them. We begin with the following observation.

\begin{lem} \label{lem:the action normalizes}
Let $\alpha\colon \Gamma\curvearrowright Y$ be a group action satisfying Assumption~\ref{ass:Lipschitz action on proper}. Then $\pi_\alpha(\Gamma)$ normalizes $\fpstar{Y,d}$ and $C^*_{lc}(Y,d)$, and hence also $\roestar{Y,d}$.
\end{lem}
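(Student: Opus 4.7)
The plan is to use the covariance relation for the Koopman representation (established in Section~\ref{ssec:crossed products}) to conjugate multiplication operators and then track supports. More precisely, the identity $\pi_\alpha(\gamma) M_f \pi_\alpha(\gamma)^{-1} = M_{\gamma\cdot f}$ extends from $L^\infty Y$ to the canonical extension $\rho\colon B_b(Y)\to B(L^2Y)$, so that for any bounded Borel function $f$ with $\supp(f)=A$ one has $\pi_\alpha(\gamma)\rho(f)\pi_\alpha(\gamma)^{-1}=\rho(f\circ\gamma^{-1})$, and the latter operator has support $\gamma\cdot A$. I would first record this as a preliminary computation.

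Next, for normalization of $\fpstar{Y,d}$, fix $T$ of propagation $\leq R$ and $\gamma\in\Gamma$, and let $L_\gamma$ denote the Lipschitz constant of the homeomorphism $y\mapsto \gamma\cdot y$. Because $\alpha$ is a \emph{group} action by Lipschitz maps, the inverse $\gamma^{-1}$ is also Lipschitz, and the two constants give the bi\=/Lipschitz estimate $d(\gamma^{-1}\cdot a,\gamma^{-1}\cdot b)\geq d(a,b)/L_\gamma$. For $f,g\in C_0(Y)$ with $d(\supp f,\supp g)>L_\gamma R$, the preliminary computation gives
\[
\rho(f)\,\pi_\alpha(\gamma)T\pi_\alpha(\gamma)^{-1}\rho(g)=\pi_\alpha(\gamma)\,\rho(f\circ\gamma)\,T\,\rho(g\circ\gamma)\,\pi_\alpha(\gamma)^{-1},
\]
and the supports $\gamma^{-1}\cdot\supp f$ and $\gamma^{-1}\cdot\supp g$ are at distance $>R$, so the middle factor vanishes by finite propagation of $T$. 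Hence $\pi_\alpha(\gamma)T\pi_\alpha(\gamma)^{-1}$ has propagation at most $L_\gamma R$.

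For the normalization of $C^*_{lc}(Y,d)$, fix $T$ locally compact and $f\in C_c(Y)$. Using the same identity,
\[
\rho(f)\,\pi_\alpha(\gamma)T\pi_\alpha(\gamma)^{-1}=\pi_\alpha(\gamma)\,\rho(f\circ\gamma)\,T\,\pi_\alpha(\gamma)^{-1}.
\]
Since $\gamma^{-1}$ is Lipschitz and $(Y,d)$ is proper, $\gamma^{-1}\cdot\supp f$ is bounded and closed, hence compact, so $f\circ\gamma\in C_c(Y)$; then $\rho(f\circ\gamma)T$ is compact by local compactness of $T$, and composing with bounded operators on both sides preserves compactness. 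An identical argument, applied to $\pi_\alpha(\gamma)T\pi_\alpha(\gamma)^{-1}\rho(f)=\pi_\alpha(\gamma)T\rho(f\circ\gamma)\pi_\alpha(\gamma)^{-1}$, handles the right-multiplication case.

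Finally, since $\roestar{Y,d}=\fpstar{Y,d}\cap C^*_{lc}(Y,d)$ (by definition) and conjugation by $\pi_\alpha(\gamma)$ preserves each factor, it preserves the intersection; this gives the claim for $\roestar{Y,d}$. The only mildly subtle point, and hence the one I would state carefully, is the appeal to the bi-Lipschitz estimate and to properness of $(Y,d)$ to conclude $f\circ\gamma\in C_c(Y)$; everything else is essentially bookkeeping with the covariance relation.
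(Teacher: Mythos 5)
Your argument is correct and is essentially the paper's own proof: both conjugate multiplication operators through the Koopman representation via the covariance relation, use the bi\-/Lipschitz estimate to bound the propagation of the conjugated operator by $L_\gamma R$, and use continuity of $\alpha_\gamma$ to see that $f\circ\gamma$ stays in $C_c(Y)$ for the local compactness part. The only cosmetic differences are that you conjugate by $\pi_\alpha(\gamma)$ where the paper conjugates by $\pi_\alpha(\gamma)^*$, and that you spell out the right\-/multiplication case for local compactness, which the paper leaves implicit.
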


\begin{proof}
 Let $T$ be an operator with finite propagation, say $r>0$, and fix $\gamma\in \Gamma$. Let $L$ denote the bi-Lipschitz constant of $\alpha_\gamma$. We claim that $\pi_\alpha(\gamma^{-1})T\pi_\alpha(\gamma^{-1})^*=\pi_\alpha(\gamma)^*T\pi_\alpha(\gamma)$ has propagation at most $Lr$. Let $f,g\in C_0(Y)$ be functions with $d(\supp(f),\supp(g))>Lr$. Note that $\supp(\gamma\cdot f)= \gamma \cdot \supp(f)$, and hence,
 \[
  d(\supp(\gamma\cdot f),\supp(\gamma \cdot g))>r.
 \]
 We therefore have
 \[
  M_f\pi_\alpha(\gamma)^*T\pi_\alpha(\gamma)M_g=
  \pi_\alpha(\gamma)^*M_{\gamma\cdot f}TM_{\gamma\cdot g}\pi_\alpha(\gamma) = 0,
 \]
 as desired.

 Let now $T\in C^*_{lc}(Y,d)$ be a locally compact operator. Fix $\gamma\in \Gamma$ and $f\in C_c(Y)$. Since the action is continuous, $\gamma\cdot f$ is still in $C_c(Y)$; hence
 \[
  \pi_\alpha(\gamma)^*T\pi_\alpha(\gamma) M_f=\pi_\alpha(\gamma)^* T M_{\gamma\cdot f} \pi_\alpha(\gamma)
 \]
 is compact as composition of a compact and bounded operators.
\end{proof}

\begin{cor}
Let $\alpha\colon \Gamma\curvearrowright Y$ be a group action satisfying Assumption~\ref{ass:Lipschitz action on proper}. Then the $*$-algebra $\fpstar{\Gamma\curvearrowright Y}$ normalizes $\fpstar{Y,d}$ and $C^*_{lc}(Y,d)$, and hence also $\roestar{Y,d}$.
\end{cor}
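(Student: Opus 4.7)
The plan is to reduce to the case of ``building blocks'' $M_f \pi_\alpha(\gamma)$ via Theorem~\ref{thmA:finite propagation as crossed product}. That theorem's surjectivity clause tells us that every $T \in \fpstar{\Gamma \curvearrowright Y}$ is a finite sum $T = \sum_{\gamma \in F} M_{f_\gamma}\pi_\alpha(\gamma)$ with $f_\gamma \in L^\infty Y$, so it suffices to verify that each such summand normalizes each of $\fpstar{Y,d}$, $C^*_{lc}(Y,d)$, and $\roestar{Y,d}$. As explained in the Notation of Section~\ref{subsec:notationandconventions}, this is exactly what is needed to guarantee that the spans $\fpstar{\Gamma\curvearrowright Y}\cdot B$ and $B\cdot \fpstar{\Gamma\curvearrowright Y}$ coincide (for $B$ any of the three algebras), so that the product $\ast$-algebras are well defined.

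For a building block $T = M_f \pi_\alpha(\gamma)$ and $S \in B(L^2 Y)$, one immediately rewrites
\[
T S T^* = M_f \bigl( \pi_\alpha(\gamma) S \pi_\alpha(\gamma)^* \bigr) M_{\bar f}.
\]
Lemma~\ref{lem:the action normalizes} handles the middle factor: when $S$ lies in one of the three target algebras, so does $\pi_\alpha(\gamma) S \pi_\alpha(\gamma)^*$. The remaining task is therefore to check that left- and right-multiplication by $M_f$, for arbitrary $f \in L^\infty Y$, preserves each of $\fpstar{Y,d}$, $C^*_{lc}(Y,d)$, and $\roestar{Y,d}$.

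These three verifications are routine. Since $M_f$ has metric propagation $0$, left- or right-multiplication by $M_f$ cannot increase propagation, which settles the case of $\fpstar{Y,d}$. If $S'$ is locally compact and $g \in C_c(Y)$, then $M_g (M_f S') = M_f (M_g S')$ is the composition of the bounded operator $M_f$ with the compact operator $M_g S'$, hence compact; an analogous argument applies to $(M_f S') M_g$, and symmetrically for $S' M_f$, so multiplication by $M_f$ on either side preserves $C^*_{lc}(Y,d)$. The case of $\roestar{Y,d} = \fpstar{Y,d}\cap C^*_{lc}(Y,d)$ follows from the previous two by intersection. I do not foresee any substantive obstacle here; the argument is essentially a routine unfolding of Theorem~\ref{thmA:finite propagation as crossed product} together with Lemma~\ref{lem:the action normalizes}, and the only mild subtlety is that although a generic $M_f$ need not itself belong to $C^*_{lc}(Y,d)$, it still preserves this algebra under multiplication on either side.
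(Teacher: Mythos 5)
Your proposal is correct and follows essentially the same route as the paper: reduce via the surjectivity part of Theorem~\ref{thmA:finite propagation as crossed product} to the generators $M_f\pi_\alpha(\gamma)$, invoke Lemma~\ref{lem:the action normalizes} for the Koopman factor, and check directly that multiplication by $L^\infty Y$ preserves finite propagation and local compactness. You spell out the $M_f$ verifications a little more explicitly than the paper does, but the decomposition, the key lemma, and the implicit convention that it suffices to check the normalization condition on the spanning elements $M_f\pi_\alpha(\gamma)$ are all identical to the paper's argument.
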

\begin{proof}
 By Theorem~\ref{thmA:finite propagation as crossed product}, $\fpstar{\Gamma\curvearrowright Y}$ is the surjective image of $L^\infty Y\rtimesalg\Gamma$, so by Lemma \ref{lem:the action normalizes} it remains to verify that $L^\infty Y$ normalizes the algebras $\fpstar{Y,d}$ and $C^*_{lc}(Y,d)$.

 Since $L^\infty Y$ is contained in $\fpstar{Y,d}$, it clearly normalizes it. Also, since $L^\infty Y$ commutes with $C_c(Y)$, it is easy to see that multiplying by functions in $L^\infty Y$ preserves local compactness.
\end{proof}

\begin{cor}
Let $\alpha\colon \Gamma\curvearrowright Y$ be a group action satisfying Assumptions~\ref{ass:Lipschitz action on proper} and \ref{ass:ample module}. Then $\fpcstar{\Gamma\curvearrowright Y}$ normalizes $\fpcstar{Y,d}$ and $\roecstar{Y,d}$, where the latter is the Roe algebra.
\end{cor}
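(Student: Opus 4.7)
The plan is to obtain this corollary from the preceding one by a routine density/continuity argument, observing that both $\fpcstar{Y,d}$ and $\roecstar{Y,d}$ are by definition the norm closures of $\fpstar{Y,d}$ and $\roestar{Y,d}$, respectively.

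Fix $T\in\fpcstar{\Gamma\curvearrowright Y}$ and $b$ in $\fpcstar{Y,d}$ (or in $\roecstar{Y,d}$). First, I would choose sequences $T_n\in\fpstar{\Gamma\curvearrowright Y}$ with $\norm{T_n-T}\to 0$ and $b_n\in\fpstar{Y,d}$ (resp.\ $b_n\in\roestar{Y,d}$) with $\norm{b_n-b}\to 0$; these exist because the starred algebras are dense in the corresponding $C^*$-closures by definition. The previous corollary then gives $T_nb_nT_n^*\in\fpstar{Y,d}$ (resp.\ $\in\roestar{Y,d}$), so in particular $T_nb_nT_n^*\in\fpcstar{Y,d}$ (resp.\ $\in\roecstar{Y,d}$).

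Next, I would show $T_nb_nT_n^*\to TbT^*$ in norm. Splitting
\[
T_nb_nT_n^*-TbT^*=(T_n-T)b_nT_n^*+T(b_n-b)T_n^*+Tb(T_n^*-T^*)
\]
and using that $\norm{T_n}$, $\norm{T_n^*}=\norm{T_n}$ are uniformly bounded (because $T_n\to T$), that $\norm{b_n}$ is uniformly bounded, and that each of $\norm{T_n-T}$, $\norm{b_n-b}$, $\norm{T_n^*-T^*}$ tends to $0$, I get norm convergence. Since $\fpcstar{Y,d}$ and $\roecstar{Y,d}$ are norm-closed, it follows that $TbT^*\in\fpcstar{Y,d}$ (resp.\ $TbT^*\in\roecstar{Y,d}$), which is exactly the statement that $\fpcstar{\Gamma\curvearrowright Y}$ normalizes both algebras.

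There is no serious obstacle here; the whole proof is just joint norm-continuity of multiplication on bounded sets combined with the density of the $*$-algebras in their $C^*$-closures. The only thing to be careful about is that one applies the preceding corollary rather than invoking normalization of $T_n$ individually: what is used is that every single pair $(T_n,b_n)$ of starred-algebra elements gives $T_nb_nT_n^*\in\roestar{Y,d}$ (resp.\ $\fpstar{Y,d}$), which is precisely the content of the previous corollary.
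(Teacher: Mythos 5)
Your proof is correct and is exactly the routine density-and-continuity argument the paper has in mind (it omits the proof of this corollary as immediate from the preceding one). The approximation of both $T$ and $b$ by elements of the dense $\ast$-algebras, the uniform boundedness of the approximating sequences, and the norm-closedness of $\fpcstar{Y,d}$ and $\roecstar{Y,d}$ are precisely the ingredients needed, so there is nothing to add.
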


The following result follows in a straightforward way from Theorem~\ref{thmA:finite propagation as crossed product}.

\begin{prop}\label{prop:equalities of algebras}
Under Assumption~\ref{ass:Lipschitz action on proper}, we have the following equalities of $\ast$-subalgebras of $ B(L^2Y)$:
 \begin{align*}
  \Gamma\cdot \fpstar{Y,d} &=
  \fpstar{\Gamma\curvearrowright Y}\cdot \fpstar{Y,d},
  \\
  \Gamma\cdot \roestar{Y,d} &=
  \fpstar{\Gamma\curvearrowright Y}\cdot \roestar{Y,d},
 \end{align*}
 Taking the closure, under Assumption~\ref{ass:ample module} (so that $\roecstar{Y,d}$ is the Roe algebra), we also have
 \begin{align*}
  \overline{\Gamma\cdot \fpcstar{Y,d}}^{\norm{\cdot}} &=
  \overline{\fpcstar{\Gamma\curvearrowright Y}\cdot \fpcstar{Y,d}}^{\norm{\cdot}},
  \\
  \overline{\Gamma\cdot \roecstar{Y,d}}^{\norm{\cdot}} &=
  \overline{\fpcstar{\Gamma\curvearrowright Y}\cdot \roecstar{Y,d}}^{\norm{\cdot}}.
 \end{align*}
\end{prop}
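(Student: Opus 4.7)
The plan is to reduce both equalities to a direct consequence of Theorem~\ref{thmA:finite propagation as crossed product}, Lemma~\ref{lem:the action normalizes}, and Lemma~\ref{lem:roe alg is ideal}. The key preliminary observation is that multiplication operators $M_f$ with $f\in L^\infty Y$ have propagation $0$ with respect to the pointwise-multiplication representation of $C_0(Y)$ on $L^2 Y$, so $L^\infty Y \subseteq \fpstar{Y,d}$. (Note that $L^\infty Y$ is in general \emph{not} contained in $\roestar{Y,d}$, but this will not be needed.)

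For the first equality, the containment $\Gamma\cdot \fpstar{Y,d}\subseteq \fpstar{\Gamma\curvearrowright Y}\cdot \fpstar{Y,d}$ is immediate since $\pi_\alpha(\Gamma)\subseteq \fpstar{\Gamma\curvearrowright Y}$. For the reverse containment, given $T_1\in \fpstar{\Gamma\curvearrowright Y}$ and $T_2\in \fpstar{Y,d}$, Theorem~\ref{thmA:finite propagation as crossed product} lets me write $T_1 = \sum_i M_{f_i}\pi_\alpha(\gamma_i)$ with $f_i\in L^\infty Y$ and finitely many $\gamma_i\in\Gamma$; then
\[
T_1 T_2 = \sum_i (M_{f_i}\, T'_{2,i})\,\pi_\alpha(\gamma_i),
\qquad \text{where } T'_{2,i}\coloneqq \pi_\alpha(\gamma_i)T_2\pi_\alpha(\gamma_i)^{-1}.
\]
By Lemma~\ref{lem:the action normalizes}, each $T'_{2,i}$ lies in $\fpstar{Y,d}$, and since $M_{f_i}\in L^\infty Y\subseteq \fpstar{Y,d}$, the product $M_{f_i}T'_{2,i}$ is in $\fpstar{Y,d}$, so $T_1 T_2\in \fpstar{Y,d}\circ\pi_\alpha(\Gamma)\subseteq \Gamma\cdot \fpstar{Y,d}$. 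The identical argument proves the Roe-algebra equality, with the additional ingredient that if $T_2\in \roestar{Y,d}$ then $T'_{2,i}\in \roestar{Y,d}$ (again by Lemma~\ref{lem:the action normalizes}), and $M_{f_i}T'_{2,i}\in \roestar{Y,d}$ because $\roestar{Y,d}$ is a two-sided ideal in $\fpstar{Y,d}$ by Lemma~\ref{lem:roe alg is ideal}.

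The closure versions are routine consequences of the algebraic identities. Joint norm-continuity of multiplication on bounded subsets of $B(L^2Y)$, together with density of $\fpstar{Y,d}$ in $\fpcstar{Y,d}$ and of $\fpstar{\Gamma\curvearrowright Y}$ in $\fpcstar{\Gamma\curvearrowright Y}$, shows that $\Gamma\cdot\fpstar{Y,d}$ and $\fpstar{\Gamma\curvearrowright Y}\cdot\fpstar{Y,d}$ are norm-dense in the products appearing on each side of the claimed closure identities, and similarly for the Roe versions. Combined with the already established algebraic equality $\Gamma\cdot\fpstar{Y,d}=\fpstar{\Gamma\curvearrowright Y}\cdot \fpstar{Y,d}$, this yields the two closure identities. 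I do not expect any serious obstacle here; the only point worth a moment of care is that a generic element of, say, $\fpcstar{\Gamma\curvearrowright Y}\cdot \fpcstar{Y,d}$ is a finite \emph{sum} of products $T_1 T_2$, so one approximates each product separately and then sums, which is fine because there are only finitely many terms.
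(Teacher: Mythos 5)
Your proof is correct and follows essentially the same route as the paper: both reduce the nontrivial containment to Theorem~\ref{thmA:finite propagation as crossed product} together with the fact that $L^\infty Y$ multiplies $\fpstar{Y,d}$ and $\roestar{Y,d}$ into themselves, and both treat the closures by approximating factors in norm. The only (cosmetic) difference is that you write elements of $\fpstar{\Gamma\curvearrowright Y}$ as $\sum_i M_{f_i}\pi_\alpha(\gamma_i)$ and then conjugate $T_2$ via Lemma~\ref{lem:the action normalizes}, whereas the paper writes them as $\sum_i \pi_\alpha(\gamma_i)M_{f_i}$ so that the multiplication operator is absorbed directly without any conjugation.
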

\begin{proof}
    The containment in one direction is clear, because $\fpstar{\Gamma\curvearrowright Y} = \Gamma\cdot L^\infty Y$ by Theorem~\ref{thmA:finite propagation as crossed product} and $L^\infty Y$ is unital. For the other containments, it is sufficient to observe that $L^\infty Y\circ \fpstar{Y,d} \subseteq \fpstar{Y,d}$ and $L^\infty Y\circ \roestar{Y,d} \subseteq \roestar{Y,d}$.
\end{proof}

\begin{rmk}
The $\ast$-algebra $\Gamma\cdot \fpstar{Y,d}$ contains both $\fpstar{\Gamma\curvearrowright Y}$ and $\fpstar{Y,d}$ as $\ast$\=/subalgebras. In other words, $\Gamma\cdot \fpstar{Y,d}$ is the $\ast$\=/subalgebra of $ B(L^2Y)$ generated by $\fpstar{\Gamma\curvearrowright Y}$ and $\fpstar{Y,d}$.

However, while it is still true that $\Gamma\cdot \roestar{Y,d}$ contains $\roestar{Y,d}$, it will in general not contain $\fpstar{\Gamma\curvearrowright Y}$ (and not even $\pi_\alpha(\Gamma)$), for these operators are in general not locally compact.
\end{rmk}

\begin{rmk}
 Regarding the $C^{\ast}$-completions, we point out that the algebras $\fpcstar{\Gamma\curvearrowright Y}\cdot \fpcstar{Y,d}$ and $
  \fpcstar{\Gamma\curvearrowright Y}\cdot \roecstar{Y,d}$ are contained in the closures of $\Gamma\cdot \fpcstar{Y,d}$ and $\Gamma\cdot \roecstar{Y,d}$, respectively. However, a priori they need not be closed, so we are not sure whether this containment is strict or not.
\end{rmk}

We now prove that under the assumption of bounded geometry, the algebras we just constructed are the algebra of operators with finite propagation and the Roe algebra of the warped metric space $(Y,\delta_\Gamma)$, respectively. The key result is the following.

\begin{prop}\label{prop:span generates fpstar}
Let $\alpha\colon \Gamma\curvearrowright Y$ be a group action satisfying Assumption~\ref{ass:Lipschitz action on proper}. If $\paren{Y,d}$ has bounded geometry, then
 \(
 \fpstar{{Y,\delta_\Gamma}} = \Gamma\cdot \fpstar{{Y,d}}
 \)
\end{prop}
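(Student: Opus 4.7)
The plan is to prove the two inclusions separately. The inclusion $\Gamma\cdot \fpstar{Y,d} \subseteq \fpstar{Y,\delta_\Gamma}$ follows from two elementary observations: since $\delta_\Gamma \leq d$, any operator of finite $d$\=/propagation also has finite $\delta_\Gamma$\=/propagation; and for every $\gamma\in\Gamma$, the Koopman operator $\pi_\alpha(\gamma)$ sends $L^2A$ into $L^2(\gamma\cdot A)\subseteq L^2N_{(Y,\delta_\Gamma)}(A;\ell(\gamma))$, so $\pi_\alpha(\gamma)$ has $\delta_\Gamma$\=/propagation at most $\ell(\gamma)$. Since finite $\delta_\Gamma$\=/propagation is preserved under composition, the inclusion follows.

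For the reverse inclusion I would fix $T\in\fpstar{Y,\delta_\Gamma}$ with propagation $r$ and set $S\coloneqq B_\Gamma(r)$. By Lemma~\ref{lem:balls in finite products}, there exists $R'>0$ (depending only on $r$) such that $N_{(Y,\delta_\Gamma)}(A;r)\subseteq S\cdot N_{(Y,d)}(A;R')$ for every $A\subseteq Y$. Using bounded geometry, I would then apply Lemma~\ref{lem:partitioning bded geometry spaces} with a separation parameter $R$ chosen sufficiently large compared to $R'$ (say $R>2R'+8\varrho$) to obtain a finite partition $Y=\bigsqcup_{i=1}^n Y_i$ together with subpartitions $Y_i=\bigsqcup_{z\in Z_i}U_{z,i}$, where each $U_{z,i}$ has $d$\=/diameter at most $4\varrho$ and distinct $U_{z,i},U_{z',i}$ (same $i$) are $R$\=/separated.

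The crux is then to decompose $T=\sum_{s\in S}\pi_\alpha(s)T_s$ with each $T_s\in \fpstar{Y,d}$. Fixing a total order on $S$, I would disjointify by setting
\[
 W_{s,z,i}\coloneqq s\cdot N_{(Y,d)}(U_{z,i};R')\smallsetminus \bigcup_{s'<s}s'\cdot N_{(Y,d)}(U_{z,i};R'),
\]
so that $\{W_{s,z,i}\}_{s\in S}$ partitions $S\cdot N_{(Y,d)}(U_{z,i};R')$. Define $T_{s,i}\coloneqq\sum_{z\in Z_i}\chi_{W_{s,z,i}}T\chi_{U_{z,i}}$; because $R>2R'$, the sets $W_{s,z,i}$ are also pairwise disjoint for varying $z$, so the sum converges strongly and defines a bounded operator of norm at most $\norm T$. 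The containment from Lemma~\ref{lem:balls in finite products} ensures that for every $\eta\in L^2U_{z,i}$, the vector $T\eta$ is supported in $\bigsqcup_s W_{s,z,i}$, and summing over $z$ and $s$ gives $T\chi_{Y_i}=\sum_{s\in S}T_{s,i}$ and hence $T=\sum_i T\chi_{Y_i}=\sum_{s,i}T_{s,i}$. Writing $T_{s,i}=\pi_\alpha(s)S_{s,i}$ with $S_{s,i}\coloneqq \pi_\alpha(s^{-1})T_{s,i}$, the covariance relation $\pi_\alpha(s^{-1})\chi_W=\chi_{s^{-1}W}\pi_\alpha(s^{-1})$ shows that $S_{s,i}$ sends $L^2U_{z,i}$ into $L^2N_{(Y,d)}(U_{z,i};R')$, so $S_{s,i}$ has finite $d$\=/propagation at most $R'+4\varrho$; taking $T_s\coloneqq \sum_{i=1}^n S_{s,i}$ yields the desired element of $\fpstar{Y,d}$.

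The main obstacle I anticipate is verifying the boundedness of the countable strong-operator sums defining $T_{s,i}$ and checking that the resulting $S_{s,i}$ genuinely have finite $d$\=/propagation (rather than just consisting of many pairwise orthogonal bounded-diameter blocks). Both points rely critically on the choice of $R$ being large enough in Lemma~\ref{lem:partitioning bded geometry spaces}, ensuring that the $R'$\=/neighborhoods of distinct $U_{z,i}$ are still pairwise disjoint and that the disjointifications $W_{s,z,i}$ remain nested inside the intended translates $s\cdot N_{(Y,d)}(U_{z,i};R')$ without leaking into neighboring pieces.
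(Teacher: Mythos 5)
Your proof is correct and follows essentially the same route as the paper's: partition $Y$ into finitely many families of small, well-separated pieces via Lemma~\ref{lem:partitioning bded geometry spaces}, use Lemma~\ref{lem:balls in finite products} to cover the warped $r$-neighbourhood of each piece by the translates $s\cdot N_{(Y,d)}(U_{z,i};R')$ for $s\in B_\Gamma(r)$, disjointify, and pull back by $\pi_\alpha(s^{-1})$ to land in $\fpstar{Y,d}$ (your disjointification is exactly the paper's Lemma~\ref{lem:partitioning neighbourhoods}). The only cosmetic difference is that you apply the partition lemma to $(Y,d)$ rather than to $(Y,\delta_\Gamma)$, which if anything makes the final $d$-propagation bound $R'+4\varrho$ slightly cleaner.
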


We first need to prove a lemma. Recall from Lemma~\ref{lem:balls in finite products} that in the setting of Proposition \ref{prop:span generates fpstar}, for every $r > 0$ there exists $R=R(r)>0$ with
 \[
 N_{\paren{Y,\delta_\Gamma}}\paren{A;r}\subseteq B_\Gamma(r)\cdot N_{\paren{Y,d}}(A; R)
 \]
for every $A\subseteq {Y}$.

\begin{lem}\label{lem:partitioning neighbourhoods}
Given a Lipschitz action $\Gamma\curvearrowright Y$, any subset $A\subseteq Y$, $r>0$ and $R(r)$ as above, there exists a measurable partition
\[
    N_{\paren{Y,\delta_\Gamma}}\paren{A;r}=\bigsqcup_{\gamma\in B_\Gamma(r)}\Omega_\gamma
\]
such that each $\gamma^{-1}(\Omega_\gamma)\subseteq N_{\paren{Y,d}}\paren{A;R}$ for every $\gamma\in B_\Gamma(r)$.
\end{lem}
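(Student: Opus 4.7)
The plan is to construct the partition explicitly by using Lemma~\ref{lem:balls in finite products} to cover $N_{(Y,\delta_\Gamma)}(A;r)$ by translates of $N_{(Y,d)}(A;R)$, then force disjointness by the usual iterative ``subtract what has already been used'' trick. Since the length function $\ell$ is proper, the set $B_\Gamma(r)$ is finite, so we may enumerate it as $\gamma_1,\ldots,\gamma_n$. Each $\alpha_{\gamma_i}$ is a Lipschitz homeomorphism (its inverse is $\alpha_{\gamma_i^{-1}}$, which is Lipschitz by the standing assumption), so it maps open sets to open sets. In particular, the neighbourhood $N_{(Y,d)}(A;R)$ is open and hence so is each $\gamma_i\cdot N_{(Y,d)}(A;R)$; likewise $N_{(Y,\delta_\Gamma)}(A;r)$ is open because $d$ and $\delta_\Gamma$ induce the same topology on $Y$. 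Thus, all sets appearing below are Borel.

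I would then define inductively
\[
\Omega_{\gamma_i}\coloneqq \Bigl(\gamma_i\cdot N_{(Y,d)}(A;R)\Bigr)\cap N_{(Y,\delta_\Gamma)}(A;r)\ \setminus\ \bigcup_{j<i}\Omega_{\gamma_j}.
\]
By construction the $\Omega_{\gamma_i}$ are pairwise disjoint Borel subsets of $N_{(Y,\delta_\Gamma)}(A;r)$, and $\Omega_{\gamma_i}\subseteq \gamma_i\cdot N_{(Y,d)}(A;R)$ gives immediately $\gamma_i^{-1}(\Omega_{\gamma_i})\subseteq N_{(Y,d)}(A;R)$, which is the containment required in the statement. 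It only remains to check that $\bigsqcup_i \Omega_{\gamma_i}=N_{(Y,\delta_\Gamma)}(A;r)$: the inclusion ``$\subseteq$'' is built into the definition, while for ``$\supseteq$'', given any $y\in N_{(Y,\delta_\Gamma)}(A;r)$, Lemma~\ref{lem:balls in finite products} provides some $\gamma_i\in B_\Gamma(r)$ with $y\in \gamma_i\cdot N_{(Y,d)}(A;R)$, and taking the smallest such index places $y$ in $\Omega_{\gamma_i}$. There is no real obstacle in this argument; the only point to be mildly careful about is measurability, which is handled uniformly by the remark that $\Gamma$ acts by homeomorphisms of $Y$.
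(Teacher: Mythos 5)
Your proof is correct and is essentially identical to the paper's: both define $\Omega_{\gamma_i}$ as $\bigl(\gamma_i\cdot N_{(Y,d)}(A;R)\bigr)\cap N_{(Y,\delta_\Gamma)}(A;r)$ with the previously used pieces removed, and both invoke Lemma~\ref{lem:balls in finite products} for the covering. Your extra remarks on measurability via the action being by homeomorphisms are a welcome, if minor, elaboration.
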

\begin{proof}
 Let
 \[
  \Omega_{\gamma}'
  \coloneqq \paren{\gamma\cdot N_{\paren{Y,d}}\paren{A;R}} \cap N_{\paren{Y,\delta_\Gamma}}\paren{A;r}.
 \]
 Enumerate the elements in $B_\Gamma(r)$ as $\gamma_1,\ldots,\gamma_n$ and let $\Omega_{\gamma_i}\coloneqq \Omega_{\gamma_i}'\smallsetminus(\bigcup_{j<i}\Omega_{\gamma_j})$. Observe that these sets are Borel, pairwise disjoint, and they cover $N_{\paren{Y,\delta_\Gamma}}\paren{A;r}$ by Lemma~\ref{lem:balls in finite products}.
\end{proof}

\begin{proof}[Proof of Proposition~\ref{prop:span generates fpstar}]
One of the inclusions is rather easy and holds in general: For every non-singular action $\alpha\colon\Gamma\curvearrowright Y$, the image $\pi_\alpha(\Gamma)$ of the Koopman representation lies in $\fpstar{{Y,\delta_\Gamma}}$. In fact,
for every $\gamma\in\Gamma$, it follows immediately from the definition of the warped metric that $\pi_\alpha(\gamma)$ has propagation bounded by $\ell(\gamma)$.
It follows that under Assumption~\ref{ass:Lipschitz action on proper},
 \(
 \Gamma\cdot \fpstar{{Y,d}}\subseteq \fpstar{{Y,\delta_\Gamma}}.
 \)

It remains to prove the other inclusion, which is the most interesting one.
Let $T\in B(\CH_{{Y,d}})$ be an operator with propagation at most $r$ with respect to the warped metric $\delta_\Gamma$, let $R=R(r)$ be as in Lemma~\ref{lem:balls in finite products}, and let $\varrho>0$ be a gauge witnessing the bounded geometry of 
$\paren{Y,d}$.
 Apply Lemma~\ref{lem:partitioning bded geometry spaces} with respect to $\varrho$ and $2R$ to define a measurable partition ${Y}=\bigsqcup_{i=1}^n Y_i$ with
 \begin{itemize}
  \item $Y_i=\bigsqcup_{z\in Z_i} U_{z,i}$;
  \item $Z_i$ countable for every $i$;
  \item $U_{z,i}$ Borel with $\diam_d(U_{z,i})\leq 4\varrho$;
  \item $d(U_{z,i},U_{z',i})> 2R$ for every $z,z'\in Z_i$ with $z \neq z'$.
 \end{itemize}
Here we used $2R$ to make sure that
 \begin{equation}\label{eq: disjoint nbhds}
     N_{\paren{Y,d}}(U_{z,i}; R)\cap N_{\paren{Y,d}}(U_{z',i}; R) = \emptyset \qquad \forall z\neq z'\in Z_i.
 \end{equation}
 
 Let $T_i\coloneqq  T\chi_{Y_i}$. Since $T=\sum_{i=1}^n T_i$ is a finite sum, it is enough to show that $T_i\in \Gamma\cdot \fpstar{{Y,d}}$ for each $i$.
 
 Note that $T_i=\sum_{z\in Z_i}T\chi_{U_{z,i}}$, where the sum is defined by SOT convergence. 
 Since $T$ has propagation $\leq r$, we have
 $ T\chi_{U_{z,i}}=\chi_{N_{\paren{Y,\delta_\Gamma}}(U_{z,i}; r)}T\chi_{U_{z,i}}$.
 For every $z\in Z_i$, use Lemma~\ref{lem:partitioning neighbourhoods} to decompose $N_{\paren{Y,\delta_\Gamma}}(U_{z,i}; r)$ as $\bigsqcup_{\gamma\in B_\Gamma(r)}\Omega_{z,i,\gamma}$
 where 
 \begin{equation}\label{eq: preimage of Omegas}
     \gamma^{-1}(\Omega_{z,i,\gamma})\subseteq N_{\paren{Y,d}}(U_{z,i}; R)
 \end{equation}
 for every $\gamma\in B_\Gamma(r)$. Hence we obtain
 \begin{equation}\label{eq: T_i as sum}
 T_i=\sum_{z\in Z_i}\sum_{\gamma\in B_\Gamma(r)}\chi_{\Omega_{z,i,\gamma}}T\chi_{U_{z, i}}.
 \end{equation}

Now, fix $\gamma\in B_\Gamma(r)$. Observe that \eqref{eq: disjoint nbhds} and \eqref{eq: preimage of Omegas} imply that 
 \[
 \Omega_{z,i,\gamma}\cap \Omega_{z',i,\gamma}=\emptyset \qquad \forall z\neq z'\in Z_i.
 \]
 Therefore, as $z\in Z_i$ vary, the operators $\chi_{\Omega_{z,i,\gamma}}T\chi_{U_{z, i}}$ are all orthogonal to one another and have norm bounded by $\norm{T}$. It follows that for every $\gamma\in B_\Gamma(r)$
 \[
  T_{\gamma,i}\coloneqq\sum_{z\in Z_i}\chi_{\Omega_{z,i,\gamma}}T\chi_{U_{z, i}}
 \]
 is a well-defined bounded operator. Since $B_\Gamma(r)$ is finite, we can safely exchange the order of summation in \eqref{eq: T_i as sum} to see that
 \[
 T_i=\sum_{\gamma\in B_\Gamma(r)}\sum_{z\in Z_i}\chi_{\Omega_{z,i,\gamma}}T\chi_{U_{z, i}} = \sum_{\gamma\in B_\Gamma(r)} T_{\gamma,i}.
 \]

  Then composing with the Koopman representation we obtain
 \[
  \pi_\alpha(\gamma^{-1}) T_{\gamma,i}=
  \sum_{z\in Z_i}\pi_\alpha(\gamma^{-1})\chi_{\Omega_{z,i,\gamma}}T\chi_{U_{z, i}}=
  \sum_{z\in Z_i}\chi_{\gamma^{-1}(\Omega_{z,i,\gamma})}\pi_\alpha(\gamma^{-1})T\chi_{U_{z, i}}.
 \]
 Note that the operators in the above is a sum are mutually orthogonal and each of them has propagation at most $R+4\varrho$ in  $\paren{Y,d}$ by construction. Therefore $\pi_\alpha(\gamma^{-1}) T_{\gamma,i}\in\fpstar{{Y,d}}$ and $T_{\gamma,i}\in \fpstar{\Gamma\curvearrowright Y}\circ \fpstar{{Y,d}}$.
\end{proof}

    The operators $T_i$, $T_{\gamma,i}$ etc.~appearing in the proof of Proposition~\ref{prop:span generates fpstar} are all of the form $\chi_A T\chi_B$. Since the projection $\chi_A$ commutes with $\rho(f)$ for every bounded Borel function $f$, it follows that whenever $T$ is locally compact, then all these operators are locally compact too.

Knowing this, it is now straightforward to prove Theorem \ref{thmA:roe alg of warped spaces}.

\begin{proof}[Proof of Theorem \ref{thmA:roe alg of warped spaces}]
 By Lemma~\ref{lem:roe alg is ideal}, $\roestar{Y, \delta_\Gamma}$ is an ideal in $\fpstar{Y, \delta_\Gamma}$. Since $\roestar{Y, d}$ is contained in $\roestar{Y, \delta_\Gamma}$ and $\fpstar{\Gamma\curvearrowright Y}$ lies in $\fpstar{Y, \delta_\Gamma}$, it follows that $\fpstar{\Gamma\curvearrowright Y}\circ \roestar{Y, d}$ is contained in $\fpstar{Y, \delta_\Gamma}$, and so is its $\C$-span. By (the proof of) Proposition~\ref{prop:span generates fpstar}, we already know that
 $\roestar{Y, \delta_\Gamma}$ is contained in $\Gamma\cdot \roestar{Y, d}$ , so equality follows.

 For the other equality, it is enough to observe that $\Gamma\cdot \roecstar{Y,d}$ is contained in the norm closure of $\Gamma\cdot\roestar{Y, d}$; hence
 \[
  \overline{\Gamma\cdot\roecstar{Y, d}}^{\,\norm{\cdot}}
  =\overline{\Gamma\cdot\roestar{Y, d}}^{\,\norm{\cdot}}=\roecstar{Y, \delta_\Gamma}
 \]
by the first part of the proof.
\end{proof}

Considering the natural $*$-homomorphism
\begin{equation}\label{eq:varPsi}
    \begin{tikzcd}[row sep=0]
        \varPsi\colon &[-3em]  \fpstar{Y,d}\rtimesalg\Gamma \arrow[r] & B(L^2 Y), \\
                    &(T,\gamma) \arrow[|->,r] & T\pi_\alpha(\gamma),
    \end{tikzcd}
\end{equation}

where the crossed product structure is that defined by the conjugation in $ B(L^2 Y)$, we deduce the following corollary of Proposition~\ref{prop:span generates fpstar}.

\begin{cor}\label{cor:roe algebras are images of crossed products}
 Under Assumption~\ref{ass:Lipschitz action on proper}, if $(Y,d)$ has bounded geometry, then we have surjective $\ast$\=/homomorphisms
 \[
  \fpstar{Y,d}\rtimesalg\Gamma\to \fpstar{Y,\delta_\Gamma}
 \]
 \[
  \roestar{Y,d}\rtimesalg\Gamma\to \roestar{Y,\delta_\Gamma}.
 \]
 Taking the completions, we may thus identify $\fpcstar{Y,\delta_\Gamma}$ and $\roecstar{Y,\delta_\Gamma}$ with quotients of certain $C^*$-crossed products. Under Assumption~\ref{ass:ample module}, the latter algebra is the Roe algebra of $(Y,\delta_\Gamma)$.
\end{cor}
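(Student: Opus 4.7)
The proof is essentially an assembly of the tools already developed, so the plan is mostly bookkeeping.

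First, I would appeal to Lemma~\ref{lem:the action normalizes} to see that the Koopman representation $\pi_\alpha$ normalizes both $\fpstar{Y,d}$ and $\roestar{Y,d}$, so that the conjugation action of $\Gamma$ endows these $\ast$\=/algebras with $\Gamma$\=/actions by $\ast$\=/automorphisms. Hence the algebraic crossed products $\fpstar{Y,d}\rtimesalg\Gamma$ and $\roestar{Y,d}\rtimesalg\Gamma$ are well-defined, and the pairs (inclusion, $\pi_\alpha$) are covariant representations on $L^2Y$. The resulting $\ast$\=/representations are given by the formula $(T,\gamma)\mapsto T\pi_\alpha(\gamma)$ from \eqref{eq:varPsi}, showing that $\varPsi$ is a $\ast$\=/homomorphism (and the restriction to the Roe-$\ast$ subalgebra is too).

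Next, as recalled in Section~\ref{ssec:crossed products}, the image of such a crossed-product representation is precisely the span $\Gamma\cdot\fpstar{Y,d}$ in the first case, and $\Gamma\cdot\roestar{Y,d}$ in the second. By Proposition~\ref{prop:span generates fpstar}, the former coincides with $\fpstar{Y,\delta_\Gamma}$, and by Theorem~\ref{thmA:roe alg of warped spaces}, the latter coincides with $\roestar{Y,\delta_\Gamma}$. This establishes surjectivity of both $\ast$\=/homomorphisms.

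Finally, for the completions, I would use the universal property of the maximal crossed product from the end of Section~\ref{ssec:crossed products}: the $\ast$\=/homomorphisms above extend uniquely to $\ast$\=/homomorphisms from $\fpcstar{Y,d}\rtimesmax\Gamma$ and $\roecstarmax{Y,d}\rtimesmax\Gamma$ into $B(L^2Y)$. Their images contain the dense $\ast$\=/subalgebras $\fpstar{Y,\delta_\Gamma}$ and $\roestar{Y,\delta_\Gamma}$ of $\fpcstar{Y,\delta_\Gamma}$ and $\roecstar{Y,\delta_\Gamma}$, respectively, so the induced maps onto the norm closures of the images are quotient maps of $C^*$-algebras. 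Under Assumption~\ref{ass:ample module}, $\roecstar{Y,\delta_\Gamma}$ is the Roe algebra of the warped space, as desired; this is the only point where ampleness enters. There is no substantial obstacle here, since the hard content---the identification $\Gamma\cdot\fpstar{Y,d}=\fpstar{Y,\delta_\Gamma}$ and its Roe-algebra counterpart---has already been carried out in Proposition~\ref{prop:span generates fpstar} and Theorem~\ref{thmA:roe alg of warped spaces}.
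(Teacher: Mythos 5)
Your proposal is correct and follows essentially the same route as the paper, which deduces this corollary directly from Proposition~\ref{prop:span generates fpstar} and Theorem~\ref{thmA:roe alg of warped spaces} together with the correspondence between covariant representations and representations of the algebraic crossed product from Section~\ref{ssec:crossed products}. The passage to completions via the universal property of the maximal crossed product, using that the image of a $C^*$-algebra under a $\ast$\=/homomorphism is closed, is exactly the intended reading of ``quotients of certain $C^*$\=/crossed products.''
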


Under the above assumptions, the warped cone has bounded geometry, so we may consider its maximal Roe algebra. Observe that the action of $\Gamma$ on $\roestar{Y,d}$ extends to a $\Gamma$-action on $\roecstarmax{Y,d}$, so the full crossed product $\roecstarmax{Y,d}\rtimesmax\Gamma$ is well defined. The following is a formal consequence of the definitions.

\begin{cor}[Corollary~\ref{corA:warped roe as crossed product-maximal}]\label{cor:roe algebras are images of crossed products-maximal}
    With the same assumptions as in Corollary~\ref{cor:roe algebras are images of crossed products}, we have a surjective $\ast$-homomorphism
    \[
    \roecstarmax{Y,d}\rtimesmax\Gamma\to \roecstarmax{Y,\delta_\Gamma}.
    \]
\end{cor}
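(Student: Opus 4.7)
The plan is to deduce the maximal version from Corollary~\ref{cor:roe algebras are images of crossed products} via the universal properties of the maximal Roe algebra (Definition~\ref{def:maximal Roe algebra}) and the full crossed product. More precisely, I will construct $\tilde\varPsi$ as the integrated form of a covariant pair $(\pi_A, u)$ of $(\roecstarmax{Y,d}, \Gamma)$ with values in $\roecstarmax{Y,\delta_\Gamma}$ and its multiplier algebra. First, I would check that $\roecstarmax{Y,d}\rtimesmax\Gamma$ is well-defined: by Lemma~\ref{lem:the action normalizes}, conjugation by $\pi_\alpha(\gamma)$ is a $\ast$-automorphism of $\roestar{Y,d}$, and by the universal property of the maximal Roe algebra each such automorphism extends uniquely to $\roecstarmax{Y,d}$. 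Similarly, the inclusion $\roestar{Y,d}\subseteq\roestar{Y,\delta_\Gamma}\subseteq \roecstarmax{Y,\delta_\Gamma}$ extends to the desired $\ast$-homomorphism $\pi_A\colon \roecstarmax{Y,d}\to \roecstarmax{Y,\delta_\Gamma}$.

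The main obstacle is constructing the unitaries $u_\gamma$. They must live in $M(\roecstarmax{Y,\delta_\Gamma})$ rather than in $\roecstarmax{Y,\delta_\Gamma}$ itself, since the Koopman operators $\pi_\alpha(\gamma)$ are not locally compact. My approach is to show that left multiplication $L_\gamma(a)=\pi_\alpha(\gamma)a$ and right multiplication $R_\gamma(a)=a\pi_\alpha(\gamma)$ are isometric on $\roestar{Y,\delta_\Gamma}$ with respect to the maximal Roe norm. The $C^*$-identity gives
\[
\|L_\gamma(a)\|_{\rm max}^2 = \|L_\gamma(a)L_\gamma(a)^*\|_{\rm max} = \|\alpha_\gamma(aa^*)\|_{\rm max} = \|a\|_{\rm max}^2,
\]
where I use that $\alpha_\gamma$ is a $\ast$-automorphism of $\roestar{Y,\delta_\Gamma}$ (an easy check analogous to Lemma~\ref{lem:the action normalizes}, exploiting that $\pi_\alpha(\gamma)\in\fpstar{Y,\delta_\Gamma}$ normalizes locally compact operators) and hence max-isometric. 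After extending $L_\gamma$ and $R_\gamma$ by continuity to $\roecstarmax{Y,\delta_\Gamma}$, the multiplier-pair relation $aL_\gamma(b)=R_\gamma(a)b$ extends likewise, producing a unitary $u_\gamma \in M(\roecstarmax{Y,\delta_\Gamma})$. The group-homomorphism property of $\gamma\mapsto u_\gamma$ and the covariance relation $u_\gamma\pi_A(a)u_\gamma^*=\pi_A(\alpha_\gamma(a))$ both hold by inspection on the dense subalgebra $\roestar{Y,d}$ and extend by density/continuity.

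Once $(\pi_A, u)$ is built, the universal property of the full crossed product yields the required $\ast$-homomorphism $\tilde\varPsi\colon \roecstarmax{Y,d}\rtimesmax\Gamma \to \roecstarmax{Y,\delta_\Gamma}$, sending $(a,\gamma)\mapsto \pi_A(a)u_\gamma$. Surjectivity then follows because $\tilde\varPsi$ extends the algebraic $\varPsi$ from Corollary~\ref{cor:roe algebras are images of crossed products}, so its image contains the dense subalgebra $\roestar{Y,\delta_\Gamma}=\Gamma\cdot\roestar{Y,d}$ (by Theorem~\ref{thmA:roe alg of warped spaces}); since the image of any $\ast$-homomorphism between $C^*$-algebras is closed, we conclude.
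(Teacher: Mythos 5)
Your proof is correct and follows essentially the same route as the paper: extend the inclusion $\roestar{Y,d}\hookrightarrow\roestar{Y,\delta_\Gamma}\subseteq\roecstarmax{Y,\delta_\Gamma}$ via the universal property of the maximal Roe algebra, then invoke the universal property of the full crossed product, and conclude surjectivity from dense image plus closedness of images of $\ast$-homomorphisms. Your explicit construction of the unitaries $u_\gamma$ in the multiplier algebra (via max-isometry of $L_\gamma$, $R_\gamma$) carefully fills in the step that the paper dismisses as ``immediate,'' but it is the same argument.
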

\begin{proof}
    The universal property of maximal Roe algebras implies that the $\ast$-homomorphism $\roestar{Y,d}\to \roestar{Y,\delta_\Gamma}$ induced by $\varPsi$ lifts to a $\ast$\=/homomorphism from $\roecstarmax{Y,d}$ to $\roecstarmax{Y,\delta_\Gamma}$. 
    Since $\varPsi$ is a homomorphism of $\roestar{Y,d}\rtimesalg\Gamma$ and $\roestar{Y,d}$ is dense in $\roecstarmax{Y,d}$, it is immediate that this extension defines a $\ast$-homomorphism 
    \[
    \roecstarmax{Y,d}\rtimesalg \Gamma\to\roecstarmax{Y,\delta_\Gamma},
    \]
    which then lifts to a $\ast$-homomorphism of the full crossed product by the universal property.
    Since $\roestar{Y,d}\rtimesalg\Gamma\to \roestar{Y,\delta_\Gamma}$ is surjective, the $\ast$-homomorphism thus defined has dense image, and is therefore surjective.
\end{proof}

\begin{rmk}
 Note that the above homomorphisms are basically never injective: It is usually the case that there are operators $T\in\fpstar{Y,d}$ and $\gamma\in\Gamma$ so that $T'\coloneqq T\pi_\alpha(\gamma)$ still belongs to $ \fpstar{Y,d}$. In this case, the difference $(T,\gamma)-(T',e)$ is in the kernel.
\end{rmk}

\begin{rmk}
    The Lipschitz condition in Assumption~\ref{ass:Lipschitz action on proper} could be weakened to $\alpha$ being an action by continuous coarse equivalences: The continuity condition is useful to prove Lemma~\ref{lem:the action normalizes}, while the coarse equivalence condition is enough to ensure that the warped metric still has bounded geometry (Remark~\ref{rmk:action by coarse equivalence suffices}). On the other hand, it is convenient to restrict to Lipschitz actions, as it implies both conditions at once. 
\end{rmk}

\section{ROE ALGEBRAS OF WARPED CONES} \label{sec:warped cones}
We now consider the relation between algebras of operators with finite dynamical propagation and Roe algebras of warped cones. 

First we need to recall some facts on tensor products, as covered in e.g.~\cite[Section 1.8]{willett2020higher}, \cite[Chapter 3]{brownozawa2008}. Given $C^*$-algebras $A$ and $B$, the algebraic tensor product $A\odot B$ is naturally equipped with a spatial norm defined as follows: Given two faithful representations $\pi_A\colon A\to  B(\CH_A)$ and $\pi_B\colon B\to  B(\CH_B)$, the formula
\[
 \pi_A\otimes\pi_B (a\otimes b) (\xi_A\otimes\xi_B)\coloneqq \pi_A(a)\xi_A\otimes \pi_B(b)\xi_B
\]
defines a tensor product representation $\pi_A\otimes\pi_B\colon A\odot B\to B(\CH_A\otimes\CH_B)$. The spatial norm is the pull-back of the operator norm on $B(\CH_A\otimes\CH_B)$. The completion of $A\odot B$ is the spatial tensor product, denoted $A\otimes B$.
Note that if $Y_A$ and $Y_B$ are locally compact topological spaces then
\(
C_0(Y_A\times Y_B)\cong C_0(Y_A)\otimes C_0(Y_B).
\)
It is also the case that $\CK(\CH_A\otimes\CH_B)\cong \CK(\CH_A)\otimes\CK(\CH_B)$,
where $\CK$ is the $C^*$-algebra of compact operators.

We now introduce (warped) cones. Let $(X,d)$ be a compact metric space of diameter at most $2$. For $t\geq 1$, let $d^t(x,y)\coloneqq td(x,y)$ be the rescaling of $d$ by $t$.
The \emph{open cone} is the metric space $\CO X\coloneqq (X\times \halfline,d_\CO)$, where
\[
 d_\CO\bigparen{(x_1,t_1),(x_2,t_2)}
 \coloneqq d^{t_1\wedge t_2}(x_1,x_2) + \abs{t_1-t_2},
\]
with $t_1\wedge t_2=\min\braces{t_1,t_2}$.
This formula defines a metric because $d$ has diameter at most $2$. The metric is proper because $X$ is compact.
Basic geometric information about (warped) cones can be found in \cite{SawickiThesis, Vig18}.

Let $\rho_X\colon C_0(X) = C(X)\to B(\CH_X)$ and $\rho_{\halfline}\colon C_0(\halfline)\to B(\CH_{\halfline})$ define geometric modules for $X$ and $\halfline$, respectively.
Topologically, the open cone $\CO X$ is homeomorphic to $X\times \halfline$; hence
\[
C_0(\CO X)=C_0(X\times \halfline)\cong C(X)\otimes C_0(\halfline).
\]
Consider $\CH_{\CO X} \coloneqq \CH_X \otimes \CH_{\halfline}$ with the tensor product representation $\rho_{\CO X}\coloneqq \rho_X\otimes\rho_{\halfline}\colon C_0(\CO X)\to\B\paren{\CH_{\CO X}}$.

\begin{lem}\label{lem:geometric OX module}
 The representation $\rho_{\CO X}\colon C_0(\CO X) \to  B(\CH_{\CO X})$ is a geometric module for $\CO X$. If both $\CH_X$ and $\CH_{\halfline}$ are faithful and at least one of them is ample, then $\CH_{\CO X}$ is ample.
\end{lem}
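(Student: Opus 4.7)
The first claim---that $\rho_{\CO X}$ is a geometric module for $\CO X$---is essentially formal. I would verify that $\CH_X \otimes \CH_{\halfline}$ is separable (countable dense subsets yield a countable dense subset of elementary tensors) and that $\rho_X \otimes \rho_{\halfline}$ is a non-degenerate $*$\=/representation: vectors of the form $\rho_X(g)\xi \otimes \rho_{\halfline}(h)\eta$ span a dense subspace of $\CH_X \otimes \CH_{\halfline}$ by non-degeneracy of each factor separately, and they lie in the range of $\rho_{\CO X}$ when $g \otimes h$ is identified with a function in $C_0(\CO X)$. Combined with the already-noted identification $C_0(\CO X) \cong C(X) \otimes C_0(\halfline)$, this presents $\rho_{\CO X}$ as a geometric $\CO X$-module.

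The ampleness claim is the substantive part, and the main technical step is the following ``compression lemma'': if $S \in B(\CH_X)$ is non-zero and $T \in B(\CH_{\halfline})$ is non-compact, then $S \otimes T \in B(\CH_X \otimes \CH_{\halfline})$ is non-compact. I would prove this directly. Pick $\xi \in \CH_X$ with $S\xi \neq 0$, and set $\zeta \coloneqq S\xi / \norm{S\xi}$. Then the bounded maps $V, W \colon \CH_{\halfline} \to \CH_X \otimes \CH_{\halfline}$ defined by $V\eta \coloneqq \xi \otimes \eta$ and $W\eta \coloneqq \zeta \otimes \eta$ satisfy
\[
    W^*(S \otimes T) V = \norm{S\xi} \cdot T
\]
by a direct calculation on elementary tensors (using that $\zeta$ is a unit vector). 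Since the compact operators form a two-sided ideal, compactness of $S \otimes T$ would force $T$ to be compact, contrary to hypothesis.

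To conclude ampleness of $\rho_{\CO X}$, I would argue by contradiction: suppose $\rho_{\CO X}(f)$ is compact for some non-zero $f \in C_0(\CO X)$. The set
\[
    J \coloneqq \{g \in C_0(\CO X) \mid \rho_{\CO X}(g) \in \CK(\CH_{\CO X})\}
\]
is a closed two-sided ideal of $C_0(\CO X)$, hence by the Gelfand correspondence between closed ideals and open subsets, $J = \{g \mid g|_{\CO X \smallsetminus U} = 0\}$ for some open $U \subseteq \CO X$. Since $f \in J$ is non-zero, $U$ is non-empty, so I can find a non-empty basic open rectangle $U_X \times V \subseteq U$ with $U_X \subseteq X$ and $V \subseteq \halfline$. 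Pick non-zero $g \in C(X)$ supported in $U_X$ and non-zero $h \in C_0(\halfline)$ supported in $V$; then $g \otimes h \in J$, so $\rho_X(g) \otimes \rho_{\halfline}(h)$ is compact. But $\rho_X(g) \neq 0$ by faithfulness of $\rho_X$, and assuming WLOG that $\CH_{\halfline}$ is ample, $\rho_{\halfline}(h)$ is non-compact. The compression lemma then forces $\rho_X(g) \otimes \rho_{\halfline}(h)$ to be non-compact, a contradiction. The case where $\CH_X$ is the ample factor is symmetric. The only real content of the argument is the compression lemma; the rest is a standard application of Gelfand duality to the ideal $J$.
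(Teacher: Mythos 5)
Your proof is correct, and its overall shape matches the paper's: both reduce ampleness to showing that some $\rho_X(g)\otimes\rho_{\halfline}(h)$ with $g\neq 0$ and $h\neq 0$ would have to be compact, and then contradict this via the fact that the tensor product of a non-zero operator with a non-compact operator is non-compact. The differences are in how you get there and in what you actually prove. Where the paper passes from $f$ to $g\otimes h$ by an explicit factorization (choosing $c>0$ and an open rectangle $U\times V$ on which $f\bar f\geq c$, and producing $f'$ with $f\bar f f'=g\otimes h$, so that compactness of $\rho_{\CO X}(f)$ propagates to $\rho_{\CO X}(g\otimes h)$), you instead note that $J=\rho_{\CO X}^{-1}\bigparen{\CK(\CH_{\CO X})}$ is a closed ideal of $C_0(\CO X)$, hence of the form $C_0(U)$ by Gelfand duality, and pick $g\otimes h$ supported in a basic rectangle inside $U$; both routes work, and yours sidesteps the small continuity check needed for the quotient function $f'$. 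More substantially, you supply a complete proof of the key analytic step via the compression identity $W^*(S\otimes T)V=\norm{S\xi}\,T$, whereas the paper merely asserts that ``the tensor product of a non-zero and a non-compact operator is not compact''; isolating and proving this is a genuine improvement in completeness. Your treatment of non-degeneracy (density of the span of the vectors $\rho_X(g)\xi\otimes\rho_{\halfline}(h)\eta$) is the dual formulation of the paper's argument, which instead shows that any vector annihilated by all $\rho_{\CO X}(f)$ must vanish, using that the identity operators lie in the strong closures of $\rho_X(C(X))$ and $\rho_{\halfline}(C_0(\halfline))$; both are standard and correct, and you also record separability, which the paper's definition of a geometric module requires but whose verification it omits.
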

\begin{proof}
To show non-degeneracy, let $\xi \in \CH_{\CO X}$ such that $\rho_{\CO X}(f)\xi = 0$ for all $f \in C_0(\CO X)$.
Let $\xi'=\sum_{i=1}^n v_i \tensor w_i$ be a finite sum of elementary tensors with $\norm{ \xi - \xi' }<\epsilon$.
Then for every $g \in C(X)$ and $h \in C_0(\halfline)$, we have
\[
 \rho_{\CO X}(g\otimes h)\xi' =\sum_{i=1}^n \rho_{X}(g) v_i \tensor \rho_{\halfline}(h) w_i.
\]
and $\norm{\rho_{\CO X}(g\otimes h)\xi'}<\epsilon$.

Since the identity operators on $ B(\CH_X)$ and on $B(\CH_{\halfline})$ belong to the strong closures of $C(X)$ and $C_0(\halfline)$, we can pick $g$ and $h$ such that $\norm{\rho_{X}(g) v_i - v_i}<\epsilon'$ and $\norm{\rho_{\halfline}(h )w_i - w_i}<\epsilon'$ for all $i$.
Choosing $\epsilon'$ sufficiently small, it follows that $\norm{\sum_{i=1}^n \rho_{X}(g) v_i \tensor \rho_{\halfline}(h) w_i - \xi'} < \epsilon$. Therefore, $\norm {\xi'} < 2\epsilon$. It follows that $\norm{\xi}<3\epsilon$. Since this holds for all $\epsilon > 0$, we conclude that $\xi = 0$, and $\CH_{\CO X}$ is indeed non-degenerate.

Let now $\CH_X$ and $\CH_{\halfline}$ be faithful, and assume that one of them is ample. Fix $f \in C_0(\CO X)$ non-zero.
Then $f\bar f$ is non-negative on $\CO X$, and there are some non-empty open sets $U\subseteq X$ and $V\subseteq \halfline$ with $f\bar f \geq c$ on $U\times V$ for some $c>0$.
Find non-zero $g\in C(X)$ and $h\in C_0(\halfline)$ supported on $U$ and $V$ respectively.
Then there is $f' \in C_0(\CO X)$ with $f\bar f f' = g \tensor h$.
Now, if $\rho_{\CO X}(f)$ was compact, then also also $\rho_{\CO X} (g\tensor h)=\rho_{\CO X}(f)\rho_{\CO X} (\bar ff')$ would be compact.
But $\rho_{\CO X}(g \tensor h)$ is the tensor product of a non-zero and a non-compact operator, and therefore not compact.
\end{proof}

Note that if $\mu$ is a measure of full support on $X$ (i.e.~so that non-empty open sets have positive measure), then $L^2 X$ is a faithful representation of $C(X)$. Since $L^2\halfline$ (with the Lebesgue measure $\lambda$) is an ample $\halfline$-module, Lemma~\ref{lem:geometric OX module} has the following consequence.

\begin{cor}
 Suppose that $\mu$ has full support on $X$. Then the representation $M_\bullet\colon C_0(\CO X)\to L^2(X\times \halfline, \mu\times \lambda)$ given by multiplication operators is an ample geometric $\CO X$-module.
\end{cor}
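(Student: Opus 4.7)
The plan is to reduce the statement to Lemma~\ref{lem:geometric OX module} by identifying the multiplication representation on $L^2(X\times\halfline,\mu\times\lambda)$ with a tensor product of a faithful multiplication representation of $C(X)$ on $L^2X$ and an ample multiplication representation of $C_0(\halfline)$ on $L^2\halfline$.

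First I would set up the Hilbert space identification. The canonical unitary $L^2(X,\mu)\otimes L^2(\halfline,\lambda)\cong L^2(X\times\halfline,\mu\times\lambda)$ sending $\xi\otimes\eta$ to the function $(x,t)\mapsto \xi(x)\eta(t)$ intertwines the multiplication representation $M_\bullet\colon C_0(\CO X)\to B(L^2(X\times\halfline))$ with the tensor product representation of $C(X)\otimes C_0(\halfline)\cong C_0(\CO X)$ obtained from multiplication on each factor. So $M_\bullet$ is exactly the representation $\rho_X\otimes\rho_\halfline$ with $\rho_X$ and $\rho_\halfline$ the multiplication representations on $L^2X$ and $L^2\halfline$ respectively.

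Next I would verify the hypotheses of Lemma~\ref{lem:geometric OX module}. For faithfulness of $\rho_X$: if $f\in C(X)$ acts as zero on $L^2(X,\mu)$, then in particular $M_f\mathbbm 1_X=f=0$ in $L^2(X,\mu)$; since $\mu$ has full support, any continuous function which is zero $\mu$-a.e.\ must vanish identically, so $f=0$. For ampleness of $\rho_\halfline$: this is a standard fact, but if needed it can be seen directly---for any nonzero $h\in C_0(\halfline)$ there is an interval $[a,b]\subset\halfline$ on which $\abs{h}\geq c>0$, and the multiplication operator $M_h$ restricted to $L^2[a,b]$ is bounded below, so its image contains the infinite-dimensional subspace $L^2[a,b]$ as a closed subspace, which rules out compactness.

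With these two facts in hand, Lemma~\ref{lem:geometric OX module} applies directly to conclude that $\CH_{\CO X}=L^2X\otimes L^2\halfline\cong L^2(X\times\halfline,\mu\times\lambda)$ with the representation $M_\bullet$ is a non-degenerate and ample geometric $\CO X$-module. There is no substantial obstacle here; the only point requiring a moment's care is the faithfulness of $\rho_X$, where the full support hypothesis is used in an essential way (without it a non-trivial continuous function supported on the complement of $\supp(\mu)$ would be sent to $0$).
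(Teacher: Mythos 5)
Your proposal is correct and follows exactly the paper's route: the paper likewise deduces the corollary from Lemma~\ref{lem:geometric OX module} by observing that full support of $\mu$ makes the multiplication representation of $C(X)$ on $L^2X$ faithful and that $L^2\halfline$ with Lebesgue measure is an ample $\halfline$-module. You merely spell out details the paper leaves implicit (the unitary $L^2X\otimes L^2\halfline\cong L^2(X\times\halfline)$ and the ampleness of multiplication on $L^2\halfline$), all of which are fine; note also that ampleness of $\rho_{\halfline}$ already gives its faithfulness, so both hypotheses of the lemma are indeed verified.
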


Now, let $\alpha\colon \Gamma\curvearrowright X$ be a continuous action of a countable group $\Gamma$ with a symmetric proper length function $\ell$. The action $\alpha\times \id$ extends $\alpha$ to a continuous action on $\CO X$.

\begin{de}
The \emph{unified warped cone} is the metric space $(\CO_\Gamma X,\delta_\Gamma)$, obtained by warping the metric $d_{\CO X}$ along the $\Gamma$-action $\alpha\times \id$.
\end{de}

As usual, warping the metric does not change the underlying topology. Hence, the geometric $\CO X$-module $\CH_{\CO X}$ is a geometric $\CO_\Gamma X$-module as well, and ampleness is also preserved.

We now study Roe algebras of warped cones. From now on, we will work under the following assumption.

\begin{assumption}\label{ass:warped cones}
 $(X,d,\mu)$ is a compact metric space of diameter at most $2$ with a $\sigma$-finite measure of full support, and $\alpha\colon\Gamma\curvearrowright X$ is a non-singular Lipschitz action.
\end{assumption}

In this setting, $\CH_{\CO X}\coloneqq L^2(X\times \halfline)$ is an ample $\CO_\Gamma X$-module with the representation $M_\bullet$ by multiplication operators, and both Assumption~\ref{ass:Lipschitz action on proper} and \ref{ass:ample module} are satisfied. This can now be used to construct the Roe algebra $\roecstar{\CO_\Gamma X}$, and all the results proved in the previous sections hold true. We use notation analogous to Section~\ref{sec:roe alg warped spaces} as follows.

\begin{notation}\label{notation module cones}
 We set the geometric modules $\CH_{\CO X}$ and $\CH_{\CO_\Gamma X}$ to be $L^2(X\times \halfline)$ and simply write $\fpstar{\CO X}$, $\fpcstar{\CO X}$, $\roestar{\CO X}$ and $\roecstar{\CO X}$ for the associated subalgebras of $ B(L^2(X\times \halfline))$. We use analogous conventions for $\CO_\Gamma X$ and we also let $\CK(\CO X)=\CK(\CO_\Gamma X)=\CK(L^2(X\times \halfline))$ be the compact operators.
\end{notation}

Since the warped cone is defined by warping the cone $\CO X$ along the action $\alpha\times\id\colon\Gamma\curvearrowright\CO X$, Proposition~\ref{prop:span generates fpstar} and Theorem~\ref{thmA:roe alg of warped spaces} imply that
\begin{align*}
    \fpstar{\CO_\Gamma X} &= \pi_{\alpha\times\id}(\Gamma)\cdot \fpstar{\CO X},\\
    \fpcstar{\CO_\Gamma X} &= \overline{\pi_{\alpha\times\id}(\Gamma)\cdot\fpcstar{\CO X}}^{\,\norm{\cdot}},\\
    \roestar{\CO_\Gamma X} &= \pi_{\alpha\times\id}(\Gamma)\cdot \roestar{\CO X},\\
    \roecstar{\CO_\Gamma X} &= \overline{\pi_{\alpha\times\id}(\Gamma)\cdot\roecstar{\CO X}}^{\,\norm{\cdot}}.
\end{align*}

It is interesting to revisit these identities in terms of tensor products. Namely, consider the following tensor product homomorphism of the spatial tensor product:
\[
  B(L^2 X)\otimes B(L^2\halfline)\to  B(L^2 X\otimes L^2\halfline)= B(\CH_{\CO X}).
\]
This restricts to a $\ast$\=/homomorphism of the algebraic tensor product
 \[
 \fpstar{\Gamma\curvearrowright X}\odot \fpstar{L^2\halfline}\to \fpstar{\CO_\Gamma X}.
 \]
Here we can also recognise the Koopman representation of the action $\alpha\colon\Gamma\curvearrowright X$, noting that $\pi_{\alpha\times \id}(\gamma)=\pi_{\alpha}(\gamma)\otimes 1$.

On the other hand, the tensor product interpretation does not interact well with the assumptions of local compactness. Namely, in order to obtain a homomorphism into $\roestar{\CO_\Gamma X}$, one is forced to restrict to
 \[
 \bigparen{\fpstar{\Gamma\curvearrowright X}\cap\CK(L^2 X)}\odot \roestar{L^2\halfline}\to \roestar{\CO_\Gamma X}.
 \]
However, the image of this homomorphism ``sees'' very little of the Roe $*$\=/algebra $\roestar{\CO_\Gamma X}$ in general. For instance, if the action is ergodic but not strongly ergodic it follows from Corollary~\ref{corA:strong ergodicity} that $\fpstar{\Gamma\curvearrowright X}\cap\CK(L^2 X)=\{0\}$, so the homomorphism is trivial.

We now aim to understand in the context of warped cones the kernel of the $*$-homomorphism
\[
\varPsi\colon \fpstar{Y,d}\rtimesalg\Gamma \to B(L^2 Y),
\]
as defined in \eqref{eq:varPsi} (see Corollary~\ref{cor:roe algebras are images of crossed products}). 
To do this, we assume an additional regularity condition on the open cone, namely the operator norm localization property from \cite{CTWY}. 

\begin{de}
    A $Y$-module $\CH_Y$ has the \emph{operator norm localization property} (\emph{ONL property}) with constant $0<c\leq 1$ if for every $r>0$, there is $R=R(r)$ such that if $T\in B(\CH_Y)$ is an operator of propagation at most $r$, then there exists a measurable $A\subseteq Y$ with $\diam(A)\leq R$ and $\norm{\chi_AT}\geq c\norm{T}$.
\end{de}

\begin{rmk}
    The above is a slight rephrasing of {\cite[Definition 2.2]{CTWY}} applied to the adjoint of $T$. Note that ample modules of most ``reasonable'' spaces have the ONL property. In particular, this is the case for $\CO X$ whenever $X$ is a compact Riemannian manifold. In fact, it is easy to show that such a cone has property A, and hence the ONL property. This can be shown directly, by using that the ball of radius $r$ around a point $(x,t)\in \CO X$ looks more and more like a ball in $\RR^{\dim(X)}\times \RR$ as $t$ increases (they are bi-Lipschitz equivalent with arbitrarily good constants). Alternatively, one can also observe that $\CO X$ has asymptotic dimension $\dim(X)+1$ and use that finite asymptotic dimension implies property A; see e.g.\ \cite[Section 11.5]{Roe03}.
    We refer to \cite{CTWY,BNSW,Sak} for more information on the ONL property and its relation with e.g.~Yu's property A.
\end{rmk}

Given a cone $\CO X$ and $1\leq a \leq b\leq \infty$, let $\chi_{[a,b]}\in B(\CH_{\CO X})$ be the projection associated with the truncated cone $X\times [a,b]\subseteq \CO X$.
We need a preliminary observation.

\begin{lem}\label{lem:locally cpt not cpt}
    Let $T\in B(\CH_{\CO X})$ be locally compact. If $T$  is not compact, there is an $\epsilon>0$ such that $\norm{T\chi_{[t,\infty]}} \geq \epsilon$ for every $t\geq 1$.
\end{lem}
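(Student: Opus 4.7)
I would argue by contrapositive: assuming that $\inf_{t\geq 1}\norm{T\chi_{[t,\infty)}}=0$, I will deduce that $T$ is compact. Note first that the function $t \mapsto \norm{T\chi_{[t,\infty)}}$ is non-increasing in $t$, since for $s\leq t$ one has $\chi_{[t,\infty)} = \chi_{[s,\infty)}\chi_{[t,\infty)}$, giving $\norm{T\chi_{[t,\infty)}}\leq \norm{T\chi_{[s,\infty)}}$. Hence the infimum equals the limit as $t \to \infty$; if I can show that this limit being $0$ forces $T$ to be compact, then in the non-compact case the infimum is some $\epsilon > 0$, and monotonicity gives the uniform bound $\norm{T\chi_{[t,\infty)}}\geq \epsilon$ for all $t\geq 1$.

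\textbf{Key step: $T\chi_{[1,t]}$ is compact for every $t\geq 1$.} The truncated cone $X\times [1,t]$ is a compact subset of $\CO X$ (as $X$ is compact). By Urysohn's lemma, I can pick $f\in C_c(\CO X)$ with $f\equiv 1$ on $X\times[1,t]$, so that $f \cdot \mathbbm{1}_{X\times[1,t]} = \mathbbm{1}_{X\times[1,t]}$. Using the canonical extension of $\rho$ to bounded Borel functions (as recalled in Section~\ref{ssec:roe algebras}), this gives $\rho(f)\chi_{[1,t]} = \chi_{[1,t]}$, and hence
\[
T\chi_{[1,t]} = T\rho(f)\chi_{[1,t]}.
\]
Since $T$ is locally compact, $T\rho(f)$ is compact, and so is $T\chi_{[1,t]}$.

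\textbf{Conclusion.} Since $X\times\{t\}$ has $(\mu\times\lambda)$-measure zero, $\chi_{[1,t]}+\chi_{[t,\infty)}=1$ as projections on $L^2(X\times\halfline)$, so
\[
T = T\chi_{[1,t]} + T\chi_{[t,\infty)}.
\]
If $\norm{T\chi_{[t,\infty)}}\to 0$ along a sequence $t_n\to\infty$, then $T$ is a norm limit of the compact operators $T\chi_{[1,t_n]}$, and therefore compact. This contradicts the assumption that $T$ is not compact, completing the argument. The only mildly delicate point is the passage from $f\in C_c(\CO X)$ to the Borel projection $\chi_{[1,t]}$, which is handled cleanly by the Borel functional calculus for the non-degenerate representation $\rho$; no serious obstacle is expected.
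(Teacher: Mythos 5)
Your proposal is correct and follows essentially the same route as the paper: decompose $T = T\chi_{[1,t)} + T\chi_{[t,\infty)}$, observe that the first summand is compact by local compactness, and conclude that if the second summand's norm tended to $0$ then $T$ would be a norm limit of compact operators. Your explicit Urysohn/Borel-calculus justification of the compactness of $T\chi_{[1,t]}$ and the monotonicity remark merely fill in details the paper leaves implicit.
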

\begin{proof}
    Note that $T= T\chi_{[1,t)}+T\chi_{[t,\infty)}$. The first summand is always a compact operator by the local compactness assumption. If the norm of the second summand goes to $0$ as $t$ increases, this would show that $T$ is norm-approximated by compact operators, and therefore compact.
\end{proof}

Now we can prove Theorem \ref{thmA:roe warped cone}, which we restate for convenience.

\begin{thm}[Theorem \ref{thmA:roe warped cone}]\label{thm:roe alg of warped cone}
 Under Assumption~\ref{ass:warped cones}, if the $\Gamma$-action is free, $\CO X$ has bounded geometry and $\CH_{\CO X}$ has the ONL property, then 
 \[
 \varPsi\colon\roestar{\CO X}\rtimesalg \Gamma\to\roestar{\CO_\Gamma X}\]
 descends to an isomorphism
 \begin{equation}\label{eq: thm6.8 part 1}
  \overline\varPsi\colon
  \frac{\roestar{\CO X}}{\CK\paren{L^2(\CO X)}\cap \roestar{\CO X}}
  \rtimesalg \Gamma
  \to \frac{\roestar{\CO_\Gamma X}}{\CK\paren{L^2(\CO X)}\cap \roestar{\CO_\Gamma X}}     
 \end{equation}
 and to an injective $\ast$-homomorphism with dense image:
 \begin{equation}\label{eq: thm6.8 part 2}
  \frac{\roecstar{\CO X}}{\CK\paren{L^2(\CO X)}}
  \rtimesalg \Gamma
  \to \frac{\roecstar{\CO_\Gamma X}}{\CK\paren{L^2(\CO X)}}.
 \end{equation}
 The quotient $\frac{\roecstar{\CO_\Gamma X}}{\CK\paren{L^2(\CO X)}}$ may thus be viewed as a completion of $\frac{\roecstar{\CO X}}{\CK\paren{L^2(\CO X)}}\rtimesalg \Gamma$.
\end{thm}
\begin{proof}
 We start by examining the Roe $*$-algebra $\roestar{\CO_\Gamma X}$.
 Corollary~\ref{cor:roe algebras are images of crossed products} implies that the map
 \[
 \roestar{\CO X}\rtimesalg \Gamma
  \to \frac{\roestar{\CO_\Gamma X}}{\CK\paren{L^2(\CO X)}\cap \roestar{\CO_\Gamma X}}
 \]
 is surjective. Since the compact operators are a two-sided ideal in $B(\CH_{\CO X})$, the algebraic crossed product $\frac{\roestar{\CO X}}{\CK\paren{L^2(\CO X)}\cap \roestar{\CO X}}\rtimesalg \Gamma$ is well-defined.
 Given $T\in\CK\paren{L^2(\CO X)}\cap \roestar{\CO X}$ and $\gamma\in\Gamma$, the image $\varPsi(T,\gamma)=T\pi(\gamma)$ is still a compact operator and it has finite propagation with respect to the warped metric. That is, $\varPsi(T,\gamma)\in\CK\paren{L^2(\CO X)}\cap \roestar{\CO_\Gamma X}$. This shows that $\varPsi$ does indeed descend to a surjective $\ast$-homomorphism. 
 
 It remains to prove that whenever $\varPsi$ maps a finite sum $\sum_{\gamma\in S}(T_\gamma,\gamma)$ into $\CK\paren{L^2(\CO X)}\cap \roestar{\CO_\Gamma X}$, then $T_\gamma$ must be compact for every $\gamma\in S$.
 Fix a finite sum $\sum_{\gamma\in S}(T_\gamma,\gamma)$ and let
 \[
    W\coloneqq\Psi\Bigparen{\sum_{\gamma\in S}(T_\gamma,\gamma)}.
 \]
 We show that if there is $\bar\gamma\in S$ such that the operator $T_{\bar \gamma}$ is not compact, then $W$ is also not compact. 
 
 Suppose that each $T_\gamma$ has propagation at most $r$ and let $R=R(r)$ be the constant given by the ONL property of $\CO X$.
 By Lemma~\ref{lem:locally cpt not cpt}, there is $\epsilon>0$ such that $\norm{T_{\bar \gamma}\chi_{[t,\infty)}}\geq \epsilon$ for every $t\geq 1$. We may then find $A_0\subseteq\CO X$ with $\diam_{\CO X}(A_0)\leq R$ and $\norm{\chi_{A_0}T_{\bar\gamma}}\geq c\norm{T_{\bar\gamma}}\geq c\epsilon$.

 Let $p_X$ and $p_\RR$ denote the coordinate projections from $\CO X$ to $X$ and $\halfline$, and let $t_1\coloneqq \sup(p_\RR(A_0))+1$. We may then apply the ONL property to the operator $T\chi_{[t_1,\infty)}$ to obtain $A_1\subset X \times [t_1,\infty)$ of diameter at most $R$ and $\norm{\chi_{A_1}T_{\bar\gamma}}\geq c\epsilon$.
 Repeating this procedure, we obtain a sequence of measurable subsets $A_n\subset \CO X$ satisfying:
 \begin{enumerate}[(i)]
     \item $\diam_{\CO X}(A_n)\leq R$;
     \item $p_\RR(A_n)\to\infty$;
     \item $A_n\cap A_m=\emptyset$ for every $n\neq m$;
     \item $\norm{\chi_{A_n}T_{\bar\gamma}}\geq c\epsilon$.
 \end{enumerate}

 Let $C_n\coloneqq \overline{N_{\CO X}(A_n; r)}$ be the closed $r$-neighbourhood in the open cone. Since  for each $\gamma \in S$ the operator $T_{\gamma}$ has propagation at most $r$, it follows that $\chi_{A_n}T_\gamma=\chi_{A_n}T_\gamma\chi_{C_n}$. 
 We obtain
 \begin{equation}\label{eq:final_thm_1}
    \chi_{A_n}W
    =\sum_{\gamma\in S}\chi_{A_n}T_\gamma\pi(\gamma)
    =\sum_{\gamma\in S}\chi_{A_n}T_\gamma\chi_{C_n}\pi(\gamma)
    =\sum_{\gamma\in S}\chi_{A_n}T_\gamma\pi(\gamma)\chi_{\gamma^{-1}\cdot C_n}.
 \end{equation}
 Now, condition (ii) implies that $\diam_X(p_X(C_n))\to 0$ as $n$ tends to infinity.
 Since the action is free and $X$ is compact, for every $n$ large enough the set $\bar\gamma\cdot p_X(C_n)$ is disjoint from $\gamma\cdot (p_X(C_n))$ for every $\gamma\in S\smallsetminus \{\bar\gamma\}$. A fortiori, $\bar\gamma\cdot C_n$ is then disjoint from $\gamma\cdot C_n$, so \eqref{eq:final_thm_1} implies that
 \begin{equation*}
     \chi_{A_n}W\chi_{\bar\gamma^{-1}\cdot C_n}
     = \chi_{A_n}T_{\bar \gamma}\pi(\gamma)\chi_{\bar\gamma^{-1}\cdot C_n}
     = \chi_{A_n}T_{\bar \gamma}\pi(\gamma).
 \end{equation*}

 Since $\pi(\bar\gamma)$ is a unitary, it then follows that
 \[
 \norm{\chi_{A_n}W}
 \geq
  \norm{\chi_{A_n}W\chi_{\bar\gamma^{-1}\cdot C_n}}
  \geq
  \norm{\chi_{A_n}T_{\bar \gamma}\pi(\bar \gamma)}
  =
  \norm{\chi_{A_n}T_{\bar \gamma}}
  \geq c\epsilon
 \]
 for every $n$ large enough. By (iii), the projections $\chi_{A_n}$ are orthogonal to one another, so this implies that $W$ is not compact.

 \

 Turning to the Roe $C^*$-algebras, we argue as above to see that 
 \[
 \roecstar{\CO X}\rtimesalg \Gamma
  \to \frac{\roecstar{\CO_\Gamma X}}{\CK\paren{L^2(\CO X)}}
 \]
 has dense image by Corollary~\ref{cor:roe algebras are images of crossed products},
 the algebraic crossed product $\frac{\roecstar{\CO X}}{\CK\paren{L^2(\CO X)}}\rtimesalg \Gamma$ is well-defined (observe that the compact operators always belong to the Roe algebra), and $\varPsi$ descends to a $\ast$-homomorphism.

 The verification that the induced homomorphism is injective follows along the same lines, with one additional small step:
 Once the finite sum $\sum_{\gamma\in S}(T_\gamma,\gamma)$ with $T_{\bar \gamma}$ non-compact is fixed, we need to choose approximating operators $T_\gamma'$ that have finite propagation and $\norm{T_\gamma-T_\gamma'}$ is much smaller than $c\epsilon$. It is then the case that $\norm{T'_\gamma\chi_{[t,\infty)}}\geq \epsilon/2$ for every $t\geq 1$.
 We may then apply the same argument of above to deduce that $\chi_{A_n}\varPsi\Bigparen{\sum_{\gamma\in S}(T_\gamma',\gamma)}$ has norm at least $c\epsilon/2$ for large enough $n$, and deduce that $\chi_{A_n}W$ has norm at least $c\epsilon/4$.
\end{proof}

\begin{cor}[{Corollary~\ref{corA:roe alg of warped cones-maximal}}]\label{cor:roe alg of warped cones-maximal}
Under Assumption~\ref{ass:warped cones}, if the $\Gamma$-action is free, $\CO X$ has bounded geometry and $\CH_{\CO X}$ has the ONL property, $\varPsi$ induces a $\ast$-isomorphism
    \[
      \frac{\roecstarmax{\CO X}}{\CK\paren{L^2(\CO X)}}
      \rtimesmax \Gamma
      \to \frac{\roecstarmax{\CO_\Gamma X}}{\CK\paren{L^2(\CO X)}}.
    \]
\end{cor}
\begin{proof}
    Consider the $\ast$-isomorphism
    \[
      \overline\varPsi\colon
      \frac{\roestar{\CO X}}{\CK\paren{L^2(\CO X)}\cap \roestar{\CO X}}
      \rtimesalg \Gamma
      \xrightarrow{\cong} \frac{\roestar{\CO_\Gamma X}}{\CK\paren{L^2(\CO X)}\cap \roestar{\CO_\Gamma X}}.
     \]
    We may argue as above or as in the proof of Corollary~\ref{cor:roe algebras are images of crossed products-maximal} to deduce that it induces a $*$\=/homomorphism
    \[
      \frac{\roecstarmax{\CO X}}{\CK\paren{L^2(\CO X)}}
      \rtimesmax \Gamma
      \to \frac{\roecstarmax{\CO_\Gamma X}}{\CK\paren{L^2(\CO X)}}.
    \]
    
    We will now use the universal property to construct an inverse for it.
    The $\ast$-homomorphism of $\roestar{\CO_\Gamma X}$ obtained composing $\overline\varPsi^{-1}$ with the embedding into the full crossed product extends to a $\ast$-homomorphism
    \[
      \roecstarmax{\CO_\Gamma X}\to
      \frac{\roecstarmax{\CO X}}{\CK\paren{L^2(\CO X)}}
      \rtimesmax \Gamma.
    \]
    This homomorphism sends compact operators to compact operators, so it descends to the quotient $\roecstarmax{\CO_\Gamma X}/\CK(\CO X)$. By the uniqueness part of the universal property for maximal norms, it follows that the map thus constructed is the required inverse.
\end{proof}

\begin{rmk}
    Equation \eqref{eq: thm6.8 part 1} in Theorem~\ref{thm:roe alg of warped cone} can be proved without the assumption that $\CH_{\CO X}$ has the ONL property. Namely, in the second part of the proof one can use the bounded geometry assumption on $\CO X$ to partition $\CO X$ and directly show that if $T_{\bar\gamma}$ is not compact, then there are sets $A_n$ satisfying (i)--(iii) and a version of (iv) in which the uniform constant $c$ is replaced by some other constant $\delta>0$ possibly depending on $T_{\bar\gamma}$. This suffices to complete the proof of \eqref{eq: thm6.8 part 1}. On the other hand, the argument we outlined for  \eqref{eq: thm6.8 part 2} does make use of the uniform constant $c$ when choosing the finite propagation operator $T'_{\bar \gamma}$ approximating $T_{\bar \gamma}$, so it does rely on the ONL property.
\end{rmk}

\end{document}